\DeclareMathAlphabet{\itbf}{OML}{cmm}{b}{it}
\def\by{{{\itbf y}}}
\def\bx{{{\itbf x}}}
\def\bz{{{\itbf z}}}
\def\etab{{\boldsymbol{\eta}}_{\rm noise}}
\def\bpsi{{\boldsymbol{\psi}}}
\newcommand{\bU}{\mathbf{U}}
\newcommand{\RR}{\mathbb{R}}
\newcommand{\K}{{\kappa}}
\newcommand{\bu}{\mathbf{u}}
\newcommand{\bv}{\mathbf{v}}
\newcommand{\bw}{\mathbf{w}}
\newcommand{\ds}{\displaystyle}
\newtheorem{thm}{Theorem}[section]
\newtheorem{cor}[thm]{Corollary}
\newtheorem{lem}[thm]{Lemma}
\newtheorem{defn}[thm]{Definition}
\numberwithin{equation}{section}
\newcommand{\pathfigures}{Figures/}
\begin{document}

\title{Stability and Resolution Analysis of Topological Derivative Based Localization of Small Electromagnetic
Inclusions}
\author{
Abdul Wahab
\thanks{\footnotesize Department of Mathematics, COMSATS Institute of Information Technology, 47040, Wah Cantt., Pakistan (wahab@ciitwah.edu.pk).}
}
\maketitle

\begin{abstract}
The aim of this article is to elaborate and rigorously analyze a topological derivative based imaging framework for locating an electromagnetic inclusion of diminishing size from boundary measurements of the tangential component of scattered magnetic field at a fixed frequency.  The inverse problem of inclusion detection is formulated as an optimization problem in terms of a filtered discrepancy functional and the topological derivative based imaging functional obtained therefrom. The sensitivity and resolution analysis of the imaging functional is rigorously performed. It is substantiated that the Rayleigh resolution limit is achieved. Further, the stability of the reconstruction with respect to measurement and medium noises is investigated and the signal-to-noise ratio is evaluated in terms of the imaginary part of free space fundamental magnetic solution.
\end{abstract}

\noindent {\footnotesize {\bf AMS subject classifications 2000.} Primary, 35L05, 35R30, 74B05; Secondary, 47A52, 65J20}

\noindent {\footnotesize {\bf Key words.} Electromagnetic imaging; Topological derivative;  Localization; Resolution analysis; Stability analysis; Medium noise; Measurement noise.}

\section{Introduction} 

The concept of derivatives with respect to geometry or topology has played a significant role in industrial and engineering  optimization problems, especially for designing optimal shapes of various products subject to industrial constraints \cite{SZ}. Soon after its emergence \cite{EKS}, the idea was embraced for  imaging of diametrically small anomalies \cite{CGGM}  and inverse scattering problems; see, for example, \cite{Princeton, Multi, BG, DG, DGE, F, HL} and articles cited therein. 

In topological derivative based imaging framework, a trial inclusion is created in the (inclusion-free) background medium  at a search point, furnishing fitted data. Then a misfit functional is constructed using measurements and the fitted data. The search points that minimize the discrepancy between measured data and the fitted data are then sought. In order to find its minima, the misfit is expanded using the asymptotic expansions due to the perturbation of the wave-field in the presence of an inclusion versus its characteristic size. The leading order term in the expansion is then referred to as the topological derivative of the misfit, which synthesizes its
sensitivity relative to the insertion of an inclusion at a given search location. Its maximum, which corresponds to the point at which the insertion of the inclusion maximally decreases the misfit is therefore a potential candidate for the location of the true inclusion.

The topological derivatives have been used heuristically in the context of imaging and non-destructive testing lacking rigorous mathematical justifications, unlike in shape optimization wherein they attracted enormous interest from mathematical as well as numerical view point. For the first time, the stability and resolution analysis of the topological derivative based imaging of small inclusions for the anti-plane elasticity was performed by  \citet{AGJK}. Therein, it is elucidated that in order to get a stable and guaranteed localization with a good resolution, the use of a filtered discrepancy is indispensable whereas the filter needs to be defined in terms of a \emph{Neumann-Poincar\'e} type boundary integral operator. The filtered topological derivative functional is proved to achieve Raleigh resolution limit. Moreover, it is elucidated that this topological sensitivity framework is stable and robust with respect to medium and measurement noises, and with limited view measurements. It performs far batter than classical imaging frameworks including back-propagation technique, MUSIC-type imaging and Kirchhoff migration in  worse imaging conditions. 

The full elasticity  case of topological sensitivity framework in a linear isotropic regime was rigorously explained by \citet{TDelastic}. The study surprisingly indicates that the classical framework does not guarantee a localization of the inclusion even with a filtered discrepancy functional. Moreover, even if it is somehow able to locate the inclusion, the resolution of the functional degenerates thanks to nonlinear coupling between shear and pressure components at the boundary. In order to counter the coupling artifacts and to have a guaranteed localization of small inclusions, a modified imaging framework was proposed based on a weighted Helmholtz decomposition \cite{ABGW} applied to the initial guess furnished by filtered topological derivative functional. The modified framework is then proved to be stable with respect to medium and measurement noises. Furthermore, it achieves the Rayleigh resolution limit.  

The aim in this article is to study a topological derivative based imaging framework for detecting diametrically small electromagnetic inclusions from  single and multiple boundary measurements of the tangential component of scattered magnetic field over a fixed frequency. It is assumed that the magnetic field satisfies full three dimensional Maxwell equations and the inclusion is penetrable however homogeneous with electromagnetic parameters different from that of the background medium.  The work is focused on the analysis of the detection capabilities of a filtered topological derivative based imaging functional wherein the filter is defined in terms of a boundary integral operator. Precisely, the aim of the article is three-fold: First to introduce a filtered topological derivative based imaging framework,  then to perform sensitivity and resolution analysis of the algorithm and finally to investigate its stability with respect to measurement and medium noises. 
The potential applications envisioned by the imaging of electromagnetic inclusions of diminishing size can be found in non-destructive testing of small material impurities, medical diagnosis and therapeutic protocols, especially for detecting and curing cancers of vanishing size and for brain imaging. It is worthwhile precising that the problem of detecting small electromagnetic inclusions has been previously studied by using MUSIC-type algorithms \cite{AILP}, time reversal and phase conjugation techniques \cite{GWL, WARS, WAHR}, reverse time migration \cite{CCH}, topological derivative based imaging \cite{MPS}, and asymptotic expansion techniques \cite{AKInv, AK}. For the imaging of thin electromagnetic inclusions and cracks in a two dimensional setting, we refer the reader to \cite{park, park2} for instance. We will restrict ourselves only to the detection of the inclusion and will not discuss its morphology (shape, size and material properties) in this paper. In this regard, we refer for instance to the recent results by \citet{AM} and \citet{Bao}.

The rest of this article is organized in the following manner. In Section \ref{form}, we collect some notation and important results on electromagnetic Green's functions, boundary layer potentials and polarization tensors. The inverse problem under taken in this study is then mathematically formulated. In Section \ref{framework}, a filtered quadratic misfit is defined and its topological derivative is evaluated using asymptotic expansion of the scattered magnetic field with respect to the characteristic size of the inclusion. The sensitivity and resolution  analysis of the imaging functional is performed in Section \ref{sensitivity}. Section \ref{measNoise} is dedicated to perform stability analysis of the topological derivative based imaging with respect to measurement noise whereas Section \ref{medNoise} deals with its stability with respect to medium noise.  Finally, a summary of the results obtained herein is provided in Section \ref{conc}.

\section{Mathematical formulation}\label{form}

In this section, we introduce some notation and collect some basic results for electromagnetic Green's functions and layer potentials indispensable for this study. We also mathematically formulate the inverse problem undertaken. 

\subsection{Notation}

Let $X\subset\RR^3$ be a smooth domain with simply connected boundary ${\partial X}$ and $\nu$ denote the outward unit normal vector on ${\partial X}$.  
We define the surface divergence  of a complex valued vector field $\bu\in\mathcal{C}^k({\partial X})$ for $k\in\mathbb{N}$  by 
\begin{eqnarray}
{\rm div}_{\partial X} \bu = {\rm div}\widetilde{\bu}|_{\partial X} -\left(\big[\nabla\widetilde{\bu}|_{\partial X}\big]\nu\right)\cdot \nu,
\end{eqnarray} 
where $\widetilde{\bu}$ is a smooth extension of $\bu$ to the whole space $\RR^3$.

Let $H^s(X)$ and $H^s_{\rm loc}(\overline{X})$ be the usual Sobolev spaces for $s>0$. By $H^s({\partial X})$ the trace space of $H^{s+1/2}(X)$ and by $H^{-s}$ the $L^2-$dual space of $H^s({\partial X})$ are denoted. Moreover, $TH^s({\partial X})$ defines the tangential trace space of $H^{s+1/2}(X)$ under the action of the operator $\gamma_\nu[\bu]=\nu\times\bu|_{\partial X}$ and its $L^2-$ dual is denoted by $TH^{-s}({\partial X})$.  We also define the Hilbert space
\begin{eqnarray}
TH_{\rm div}^s({\partial X}):=\left\{\bu\in \left(TH^s ({\partial X})\right)^3\quad \big|\quad {\rm div}_{\partial X}\bu\in H^s({\partial X})\right\}.
\end{eqnarray}
Similarly, the dual space of $TH_{\rm div}^s({\partial X})$ is denoted by $TH_{\rm div}^{-s}({\partial X})$. Finally, we define the spaces $H(X;{\rm{\bf curl}})$ and $H_{\rm loc}(X; {\rm{\bf curl}})$  by 
\begin{eqnarray}
H(X;{\rm{\bf curl}})&:=&\left\{\bu\in\left(H^s(X)\right)^3\quad\Big|\quad {\rm{\bf curl}}\,\bu\in L^2(X)\right\},
\\
H_{\rm loc}(X;{\rm{\bf curl}})&:=&\left\{\bu\in\left(H^s_{\rm loc}(X)\right)^3\quad\Big|\quad {\rm{\bf curl}}\,\bu\in L^2_{\rm loc}(X)\right\}.
\end{eqnarray}
Refer to \cite{nedelec, BC, BHPS} and references therein for further details. 

For matrices $\mathbf{A}=(a_{ij})_{i,j=1}^3$ and $\mathbf{B}=(b_{ij})_{i,j=1}^3$, the contraction operator `$:$' is defined by $\mathbf{A}:\mathbf{B} := \sum_{i,j=1}^3 a_{ij}b_{ij},$ and the \emph{Frobenius norm} $\|\cdot\|$ of   $\mathbf{A}$ is defined by  $\|\mathbf{A}\|:= \sqrt{\mathbf{A}:\mathbf{A}}.$

\subsection{Problem formulation}

Let $D=\rho B_D+ \bz_D$ be a small three-dimensional bounded inclusion with a smooth and simply connected boundary $\partial D$,  permittivity $\epsilon_1>0$ and permeability $\mu_1>0$, where $B_D$ is a regular enough bounded domain in $\RR^3$ representing the volume of the inclusion, $\bz_D$ is the vector position of its center and $\rho>0$ is the scale factor. The inclusion $D$ is compactly supported in the bounded open background domain $\Omega\subset\RR^3$ with a smooth and simply connected boundary $\partial\Omega$. Let $\epsilon_0>0$ and $\mu_0>0$ be the permittivity and  permeability of  $\Omega$ without inclusion $D$, letting $\K:=\omega\sqrt{\epsilon_0\mu_0}$ and $c=1/\sqrt{\epsilon_0\mu_0}$ to be the background wave-number and  speed of light in the medium, respectively, where $\omega>0$ is the frequency pulsation. We define the piecewise constant functions $\mu_\rho$ and $\epsilon_\rho$ by 
\begin{eqnarray}
\mu_\rho(\bx):=
\begin{cases} 
\mu_1, & \bx\in D, 
\\
\mu_0, & \bx\in\RR^3\setminus \overline{D},
\end{cases}
\quad\text{and}\quad 
\epsilon_\rho(\bx)
:=
\begin{cases} 
\epsilon_1, & \bx\in D, 
\\
\epsilon_0, & \bx\in \RR^3\setminus \overline{D}.
\end{cases}
\end{eqnarray}
Furthermore, let $D$ be of diminishing characteristic size and be separated apart from $\partial\Omega$, that is, there exists a constant $d_0>0$ such that
\begin{eqnarray}
\inf_{\bx\in D}{\rm dist}(\bx,\partial\Omega)\geq d_0>0
\quad\text{and}\quad 
\rho\K\ll 1.\label{assumption}
\end{eqnarray}

Let $\mathbf{H}_\rho\in H_{\rm loc}(\rm{\bf{curl}}, \Omega)$ denote the time-harmonic magnetic field in $\Omega$ in the presence of $D$, that is, the solution to 
\begin{eqnarray}
\begin{cases}\label{Hrho}
\ds\nabla\times (\epsilon_0^{-1}\nabla\times \mathbf{H}_\rho)-\omega^2\mu_0\mathbf{H}_\rho= \mathbf{0}, & \Omega\setminus\overline D,
\\
\ds\ds\nabla\times (\epsilon_1^{-1}\nabla\times \mathbf{H}_\rho)-\omega^2\mu_1\mathbf{H}_\rho= \mathbf{0}, & D,
\\
\ds\left(\mathbf{H}_\rho\times\nu\right)^+ - \left(\mathbf{H}_\rho\times\nu \right)^-=\mathbf{0}, 
&\partial D,
\\
\ds \epsilon_0^{-1}(\nabla\times\mathbf{H}_\rho)^+\times\nu-\ds \epsilon_1^{-1}(\nabla\times\mathbf{H}_\rho)^-\times\nu=\mathbf{0},
 &\partial D,
\\
\ds \mu_0 \mathbf{H}_\rho^+\cdot\nu
-\mu_1\mathbf{H}_\rho^-\cdot\nu=\mathbf{0}, &\partial D,
\\
\epsilon_0^{-1}(\nabla\times\mathbf{H}_\rho)\times \nu = \mathbf{h}, & \partial \Omega.
\end{cases}
\end{eqnarray}

We also define the background magnetic field (in the absence of any inclusion inside $\Omega$) $\mathbf{H}_0\in H_{\rm loc}(\rm{\bf{curl}}, \Omega)$ as the solution to 
\begin{eqnarray}\label{H0}
\begin{cases}
\ds\nabla\times\nabla\times \mathbf{H}_0-\K^2\mathbf{H}_0= \mathbf{0}, & \Omega,
\\
\epsilon_0^{-1}(\nabla\times\mathbf{H}_0)\times \nu = \mathbf{h}, & \partial \Omega.
\end{cases}
\end{eqnarray}

In this paper, we are interested in the following problem.
\subsubsection*{Inverse problem} 
\textit{Given the measurements $\mathbf{H}_\rho\times \nu$ for all $\bx\in\partial \Omega$, find the position $\bz_D$ of the inclusion $D$ using a filtered topological derivative based imaging framework.}

A similar problem has been studied by \citet{MPS} using topological derivative based sensitivity framework by invoking an adjoint field. The aim here is to design and debate the performance of  topological derivative based detection framework applied to a filtered quadratic misfit. Moreover, the approach adopted herein is based on the asymptotic expansion of the scattered magnetic field with respect to the size of the inclusion.

\subsection{Electromagnetic Green's functions} 

Consider the outgoing fundamental solution $g$ to Helmholtz operator $-(\Delta+\K^2)$ in $\RR^3$ given by 
\begin{eqnarray}
g(\bx,\by):=\frac{e^{i\K|\bx-\by|}}{4\pi|\bx-\by|}, \qquad \bx\neq\by,\quad  \bx,\by\in\RR^3,
\end{eqnarray}
and introduce the dyadic Green's function $\mathbf{\Gamma}$ by 
\begin{eqnarray}
\mathbf{\Gamma}(\bx,\by):= -\epsilon_0\left(\mathbf{I}_3+\frac{1}{\K^2}\nabla\nabla^T\right) g(\bx,\by),
\end{eqnarray}
where $\mathbf{I}_3$ is $3\times 3$ identity matrix. The function $\mathbf{\Gamma}(\bx,\by)$ is the solution to 
\begin{eqnarray}
\ds\nabla_\bx\times \nabla_\bx \times \mathbf{\Gamma}(\bx,\by)-\K^2 \mathbf{\Gamma}(\bx,\by)= -\epsilon_0\delta_\by(\bx)\mathbf{I}_3, &\bx,\by\in\RR^3,
\end{eqnarray}
subject to \emph{Silver-M\"uller} condition
\begin{eqnarray}
\ds\lim_{|\bx-\by|\to \infty} |\bx-\by|\left[\nabla_\bx\times\mathbf{\Gamma}(\bx,\by)\times\frac{\bx-\by}{|\bx-\by|}-i\K\mathbf{\Gamma}(\bx,\by)\right]=0.\label{silver}
\end{eqnarray}
Here $\delta_{\by}(\cdot)=\delta_0(\cdot-\by)$ is the Dirac mass at $\by$ and the operator $\nabla\times$ acts on matrices column-wise, that is, $$
\nabla\times \left[\mathbf{\Gamma}\mathbf{p}\right]:= \nabla\times \left[\mathbf{\Gamma}\right]\mathbf{p}, \quad \text{for all constant vectors}\quad \mathbf{p}\in\RR^3.
$$
It is worthwhile precising that $\mathbf{\Gamma}$ possesses the following reciprocity properties in isotropic dielectric materials; see \cite[Sect. 2.2]{AILP},
\begin{eqnarray}
\mathbf{\Gamma}(\bx,\by)=  \mathbf{\Gamma}(\by,\bx) 
\quad\text{and}\quad
\nabla_\bx\times \mathbf{\Gamma}(\bx,\by)=\left[\nabla_\by\times \mathbf{\Gamma}(\by,\bx)\right]^T.\label{Pro1}
\end{eqnarray}

The following electromagnetic Helmholtz-Kirchhoff identities are the key ingredients to elucidate the localization capabilities of the imaging functional proposed in the next section. 

\begin{lem}[See {\cite[Lemma 3.2]{CCH}} ]\label{HKI-lem1}
Let $\mathbf{B}(0,r)$ be an open ball in $\RR^3$ with large radius $r\to\infty$ and boundary $\partial\mathbf{B}(0,r)$. Then, for all $\bx,\by\in\mathbf{B}(0,r)$, we have
\begin{align}\label{HK1}
\int_{\partial\mathbf{B}(0,r)}\left(\overline{\mathbf{\Gamma}(\bx,\bz)}\right)^T\mathbf{\Gamma}(\bz,\by)d\sigma(\bz) = -\frac{\epsilon_0}{\K}\Im m\big\{\mathbf{\Gamma}(\bx,\by)\big\}+\mathbf{Q}(\bx,\by),
\end{align}
where $\mathbf{Q}=\left(q_{ij}\right)_{i,j=1}^3$ is such that 
$|q_{ij}(\bx,\by)|+|\nabla_\bx q_{ij}(\bx,\by)|\leq Cr^{-1}$ uniformly for all $\bx,\by\in\mathbf{B}(0,r)$. Here and throughout this paper $d\sigma$ denotes the surface element. 
\end{lem}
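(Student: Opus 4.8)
The plan is to reduce the matrix identity \eqnref{HK1} to a family of scalar identities by testing against arbitrary constant vectors $\bp,\q\in\RR^3$, and then to exploit the vector (dyadic) analogue of Green's second identity together with the Silver--M\"uller condition \eqnref{silver}. Throughout I would use two structural facts about $\mathbf{\Gamma}$: the reciprocity relations \eqnref{Pro1}, and the matrix symmetry $\mathbf{\Gamma}(\bx,\by)^T=\mathbf{\Gamma}(\bx,\by)$, which is immediate from the symmetry of the operator $\mathbf{I}_3+\K^{-2}\nabla\nabla^T$ and of $g$. Together these let me identify $(\overline{\mathbf{\Gamma}(\bx,\bz)})^T$ with $\overline{\mathbf{\Gamma}(\bz,\bx)}$, so that the integrand in \eqnref{HK1} is exactly $\bp^T\,\overline{\mathbf{\Gamma}(\bz,\bx)}\,\mathbf{\Gamma}(\bz,\by)\,\q=\bu(\bz)\cdot\bv(\bz)$, where I set $\bu(\bz):=\overline{\mathbf{\Gamma}(\bz,\bx)}\,\bp$ and $\bv(\bz):=\mathbf{\Gamma}(\bz,\by)\,\q$. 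A componentwise reduction to the scalar Helmholtz--Kirchhoff identity for $g$ is also conceivable, but the second-derivative terms in $\nabla\nabla^T g$ make the dyadic route cleaner.

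First I would record the equations satisfied by $\bu$ and $\bv$ on $\mathbf{B}(0,r)$. Applying the defining PDE of $\mathbf{\Gamma}$ column-wise gives $\curl\curl\bv-\K^2\bv=-\epsilon_0\delta_\by\q$, while conjugating it (recall $\K\in\RR$) gives $\curl\curl\bu-\K^2\bu=-\epsilon_0\delta_\bx\bp$. Next I would apply the vector Green's second identity
\[
\int_{\mathbf{B}(0,r)}\big(\bv\cdot\curl\curl\bu-\bu\cdot\curl\curl\bv\big)\,d\bz=\int_{\partial\mathbf{B}(0,r)}\big(\bu\times\curl\bv-\bv\times\curl\bu\big)\cdot\bn\,d\sigma(\bz),
\]
where $\bn$ is the outward unit normal. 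On the left the Helmholtz terms cancel and the Dirac masses collapse the volume integral to $-\epsilon_0\bv(\bx)\cdot\bp+\epsilon_0\bu(\by)\cdot\q$; using symmetry and reciprocity this equals $-\epsilon_0\bp^T(\mathbf{\Gamma}-\overline{\mathbf{\Gamma}})(\bx,\by)\q=-2i\epsilon_0\,\bp^T\Im m\{\mathbf{\Gamma}(\bx,\by)\}\q$, which is $2i\K$ times the desired right-hand side of \eqnref{HK1}.

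For the boundary term I would invoke the Silver--M\"uller condition \eqnref{silver}. On $\partial\mathbf{B}(0,r)$ one has $\widehat{\bz-\by}\to\bn$ as $r\to\infty$, so \eqnref{silver} yields $\curl\bv\times\bn=i\K\bv+o(r^{-1})$, and conjugating the analogous relation for $\mathbf{\Gamma}(\bz,\bx)$ gives $\curl\bu\times\bn=-i\K\bu+o(r^{-1})$. Writing the boundary integrand via the scalar triple product $(\bu\times\curl\bv)\cdot\bn=\bu\cdot(\curl\bv\times\bn)$, the leading contributions combine to $2i\K\,\bu\cdot\bv$, so the boundary integral equals $2i\K\,\bp^T\big[\int_{\partial\mathbf{B}(0,r)}(\overline{\mathbf{\Gamma}(\bx,\bz)})^T\mathbf{\Gamma}(\bz,\by)\,d\sigma\big]\q$ up to a remainder. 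Equating the two sides, dividing by $2i\K$, and noting that $\bp,\q$ are arbitrary then delivers the matrix identity \eqnref{HK1}.

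The main obstacle, and the only genuinely delicate point, is the uniform control of the remainder $\mathbf{Q}$ at order $r^{-1}$, together with the bound on $\nabla_\bx q_{ij}$. The remainder is $\int_{\partial\mathbf{B}(0,r)}\big[\bu\cdot(\curl\bv\times\bn-i\K\bv)-\bv\cdot(\curl\bu\times\bn+i\K\bu)\big]\,d\sigma$. Here I would not merely use the qualitative $o(r^{-1})$ in \eqnref{silver}, but the explicit far-field asymptotics of $\mathbf{\Gamma}$ and $\curl\mathbf{\Gamma}$ obtained by differentiating $g(\bx,\by)=e^{i\K|\bx-\by|}/(4\pi|\bx-\by|)$; in particular the non-radiating near-field pieces carried by $\nabla\nabla^T g$ decay like $r^{-2}$ and $r^{-3}$ on the sphere. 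Since $\bu,\bv=O(r^{-1})$ and the radiation defect is $O(r^{-2})$, each integrand is $O(r^{-3})$ while the surface measure contributes $O(r^{2})$, yielding $\mathbf{Q}=O(r^{-1})$. The gradient estimate follows by differentiating the whole construction in the parameter $\bx$: $\nabla_\bx\overline{\mathbf{\Gamma}(\bx,\bz)}$ retains the same $r^{-1}$ far-field size on $\partial\mathbf{B}(0,r)$ with an $O(r^{-2})$ defect, so the identical bookkeeping gives $|\nabla_\bx q_{ij}|\leq Cr^{-1}$. Care must be taken that all expansions are uniform in $\bx,\by$ ranging over compact subsets, which is exactly where the hypothesis $\bx,\by\in\mathbf{B}(0,r)$ with $r\to\infty$ enters.
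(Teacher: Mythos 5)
Your proposal is correct and follows essentially the same route as the paper: the paper cites Lemma \ref{HKI-lem1} from \cite[Lemma 3.2]{CCH} and reproduces the argument in Appendix \ref{Append.HK2} for the companion Lemma \ref{HKI-lem2}, starting from exactly your vector Green's identity (stated there as \cite[Lemma 3.1]{CCH}, identity \eqref{e0}) and then substituting the Silver--M\"uller asymptotics \eqref{e1}--\eqref{e4} to produce the $O(r^{-1})$ remainder and its $\bx$-gradient bound. Your only additions --- the explicit reduction via constant test vectors $\bp,\q$ and the collapse of the Dirac masses --- are the content of that cited identity, so nothing essential differs.
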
 

\begin{lem}\label{HKI-lem2}
Let $\mathbf{B}(0,r)$ be an open ball in $\RR^3$ with large radius $r\to\infty$ and boundary $\partial\mathbf{B}(0,r)$. Then, for all $\bx,\by\in\mathbf{B}(0,r)$, we have
\begin{equation}\label{HK2}
\int_{\partial\mathbf{B}(0,r)}\Big(\overline{\mathbf{\Gamma}(\bx,\bz)}\times\nu(\bz)\Big)^T\Big(\mathbf{\Gamma}(\bz,\by)\times\nu(\bz)\Big)d\sigma(\bz) = -\frac{\epsilon_0}{\K}\Im m\big\{\mathbf{\Gamma}(\bx,\by)\big\}+\widetilde{\mathbf{Q}}(\bx,\by),
\end{equation}
where $\widetilde{\mathbf{Q}}=\left(\widetilde{q}_{ij}\right)_{i,j=1}^3$ is such that 
$|\widetilde{q}_{ij}(\bx,\by)|+|\nabla_\bx \widetilde{q}_{ij}(\bx,\by)|\leq \widetilde{C}r^{-1}$ uniformly for all $\bx,\by\in\mathbf{B}(0,r)$.
\end{lem}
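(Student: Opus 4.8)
The plan is to reduce the identity \eqref{HK2} to the already-established Helmholtz--Kirchhoff identity \eqref{HK1} by an algebraic manipulation of the tangential integrand, and then to dispose of the leftover term using the far-field behaviour of the dyadic Green's function. The algebraic input is the Lagrange (Binet--Cauchy) identity $(\mathbf{a}\times\nu)\cdot(\mathbf{b}\times\nu)=(\mathbf{a}\cdot\mathbf{b})(\nu\cdot\nu)-(\mathbf{a}\cdot\nu)(\mathbf{b}\cdot\nu)$, valid for the \emph{bilinear} dot product and hence applicable to complex vectors $\mathbf{a},\mathbf{b}$ together with the real unit normal $\nu$. Since $\times\nu$ acts column-wise on the matrices, applying this identity to the $i$-th column of $\overline{\mathbf{\Gamma}(\bx,\bz)}$ and the $j$-th column of $\mathbf{\Gamma}(\bz,\by)$ gives, entrywise,
\[
\Big(\overline{\mathbf{\Gamma}(\bx,\bz)}\times\nu\Big)^{T}\Big(\mathbf{\Gamma}(\bz,\by)\times\nu\Big)=\overline{\mathbf{\Gamma}(\bx,\bz)}^{T}\mathbf{\Gamma}(\bz,\by)-\overline{\mathbf{\Gamma}(\bx,\bz)}^{T}\,\nu\nu^{T}\,\mathbf{\Gamma}(\bz,\by),
\]
where $\nu=\nu(\bz)$ and the last term is the rank-one contraction $\big(\overline{\mathbf{\Gamma}(\bx,\bz)}^{T}\nu\big)\big(\nu^{T}\mathbf{\Gamma}(\bz,\by)\big)$.

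Integrating this identity over $\partial\mathbf{B}(0,r)$, the first term on the right is precisely the left-hand side of \eqref{HK1} and so equals $-\tfrac{\epsilon_0}{\K}\Im m\{\mathbf{\Gamma}(\bx,\by)\}+\mathbf{Q}(\bx,\by)$ by Lemma \ref{HKI-lem1}. It therefore suffices to prove that the correction
\[
\mathbf{R}(\bx,\by):=\int_{\partial\mathbf{B}(0,r)}\Big(\overline{\mathbf{\Gamma}(\bx,\bz)}^{T}\nu(\bz)\Big)\Big(\nu(\bz)^{T}\mathbf{\Gamma}(\bz,\by)\Big)\,d\sigma(\bz)
\]
is $O(r^{-1})$ entrywise together with its $\bx$-gradient; one then simply sets $\widetilde{\mathbf{Q}}:=\mathbf{Q}-\mathbf{R}$ and reads off the stated bound.

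The heart of the matter is that on $\partial\mathbf{B}(0,r)$ we have $\nu(\bz)=\bz/r$, so the two contracted factors in $\mathbf{R}$ are exactly the \emph{radial} components of the Green's dyad, and these decay one order faster than the dyad itself. Indeed, writing $R=|\bz-\by|$ and $\widehat{R}=(\bz-\by)/R$, the explicit form $\mathbf{\Gamma}(\bz,\by)=-\epsilon_0 g\big[(1+\tfrac{i}{\K R}-\tfrac{1}{\K^{2}R^{2}})\mathbf{I}_3+(-1-\tfrac{3i}{\K R}+\tfrac{3}{\K^{2}R^{2}})\widehat{R}\widehat{R}^{T}\big]$ shows that, upon left-contraction with $\nu$, the leading $O(r^{-1})$ contributions of the $\mathbf{I}_3$ and $\widehat{R}\widehat{R}^{T}$ parts cancel (because $\widehat{R}=\nu+O(r^{-1})$ and $g=O(r^{-1})$), leaving $\nu^{T}\mathbf{\Gamma}(\bz,\by)=O(r^{-2})$ uniformly. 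This is exactly the transversality of the far field encoded in the Silver--M\"uller condition \eqref{silver}. By the reciprocity and matrix-symmetry of $\mathbf{\Gamma}$ in \eqref{Pro1}, the identical estimate yields $\overline{\mathbf{\Gamma}(\bx,\bz)}^{T}\nu=O(r^{-2})$. Hence the integrand of $\mathbf{R}$ is $O(r^{-4})$, and multiplying by the surface measure $O(r^{2})$ gives $\mathbf{R}=O(r^{-2})$, comfortably within the required $O(r^{-1})$.

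The main obstacle is to establish these far-field estimates \emph{uniformly} in $\bx,\by\in\mathbf{B}(0,r)$ and, in particular, to propagate them to the $\bx$-derivative so as to obtain $|\nabla_\bx\widetilde{q}_{ij}|\le\widetilde{C} r^{-1}$. For the gradient I would observe that $\nabla_\bx$ acts on $\mathbf{\Gamma}(\bx,\bz)=\mathbf{\Gamma}(\bz,\bx)$ by differentiating $g(\bz,\bx)$ in its bounded near argument, which leaves the power of $r$ unchanged, while the structural cancellation producing the extra factor of $r^{-1}$ survives differentiation; since $\nu^{T}\mathbf{\Gamma}(\bz,\by)$ is $\bx$-independent, this gives $\nabla_\bx\mathbf{R}=O(r^{-3})$. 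The only genuinely delicate point is the uniform remainder control in the far-field expansion when $\bx$ or $\by$ approaches $\partial\mathbf{B}(0,r)$; this is handled exactly as in the proof of Lemma \ref{HKI-lem1} in \cite{CCH}, and poses no new difficulty beyond careful bookkeeping.
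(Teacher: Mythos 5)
Your proof is correct, but it takes a genuinely different route from the paper's. The paper does not deduce Lemma \ref{HKI-lem2} from Lemma \ref{HKI-lem1} at all: it returns to the Green-type identity of \cite[Lemma 3.1]{CCH}, which expresses $2i\epsilon_0\,\mathbf{p}\cdot\Im m\{\mathbf{\Gamma}(\bx,\by)\}\mathbf{q}$ as a boundary integral of cross terms between $\mathbf{\Gamma}$ and $\nabla_\bz\times\mathbf{\Gamma}$, and then uses the Silver--M\"uller estimates $\nabla_\bz\times\mathbf{\Gamma}\,\mathbf{p}+i\K\,\mathbf{\Gamma}\,\mathbf{p}\times\nu=O(r^{-2})$ (together with their $\bx$-derivatives) to replace each curl by $-i\K(\cdot)\times\nu$, which directly produces the integral of $(\overline{\mathbf{\Gamma}}\times\nu)^T(\mathbf{\Gamma}\times\nu)$. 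You instead take the already-established identity \eqref{HK1} as input and observe, via the Binet--Cauchy/Lagrange identity, that the tangential integrand differs from the full one precisely by the rank-one radial term $(\overline{\mathbf{\Gamma}(\bx,\bz)}^T\nu)(\nu^T\mathbf{\Gamma}(\bz,\by))$, which you kill using the transversality of the far field. Both arguments ultimately rest on the same structural fact (the radiating dyad is asymptotically transverse), but your decomposition makes the relation between the two Helmholtz--Kirchhoff identities explicit and quantifies the discrepancy as $O(r^{-2})$, one order better than needed, whereas the paper's route is more economical in that the gradient bound $|\nabla_\bx\widetilde q_{ij}|\le\widetilde C r^{-1}$ falls out immediately from the differentiated Silver--M\"uller estimates. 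Two minor remarks: your count $\nabla_\bx\mathbf{R}=O(r^{-3})$ should be $O(r^{-2})$ (differentiation in the near argument does not gain a power of $r$, it merely preserves the order of $\overline{\mathbf{\Gamma}(\bx,\bz)}^T\nu=O(r^{-2})$), which is still well within the required bound; and the uniformity issue you flag for $\bx,\by$ near $\partial\mathbf{B}(0,r)$ is real, but it is present in the statement and proof of Lemma \ref{HKI-lem1} as well, so your proof is no worse off than the paper's on this point.
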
 

The identity \eqref{HK2} can be proved trivially by mimicking the proof of Lemma \ref{HKI-lem1} provided in \cite[Lemma 3.2]{CCH}. For the sake of completeness, we briefly sketch the proof in Appendix \ref{Append.HK2}. 

\subsection{Layer potentials}

We define the scalar single layer potential $S^\K$ associated with domain $X$ of a scalar field $\phi\in H^{s-1/2}({\partial X})$ by
\begin{equation}
S^{\K}[\phi](\bx):=\int_{{\partial X}} g(\bx,\by)\phi(\by)d\sigma(\by),\qquad \bx\in\RR^3\setminus {\partial X}.
\end{equation}
The vector single layer potential is defined likewise and still represented by $S^\K$ by abuse of notation. We have the following result from \cite[Lemma 2.3]{MS}.
\begin{lem}\label{LemDivS}
For all $\mathbf{j}\in TH_{\rm div}^{-1/2}({\partial X})$
\begin{align}
&\nabla\cdot S^{\K}[\,\mathbf{j}\,](\bx)= S^{\K}[{\rm div}_{{\partial X}}\mathbf{j}](\bx), \quad \bx\in\RR^3\setminus{\partial X}.
\end{align}
\end{lem}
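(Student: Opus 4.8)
The plan is to establish the identity first for smooth tangential densities by a direct computation, and then to extend it to all of $TH_{\rm div}^{-1/2}({\partial X})$ by a density and continuity argument. Accordingly, I would begin by fixing $\bx\in\RR^3\setminus{\partial X}$ and taking $\mathbf{j}$ smooth and tangential. Since $\bx\notin{\partial X}$, the kernel $\by\mapsto g(\bx,\by)$ is smooth on ${\partial X}$, so differentiation may be carried out under the integral sign, giving
\begin{equation}
\nabla\cdot S^{\K}[\mathbf{j}](\bx)=\int_{{\partial X}}\nabla_\bx g(\bx,\by)\cdot\mathbf{j}(\by)\,d\sigma(\by).
\end{equation}

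The key algebraic step is the symmetry $\nabla_\bx g(\bx,\by)=-\nabla_\by g(\bx,\by)$, which holds because $g$ depends on $\bx$ and $\by$ only through $|\bx-\by|$. Substituting this and using that $\mathbf{j}$ is tangential, so that the normal component of $\nabla_\by g$ drops out of the dot product with $\mathbf{j}$, I would rewrite the integrand as $-\nabla_{{\partial X}}g(\bx,\by)\cdot\mathbf{j}(\by)$, where $\nabla_{{\partial X}}$ denotes the tangential gradient on ${\partial X}$. Then I would apply surface integration by parts, that is, the divergence theorem on the closed surface ${\partial X}$, which produces no boundary term because $\partial({\partial X})=\emptyset$:
\begin{equation}
-\int_{{\partial X}}\nabla_{{\partial X}}g(\bx,\by)\cdot\mathbf{j}(\by)\,d\sigma(\by)=\int_{{\partial X}}g(\bx,\by)\,{\rm div}_{{\partial X}}\mathbf{j}(\by)\,d\sigma(\by),
\end{equation}
whose right-hand side is precisely $S^{\K}[{\rm div}_{{\partial X}}\mathbf{j}](\bx)$. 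This settles the smooth case, uniformly in the wavenumber $\K$ since the computation only used the radial structure of $g$.

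The main obstacle is the passage to the low-regularity space $TH_{\rm div}^{-1/2}({\partial X})$, where the integrals above are no longer classical and the surface integration by parts must be read through the duality pairing that defines ${\rm div}_{{\partial X}}$. For this I would use that smooth tangential fields are dense in $TH_{\rm div}^{-1/2}({\partial X})$, together with the mapping properties of the single layer potential: for fixed $\bx\notin{\partial X}$ the functionals $\mathbf{j}\mapsto\nabla\cdot S^{\K}[\mathbf{j}](\bx)$ and $\mathbf{j}\mapsto S^{\K}[{\rm div}_{{\partial X}}\mathbf{j}](\bx)$ are continuous on $TH_{\rm div}^{-1/2}({\partial X})$. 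The latter is continuous because $\mathbf{j}\mapsto{\rm div}_{{\partial X}}\mathbf{j}$ is bounded into $H^{-1/2}({\partial X})$ by the very definition of the space, while $S^{\K}$ maps $H^{-1/2}({\partial X})$ continuously into functions that are smooth away from ${\partial X}$. Since the two continuous functionals agree on the dense subset of smooth densities, they agree on all of $TH_{\rm div}^{-1/2}({\partial X})$, which completes the argument. I expect the technical heart to be the careful verification that both maps are continuous in the $TH_{\rm div}^{-1/2}$-topology and that the surface integration-by-parts formula furnishes the correct weak interpretation of the duality identity through which ${\rm div}_{{\partial X}}$ is defined.
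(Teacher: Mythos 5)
Your argument is correct. Note that the paper itself gives no proof of this lemma --- it is quoted verbatim from McCamy and Stephan \cite{MS} --- so there is nothing to compare against; your derivation (differentiate under the integral for $\bx\notin{\partial X}$, use $\nabla_\bx g=-\nabla_\by g$, discard the normal component of $\nabla_\by g$ against the tangential density, integrate by parts on the closed surface, then extend by density and continuity of both sides on $TH_{\rm div}^{-1/2}({\partial X})$) is the standard and complete one.
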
 

Using $S^\K$ and Lemma \ref{LemDivS}, we define the electric single layer potential for $\mathbf{j}\in TH^{-1/2}_{\rm div}({\partial X})$ by
\begin{align}
&\mathcal{S}_{E}^\K\left[\,\mathbf{j}\,\right](\bx):=  S^\K[\,\mathbf{j}\,](\bx)+\frac{1}{\K^2}\nabla S^\K\left[\,{\rm div}_{\partial X}\mathbf{j}\,\right](\bx)= 
-\frac{1}{\epsilon_0} \ds\int_{{\partial X}} \mathbf{\Gamma}(\by,\bx)\,\mathbf{j}(\by)d\sigma(\by),
\end{align}
Moreover, for $\bpsi\in TH^{-1/2}_{\rm div}({\partial X})$, we define the magnetic dipole operator $\mathcal{P}^\K$ by 
\begin{eqnarray}
\mathcal{P}^{\K}[\bpsi](\bx):=\int_{{\partial X}}\nabla_\bx\times \left(g(\bx,\by)\bpsi(\by)\right)d\sigma(\by)\times \nu(\bx), \quad \bx\in{\partial X},
\end{eqnarray} 
and the operator $\mathcal{P}^\K_*$  by  
\begin{eqnarray}
\mathcal{P}^\K_*[\bpsi](\bx):= \mathcal{P}^{\K}[\bpsi\times \nu](\bx)\times\nu(\bx).
\end{eqnarray}

Then, following results hold. 
\begin{lem}[See {\cite[Section 5.5]{nedelec}} and {\cite[Section 6.3]{colton}}]\label{Lem2.4}
The operator $\mathcal{P}^\K$ is continuous mapping from $TH^{-1/2}_{\rm div}({\partial X})$ to itself. The operator $\frac{1}{2}\mathcal{I}-\mathcal{P}^\K$ is Fredholm of index zero from $TH^{-1/2}_{\rm div}({\partial X})$ to itself, where  $\mathcal{I}:TH^{-1/2}_{\rm div}({\partial X})\to TH^{-1/2}_{\rm div}({\partial X})$ is the identity operator. Moreover, for all $\bpsi\in TH^{-1/2}_{\rm div}({\partial X})$
\begin{eqnarray}
\mathcal{P}_*^\K[\bpsi]\times \nu = -\mathcal{P}^\K[\bpsi\times \nu].
\end{eqnarray}
\end{lem}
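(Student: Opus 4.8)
The plan is to treat the three assertions separately: I would dispatch the algebraic identity by direct computation and invoke the classical Maxwell boundary-integral machinery for the continuity and Fredholm statements. Beginning with the last identity, by definition $\mathcal{P}^\K_*[\bpsi]=\mathcal{P}^\K[\bpsi\times\nu]\times\nu$, so crossing once more with $\nu$ yields
\begin{equation*}
\mathcal{P}^\K_*[\bpsi]\times\nu=\big(\mathcal{P}^\K[\bpsi\times\nu]\times\nu\big)\times\nu.
\end{equation*}
Applying the vector triple product identity $(\mathbf{a}\times\nu)\times\nu=(\mathbf{a}\cdot\nu)\nu-|\nu|^2\mathbf{a}$ with $|\nu|=1$, and using the fact (established in the first assertion) that $\mathcal{P}^\K$ maps into the tangential trace space, so that $\mathcal{P}^\K[\bpsi\times\nu]\cdot\nu=0$, the first term vanishes and the desired identity $\mathcal{P}^\K_*[\bpsi]\times\nu=-\mathcal{P}^\K[\bpsi\times\nu]$ follows at once.

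For the continuity of $\mathcal{P}^\K$ on $TH^{-1/2}_{\rm div}({\partial X})$, I would start from the kernel $\nabla_\bx\times(g(\bx,\by)\bpsi(\by))\times\nu(\bx)$ and exploit the smoothness of ${\partial X}$. The gradient of $g$ produces an $|\bx-\by|^{-2}$ singularity, so the operator is of order zero on the boundary, and the natural setting is the trace space, where the surface-divergence constraint is respected because $\nabla_\bx\times$ interacts compatibly with the tangential structure. Rather than reprove the full mapping theorem, I would lean on \cite[Section 5.5]{nedelec} and \cite[Section 6.3]{colton}, where the magnetic dipole operator is shown to be bounded on $TH^{-1/2}_{\rm div}({\partial X})$; the essential input is that its principal symbol respects the Helmholtz decomposition of tangential fields into surface-gradient and surface-curl parts.

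The Fredholm statement is the main obstacle, and is where I would invest the most care. The strategy is a compact-perturbation argument: I would write $\mathcal{P}^\K=\mathcal{P}^0+(\mathcal{P}^\K-\mathcal{P}^0)$, where $\mathcal{P}^0$ denotes the static operator obtained by replacing $g$ with the Laplace kernel $1/(4\pi|\bx-\by|)$. Since $g(\bx,\by)-1/(4\pi|\bx-\by|)$ and its gradient remain bounded across the diagonal, the difference $\mathcal{P}^\K-\mathcal{P}^0$ has a strictly less singular kernel and is therefore compact on $TH^{-1/2}_{\rm div}({\partial X})$. It then suffices to show that $\frac{1}{2}\mathcal{I}-\mathcal{P}^0$ is Fredholm of index zero, which the cited references establish via the jump relations of the static magnetic dipole potential together with a coercivity estimate for the associated sesquilinear form on the tangential trace space. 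As both Fredholmness and the index are stable under compact perturbations, $\frac{1}{2}\mathcal{I}-\mathcal{P}^\K$ inherits the index-zero Fredholm property. The delicate point throughout is to conduct the analysis in $TH^{-1/2}_{\rm div}({\partial X})$ rather than in plain $L^2$ or $H^{-1/2}({\partial X})$, because the surface-divergence constraint is indispensable both for the boundedness and for the coercivity; this is precisely the Maxwell-specific ingredient supplied by \cite{nedelec}.
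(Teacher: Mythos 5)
Your proposal is correct and consistent with the paper, which gives no proof of this lemma beyond citing N\'ed\'elec and Colton--Kress for the continuity and Fredholm assertions---exactly the route you take, including the standard static-kernel compact-perturbation outline those references use. The final identity is, as you compute, immediate from the definition $\mathcal{P}^\K_*[\bpsi]=\mathcal{P}^\K[\bpsi\times\nu]\times\nu$, the triple-product formula, and the fact that $\mathcal{P}^\K[\bpsi\times\nu]$ is pointwise orthogonal to $\nu$ (this already follows from the trailing $\times\,\nu(\bx)$ in the definition of $\mathcal{P}^\K$, so the mapping property of the first assertion is not actually needed for that step).
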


\begin{lem}[See  {\cite[Lemma 2.6]{CL}}]\label{lemS}
The electric single layer potential $\mathcal{S}^\K_E$ is continuous from $TH^{-1/2}_{\rm div}({\partial X})$ to $H_{\rm loc}(\rm{\bf{curl}},\RR^3)$ and for all $\mathbf{j}\in TH^{-1/2}_{\rm div}({\partial X})$,  
\begin{eqnarray}
\left(\nabla\times\nabla\times-\K^2\mathcal{I}\right)\mathcal{S}^\K_E[\,\mathbf{j}\,](\bx) =\mathbf{0}.
\end{eqnarray} 
Moreover, $\mathcal{S}^\K_E$ satisfies the \emph{Silver-M\"uller} condition.
\end{lem}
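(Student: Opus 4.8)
The plan is to prove the three assertions separately, exploiting both representations of the potential,
\[
\mathcal{S}^\K_E[\mathbf{j}](\bx) = S^\K[\mathbf{j}](\bx)+\frac{1}{\K^2}\nabla S^\K[{\rm div}_{\partial X}\mathbf{j}](\bx) = -\frac{1}{\epsilon_0}\int_{\partial X}\mathbf{\Gamma}(\bx,\by)\,\mathbf{j}(\by)\,d\sigma(\by),
\]
where the last equality uses the symmetry $\mathbf{\Gamma}(\by,\bx)=\mathbf{\Gamma}(\bx,\by)$ from \eqref{Pro1}.

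For the governing equation I would fix $\bx\in\RR^3\setminus{\partial X}$ and differentiate under the integral sign in the dyadic representation. Since $\by\in{\partial X}$ while $\bx\notin{\partial X}$, the kernel $\mathbf{\Gamma}(\cdot,\by)$ is smooth on a neighbourhood of the evaluation point, so this is licit; inserting the defining identity $(\nabla_\bx\times\nabla_\bx\times-\K^2)\mathbf{\Gamma}(\bx,\by)=-\epsilon_0\,\delta_\by(\bx)\mathbf{I}_3$ leaves only $\int_{\partial X}\delta_\by(\bx)\mathbf{j}(\by)\,d\sigma(\by)$, which vanishes for $\bx\notin{\partial X}$. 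This gives $(\nabla\times\nabla\times-\K^2\mathcal{I})\mathcal{S}^\K_E[\mathbf{j}]=\mathbf{0}$ away from the boundary.

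The continuity statement is the crux, and I would argue from the first representation together with the classical bound $S^\K:H^{-1/2}({\partial X})\to H^1_{\rm loc}(\RR^3)$ (see \cite{nedelec,colton}). Applied componentwise this gives $S^\K[\mathbf{j}]\in (H^1_{\rm loc}(\RR^3))^3$ for $\mathbf{j}\in TH^{-1/2}({\partial X})$, hence $S^\K[\mathbf{j}]\in(L^2_{\rm loc})^3$ with curl in $(L^2_{\rm loc})^3$. Because $\mathbf{j}\in TH^{-1/2}_{\rm div}({\partial X})$ forces ${\rm div}_{\partial X}\mathbf{j}\in H^{-1/2}({\partial X})$, the same bound yields $S^\K[{\rm div}_{\partial X}\mathbf{j}]\in H^1_{\rm loc}$ and therefore the gradient term $\frac{1}{\K^2}\nabla S^\K[{\rm div}_{\partial X}\mathbf{j}]\in(L^2_{\rm loc})^3$; in particular $\mathcal{S}^\K_E[\mathbf{j}]\in(L^2_{\rm loc})^3$. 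The decisive observation is the identity $\nabla\times(\nabla\phi)=\mathbf{0}$, by which the gradient term contributes nothing to the curl, so $\nabla\times\mathcal{S}^\K_E[\mathbf{j}]=\nabla\times S^\K[\mathbf{j}]\in(L^2_{\rm loc})^3$. Tracking the operator norms through each of these steps delivers the continuous embedding into $H_{\rm loc}(\mathbf{curl},\RR^3)$. I expect the main obstacle to be exactly this gradient term: it is a priori only $L^2_{\rm loc}$, not $H^1_{\rm loc}$, so membership in the curl-space is rescued not by any regularity gain but by the vanishing of its curl.

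Finally, the Silver-M\"uller condition I would read off the dyadic representation: each kernel $\mathbf{\Gamma}(\bx,\by)$ satisfies \eqref{silver} with decay uniform for $\by$ in the compact set ${\partial X}$, so the radiation estimate may be integrated against $\mathbf{j}$ and the limit $|\bx|\to\infty$ passed inside the integral, transferring the condition to $\mathcal{S}^\K_E[\mathbf{j}]$.
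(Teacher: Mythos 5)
The paper does not actually prove this lemma; it is quoted verbatim from Costabel and Le Lou\"er \cite[Lemma 2.6]{CL}, so there is no in-text argument to compare against. Your proposal is, however, a correct and essentially self-contained proof along the standard lines, and the one genuinely delicate point --- that the term $\frac{1}{\K^2}\nabla S^{\K}[{\rm div}_{\partial X}\mathbf{j}]$ is only $L^2_{\rm loc}$ rather than $H^1_{\rm loc}$, and that membership in $H_{\rm loc}({\bf curl})$ is rescued solely by $\nabla\times\nabla\phi=\mathbf{0}$ --- is exactly the right thing to isolate. Two small remarks. First, since $\mathbf{j}\in TH^{-1/2}_{\rm div}({\partial X})$ is a distribution rather than a function, the ``integral'' against $\mathbf{\Gamma}(\cdot,\by)$ is a duality pairing; your differentiation under the integral sign is still licit because the kernel is $C^\infty$ in $\by$ near ${\partial X}$ for fixed $\bx\notin{\partial X}$, but it is worth saying so. Second, the governing equation can be obtained without the dyadic kernel at all: writing $\nabla\times\nabla\times=\nabla\,{\rm div}-\Delta$, Lemma \ref{LemDivS} gives $\nabla\cdot\mathcal{S}^\K_E[\mathbf{j}]=S^\K[{\rm div}_{\partial X}\mathbf{j}]+\frac{1}{\K^2}\Delta S^\K[{\rm div}_{\partial X}\mathbf{j}]=0$ off ${\partial X}$ by the Helmholtz equation for $S^\K$, and $-\Delta\mathcal{S}^\K_E[\mathbf{j}]=\K^2\mathcal{S}^\K_E[\mathbf{j}]$ follows likewise; this route stays entirely within the scalar representation and pairs more naturally with your continuity argument, which already lives there. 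Your treatment of the Silver--M\"uller condition (uniformity of the radiation estimates for $\by$ in the compact set ${\partial X}$, then passing to the limit in the pairing) is fine.
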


\begin{lem}[See {\cite[Theorem 6.12]{colton}}, {\cite[Theorem 5.5.1]{nedelec}} and {\cite[Section 5]{H}}]\label{lemJump}
For all $\mathbf{j}\in TH_{\rm div}^{-1/2}({\partial X})$, the traces $(S^\K_E[\,\mathbf{j}\,]\times\nu)^{\pm}$ and $( (\nabla\times S^\K_E[\,\mathbf{j}\,])\times\nu)^{\pm}$ are well defined and
\begin{eqnarray}
&&\Big(S^\K_E[\,\mathbf{j}\,]\times\nu\Big)^{\pm}
=\mathbf{0},
\\
&&
\Big( (\nabla\times S^\K_E[\,\mathbf{j}\,])\times\nu\Big)^{\pm}=\left(\mp \frac{1}{2}\mathcal{I}+\mathcal{P}^\K\right)[\,\mathbf{j}\,]. 
\end{eqnarray}
Here superscripts $+$ and $-$ indicate the limiting values at ${\partial X}$ from outside and inside $X$ respectively.
\end{lem}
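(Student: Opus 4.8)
The plan is to reduce both identities to the classical jump relations for the \emph{scalar} single layer potential $S^\K$, exploiting the algebraic structure of the defining formula $\mathcal{S}^\K_E[\mathbf{j}]=S^\K[\mathbf{j}]+\K^{-2}\nabla S^\K[{\rm div}_{\partial X}\mathbf{j}]$. That the two tangential traces are well defined is not an extra burden: Lemma~\ref{lemS} gives $\mathcal{S}^\K_E[\mathbf{j}]\in H_{\rm loc}(\mathbf{curl},\RR^3)$, and the identity $(\nabla\times\nabla\times-\K^2\mathcal{I})\mathcal{S}^\K_E[\mathbf{j}]=\mathbf{0}$ shows $\nabla\times\nabla\times\mathcal{S}^\K_E[\mathbf{j}]=\K^2\mathcal{S}^\K_E[\mathbf{j}]\in L^2_{\rm loc}$, so $\nabla\times\mathcal{S}^\K_E[\mathbf{j}]\in H_{\rm loc}(\mathbf{curl},\RR^3)$ as well; both fields therefore admit one-sided tangential traces in $TH^{-1/2}_{\rm div}({\partial X})$. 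I would establish the identities first for smooth densities $\mathbf{j}$, where the classical computations apply verbatim, and then pass to $TH^{-1/2}_{\rm div}({\partial X})$ by density together with the continuity of the trace maps on $H(\mathbf{curl})$ and of $\mathcal{P}^\K$ (Lemma~\ref{Lem2.4}).

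For the first identity I would show that the tangential trace of $\mathcal{S}^\K_E[\mathbf{j}]$ carries no jump across ${\partial X}$. The vector single layer $S^\K[\mathbf{j}]$ is continuous across ${\partial X}$ (componentwise continuity of the scalar single layer), while the correction $\K^{-2}\nabla S^\K[{\rm div}_{\partial X}\mathbf{j}]$ enters $\nu\times(\cdot)$ only through the \emph{tangential} part of $\nabla S^\K[{\rm div}_{\partial X}\mathbf{j}]$. Since the scalar potential $S^\K[{\rm div}_{\partial X}\mathbf{j}]$ is itself continuous across ${\partial X}$, its surface gradient is continuous and only its normal derivative jumps; applying $\nu\times$ annihilates that normal component. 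Hence the two one-sided tangential traces coincide, which is the content of the first relation.

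The second identity is where the real work lies, but it is unlocked by a single observation: $\nabla\times\nabla(\cdot)\equiv\mathbf{0}$ kills the gradient correction, so $\nabla\times\mathcal{S}^\K_E[\mathbf{j}]=\nabla\times S^\K[\mathbf{j}]$ and I need only analyse $\nabla_\bx\times S^\K[\mathbf{j}](\bx)=\int_{\partial X}\nabla_\bx g(\bx,\by)\times\mathbf{j}(\by)\,d\sigma(\by)$. Comparing with the definition of $\mathcal{P}^\K$ and using $\nabla_\bx\times(g\mathbf{j})=\nabla_\bx g\times\mathbf{j}$, the principal value of $(\nabla\times S^\K[\mathbf{j}])\times\nu$ on ${\partial X}$ is exactly $\mathcal{P}^\K[\mathbf{j}]$, which will be the average of the two one-sided limits. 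The jump itself is governed by the same $\mp\frac{1}{2}$ mechanism that produces the Neumann-Poincar\'e term for the scalar single layer: parametrising $\bx=\bx_0\pm t\nu(\bx_0)$ and isolating the strongly singular kernel $\nabla_\bx g\sim(\bx-\by)/(4\pi|\bx-\by|^3)$, the boundary limit splits into a $\mp\frac{1}{2}\mathbf{j}(\bx_0)$ contribution plus the convergent principal-value integral. Combining gives $((\nabla\times S^\K[\mathbf{j}])\times\nu)^\pm=(\mp\frac{1}{2}\mathcal{I}+\mathcal{P}^\K)[\mathbf{j}]$, as claimed.

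The main obstacle, and the step I would treat most carefully, is the rigorous justification of the $\mp\frac{1}{2}$ jump and the identification of the limit with the principal-value operator $\mathcal{P}^\K$ at the low regularity $TH^{-1/2}_{\rm div}({\partial X})$, where the integrals live only in a duality sense. I would resolve this by proving the relations for $\mathbf{j}\in\mathcal{C}^\infty$ and then extending by the density of smooth tangential fields in $TH^{-1/2}_{\rm div}({\partial X})$, the boundedness of the one-sided trace operators on $H_{\rm loc}(\mathbf{curl},\RR^3)$, and the continuity of $\mathcal{P}^\K$ from Lemma~\ref{Lem2.4}. Since these are classical layer-potential jump formulas, an equally valid route is to invoke the cited results of Colton, N\'ed\'elec and H\"ahner after matching sign and orientation conventions; the sketch above indicates how to reproduce them directly.
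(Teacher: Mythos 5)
The paper does not actually prove Lemma \ref{lemJump}; it is stated as a classical result with citations to Colton--Kress, N\'ed\'elec and Hiptmair, so there is no in-paper argument to compare against. Your sketch is a correct outline of the standard proof found in those references: the reduction of the second identity via $\nabla\times\nabla(\cdot)=\mathbf{0}$ to the curl of the plain vector single layer, the identification of the principal value with $\mathcal{P}^\K$ from its definition, the $\mp\tfrac{1}{2}$ jump extracted from the strongly singular part of $\nabla_\bx g$, and the extension from smooth densities to $TH^{-1/2}_{\rm div}({\partial X})$ by density plus continuity of the trace maps on $H({\rm{\bf curl}})$ and of $\mathcal{P}^\K$ (Lemma \ref{Lem2.4}) is exactly how these jump relations are established. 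Two remarks. First, the lemma as printed reads $\bigl(\mathcal{S}^\K_E[\,\mathbf{j}\,]\times\nu\bigr)^{\pm}=\mathbf{0}$, which taken literally would say the one-sided tangential traces themselves vanish; that cannot be intended (it would make the electric single layer operator on the boundary trivial, contradicting its role in the EFIE), and your reading --- that the \emph{jump} vanishes, i.e.\ the two one-sided traces coincide --- is the correct content of the cited theorems; your argument for it (continuity of $S^\K[\,\mathbf{j}\,]$ across ${\partial X}$, plus the fact that $\nu\times\nabla S^\K[{\rm div}_{\partial X}\mathbf{j}]$ sees only the continuous tangential part of the gradient because $\nu\times\nu=\mathbf{0}$) is sound. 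Second, a cosmetic point: the leading singularity is $\nabla_\bx g\sim-(\bx-\by)/(4\pi|\bx-\by|^3)$, not $+(\bx-\by)/(4\pi|\bx-\by|^3)$; this does not affect your conclusion since you explicitly defer the sign bookkeeping to the convention-matching step, but it is the place where the $\mp$ orientation is decided.
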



\subsection{Polarization tensor}

Let us define the piecewise constant function $\gamma$ by 
\begin{eqnarray}
\gamma(\bx):=
\begin{cases}
\gamma_0, & \bx\in\RR^3\setminus\overline{X}, 
\\
\gamma_X, & \bx\in X, 
\end{cases}
\end{eqnarray} 
where $\gamma_0,\gamma_X\in\mathbb{C}$ such that $\Re e\{\gamma_0\}, \Re e\{\gamma_X\}>0$, and  let $v_i$ be the scalar potential defined as the solution to the transmission problem 
\begin{eqnarray}
\begin{cases}
\Delta v_i= 0, & (\RR^3 \setminus\overline{X})\cup X, 
\\
v_i^+-v_i^-=0, & {\partial X}, 
\\
\ds\frac{\gamma_0}{\gamma_1}\left(\frac{\partial v_i}{\partial\nu}\right)^+ -\left(\frac{\partial v_i}{\partial \nu}\right)^-=0, & {\partial X},
\\
v_i(\bx)-x_i\to 0, & |\bx|\to \infty. 
\end{cases}\label{vi}
\end{eqnarray}
We define the polarization tensor $\mathbf{M}_X(k):= (m_{ij})_{i,j=1}^3$,  associated with the domain $X$ depending on the contrast $k:=\gamma_0/\gamma_X$, by 
\begin{eqnarray}
m_{ij}\left(k\right):=\ds\frac{1}{k}\int_{X}\frac{\partial v_i}{\partial x_j}d\bx.
\end{eqnarray} 

\begin{lem}[See {\cite[Section 3.1]{AK}}]\label{PT}
The tensor $\mathbf{M}_X(k)$ is real symmetric positive definite if $k\in\RR_+$. Moreover, when $X$ is a ball 
\begin{eqnarray}
\mathbf{M}_X(k)= \dfrac{3}{2k+1}|X|\mathbf{I}_3.
\end{eqnarray}
\end{lem}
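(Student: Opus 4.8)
\noindent The plan is to dispatch the ball case by an explicit computation, establish symmetry and reality by a Green's/energy identity, and obtain positive definiteness from two complementary energy representations.

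First I would treat the ball. For $X=\mathbf{B}(0,R)$ the transmission problem \eqref{vi} admits the explicit ansatz $v_i(\bx)=a\,x_i$ in $X$ and $v_i(\bx)=x_i+b\,x_i/|\bx|^3$ in $\RR^3\setminus\overline X$, both harmonic and matching $x_i$ at infinity. Imposing continuity across $\partial X$ and the flux condition $k(\partial_\nu v_i)^+=(\partial_\nu v_i)^-$ on $|\bx|=R$ gives two linear relations for $a,b$, whose solution is $a=3k/(2k+1)$. Since $\partial_j v_i=a\,\delta_{ij}$ inside $X$, I get $\int_X\partial_j v_i\,d\bx=a\,\delta_{ij}|X|$ and hence $m_{ij}(k)=\tfrac1k a\,\delta_{ij}|X|=\tfrac{3}{2k+1}|X|\delta_{ij}$, which is the claimed formula.

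For the general statement I would recast \eqref{vi} in divergence form: writing $\gamma=\gamma_0$ in $\RR^3\setminus\overline X$ and $\gamma=\gamma_X$ in $X$ (only the real contrast $k=\gamma_0/\gamma_X$ enters), the transmission conditions are exactly the weak form of $\nabla\cdot(\gamma\nabla v_i)=0$ in $\RR^3$. Introducing the corrector $w_i:=v_i-x_i$, which decays like $|\bx|^{-2}$ at infinity, a piecewise integration by parts yields
\begin{equation}
\int_{\RR^3}\gamma\,\nabla w_i\cdot\nabla\varphi\,d\bx=(\gamma_0-\gamma_X)\int_{\partial X}\nu_i\,\varphi\,d\sigma
\end{equation}
for suitable test fields $\varphi$. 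Testing with $\varphi=w_j$ makes the left-hand side symmetric in $(i,j)$, so $\int_{\partial X}\nu_i w_j\,d\sigma=\int_{\partial X}\nu_j w_i\,d\sigma$. Combining this with $\int_X\partial_j v_i\,d\bx=\int_{\partial X}(w_i+x_i)\nu_j\,d\sigma$ and $\int_{\partial X}x_i\nu_j\,d\sigma=\delta_{ij}|X|$ (divergence theorem) gives $\int_X\partial_j v_i=\int_X\partial_i v_j$, i.e. $m_{ij}=m_{ji}$. Reality of $m_{ij}$ for $k\in\RR_+$ is immediate, since the problem then has real coefficients and real data, so $v_i$ is real-valued.

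For positive definiteness, fix $\xi\in\RR^3\setminus\{0\}$ and set $v_\xi:=\sum_i\xi_i v_i$, $w_\xi:=v_\xi-\xi\cdot\bx$; a short computation gives $\xi^{\top}\mathbf{M}_X(k)\,\xi=\tfrac1k\int_X\xi\cdot\nabla v_\xi\,d\bx$. Testing the weak identity with $\varphi=w_\xi$ produces the \emph{primal} representation
\begin{equation}
\xi^{\top}\mathbf{M}_X(k)\,\xi=\frac{\gamma_X}{\gamma_0}\Big(|\xi|^2|X|+\frac{1}{\gamma_0-\gamma_X}\int_{\RR^3}\gamma\,|\nabla w_\xi|^2\,d\bx\Big),
\end{equation}
which for $k>1$ (so $\gamma_0>\gamma_X$) has all terms nonnegative with the first strictly positive, giving $\xi^{\top}\mathbf{M}_X(k)\xi>0$. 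This form is sign-indefinite when $0<k<1$, so there I would instead rewrite the same quantity via the divergence-free perturbed current $\tau:=\gamma\nabla v_\xi-\gamma_0\xi$, obtaining the \emph{complementary} representation
\begin{equation}
\xi^{\top}\mathbf{M}_X(k)\,\xi=|\xi|^2|X|+\frac{\gamma_X}{\gamma_0(\gamma_X-\gamma_0)}\int_{\RR^3}\frac{1}{\gamma}\,|\tau|^2\,d\bx,
\end{equation}
whose prefactor is positive precisely when $\gamma_X>\gamma_0$, i.e. $0<k<1$, whence again $\xi^{\top}\mathbf{M}_X(k)\xi\geq|\xi|^2|X|>0$. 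The two representations together cover all $k\in\RR_+\setminus\{1\}$ (with $k=1$ trivial, since then $v_i=x_i$ and $\mathbf{M}_X=|X|\mathbf{I}_3$), so $\mathbf{M}_X(k)$ is positive definite.

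The routine parts are the ball computation and the symmetry argument. The step needing the most care is the positivity analysis for $0<k<1$: the key observation is that the primal energy identity changes sign, and one must replace it by the complementary identity in $\tau$, obtained using the relation $\int\gamma|\nabla w_\xi|^2=(\gamma_0-\gamma_X)\int_X\xi\cdot\nabla w_\xi$. Throughout, the main technical point is justifying the integrations by parts over the unbounded exterior domain, which rests on the dipole decay $w_i=O(|\bx|^{-2})$, $\nabla w_i=O(|\bx|^{-3})$, so that all spherical boundary contributions vanish as the radius tends to infinity.
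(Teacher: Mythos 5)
The paper does not actually prove Lemma \ref{PT}: it is quoted verbatim from \cite[Section 3.1]{AK} (Ammari--Kang), so there is no in-text argument to compare against. Your proposal is a correct, self-contained proof, and it follows the same classical variational strategy as the cited source. The ball computation is right: with $v_i=a x_i$ inside and $v_i=x_i+b x_i/|\bx|^3$ outside, continuity gives $a=1+b/R^3$ and the flux condition $k(\partial_\nu v_i)^+=(\partial_\nu v_i)^-$ gives $k(1-2b/R^3)=a$, whence $a=3k/(2k+1)$ and $m_{ij}=\tfrac1k a\,\delta_{ij}|X|$ as claimed. The weak identity $\int_{\RR^3}\gamma\nabla w_i\cdot\nabla\varphi\,d\bx=(\gamma_0-\gamma_X)\int_{\partial X}\nu_i\varphi\,d\sigma$ is the correct divergence-form reformulation of \eqref{vi}, and testing with $w_j$ does yield $\int_{\partial X}\nu_i w_j=\int_{\partial X}\nu_j w_i$ and hence symmetry. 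For positivity, I checked both energy representations: the primal one gives $\xi^{\top}\mathbf{M}_X(k)\xi=\tfrac1k\bigl(|\xi|^2|X|+(\gamma_0-\gamma_X)^{-1}\int\gamma|\nabla w_\xi|^2\bigr)$, and expanding $\int\gamma^{-1}|\tau|^2$ with $\tau=(\gamma-\gamma_0)\xi+\gamma\nabla w_\xi$ indeed produces $\xi^{\top}\mathbf{M}_X(k)\xi=|\xi|^2|X|+\tfrac{\gamma_X}{\gamma_0(\gamma_X-\gamma_0)}\int\gamma^{-1}|\tau|^2$, so the two cases $k>1$ and $0<k<1$ are covered with $k=1$ trivial. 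This two-sided primal/complementary energy argument is exactly the standard device used in \cite{AK} to prove positive definiteness (and, there, the sharper Hashin--Shtrikman-type bounds); the only point worth making explicit in a write-up is the absence of a monopole term in $w_i$ (which follows from $\int_{\partial X}(\partial_\nu w_i)^+d\sigma=0$), justifying the $O(|\bx|^{-2})$ decay you invoke to kill the boundary terms at infinity.
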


\section{Topological derivative based imaging framework} \label{framework}

Let $\bz_S\in\Omega$ be a search point. Nucleate a trial inclusion $D_\delta=\delta B_S+\mathbf{z}_S$ inside the background $\Omega$ with permittivity $\epsilon_\delta$ and permeability $\mu_\delta$ defined by 
\begin{eqnarray}
\mu_\delta(\bx):=
\begin{cases}
\mu_2, & \bx\in D_\delta,
\\
\mu_0, & \bx\in\RR^3\setminus\overline{D_\delta},
\end{cases}
\quad\text{and}\quad
\epsilon_\delta(\bx):=
\begin{cases}
\epsilon_2, & \bx\in D_\delta,
\\
\epsilon_0, & \bx\in\RR^3\setminus\overline{D_\delta},
\end{cases}
\end{eqnarray}
where $\epsilon_2,\mu_2>0$.
Let $\mathbf{H}_\delta$  be the magnetic field in the presence of  inclusion $D_\delta$ in $\Omega$ satisfying a transmission problem analogous to that in \eqref{Hrho}. We collect $\mathbf{H}_{\delta}\times\nu(\bx)$ for all $\bx\in\partial\Omega$ and define the discrepancy functional 
\begin{eqnarray}
\ds\mathcal{H}_f[\mathbf{H}_0](\bz_S)
:= \frac{1}{2}\int_{\partial\Omega} \left|\left(\frac{1}{2}\mathcal{I}-\mathcal{P}^\K\right) \Big[\left(\mathbf{H}_{\rho}-\mathbf{H}_{\delta}\right)\times\nu\Big](\bx)\right|^2 d\sigma(\bx).\label{Dis}
\end{eqnarray}

Here the subscript $f$ substantiates the use of a filter $\left(\frac{1}{2}\mathcal{I}-\mathcal{P}^\K\right)$ in the cost functional. As already established by \citet{AGJK, TDelastic} for the case of Helmholtz and elasticity equations, the identification of the exact location of true inclusion using the classical $L^2-$cost functional over a bounded domain cannot be guaranteed, and the post-processing of the data is necessary. We establish later on that guaranteed identification can be achieved using filtered discrepancy functional $\mathcal{H}_f$. It is emphasized that the post-processing compensates for the effects of an imposed Neumann boundary condition on the magnetic field. 

By construction, the search point $\bz_S$ relative to which the field  $\mathbf{H}_\delta$ minimizes the functional $\mathcal{H}_f[\mathbf{H}_0]$ is a potential candidate for $\bz_D$. 
In order to study the optimization problem \eqref{Dis}, we define the \emph{topological derivative} of misfit $\mathcal{H}_f$ as follows. 
\begin{defn}[Topological derivative] 
For any $\bz_S\in\Omega$ and incident field $\mathbf{H}_0$, the topological derivative (imaging functional) of the misfit $\mathcal{H}_f[\mathbf{H}_0]$, hereafter denoted by ${\partial}_T\mathcal{H}_f[\mathbf{H}_0]$, is defined by 
\begin{eqnarray}
{\partial}_T\mathcal{H}_f[\mathbf{H}_0](\bz_S):= -\ds\frac{\partial\mathcal{H}_f[\mathbf{H_0}](\bz_S)}{\partial(\delta^3)}.
\end{eqnarray}
\end{defn}

\noindent{\textbf{Nota Bene:}}\quad In the sequel, we systematically adopt the following notation for brevity.
$$
\frac{\mu_0}{\mu_1}:=\mu_{1r},
\quad
\frac{\epsilon_0}{\epsilon_1}:=\epsilon_{1r},
\quad
\frac{\mu_0}{\mu_2}:=\mu_{2r},
\quad
\frac{\epsilon_0}{\epsilon_2}:=\epsilon_{2r},
\quad
a_\mu:= (\mu_{2r}-1),
\quad
a_\epsilon:=(\epsilon_{2r}-1),
$$
$$
\mathbf{M}_D^\mu:= \mathbf{M}_{B_D}\left(\mu_{1r}\right),
\quad
\mathbf{M}_D^\epsilon= \mathbf{M}_{B_D}\left(\epsilon_{1r}\right)
\quad
\mathbf{M}_S^\mu:= \mathbf{M}_{B_S}\left(\mu_{2r}\right),
\quad
\mathbf{M}_S^\epsilon= \mathbf{M}_{B_S}\left(\epsilon_{2r}\right)
$$

The following asymptotic expansion of the scattered magnetic field due to the presence of inclusion $D_\rho$ versus scale factor $\rho$, is the key ingredient to evaluate   $\partial_T\mathcal{H}_f[\mathbf{H}_0]$. 

\begin{thm}[See {\cite[Theorem 1]{AVV}}]\label{Asymp}
For all $\bx\in \partial \Omega$, and $D_\rho=\rho B_D+\bz_D$ satisfying \eqref{assumption}  
\begin{align}
\nonumber
\ds\Big(\frac{1}{2}\mathcal{I}-&\mathcal{P}^\K\Big)
\big[(\mathbf{H}_\rho-\mathbf{H_0})\times\nu\big](\bx)
={\rho^3\K^2\left(\mu_{1r}-1\right)}\epsilon_0^{-1}\big[\mathbf{\Gamma}(\bz_D, \bx)\times\nu(\bx)\big]\mathbf{M}_{D}^\mu\mathbf{H}_0(\bz_D)
\nonumber
\\
&\qquad +{\rho^3(\epsilon_{1r}-1)}{\epsilon_0^{-1}}\Big[\Big(\nabla_{\bz_D}\times\mathbf{\Gamma}(\bx,\bz_D)\Big)^T\times\nu(\bx)\Big]\mathbf{M}_{D}^\epsilon\nabla\times\mathbf{H}_0(\bz_D)+O(\rho^4),\label{AE1}
\end{align}
where the term $O(\rho^4)$ is bounded by $C\rho^4$ uniformly on $\bx$ with constant $C$ independent on $\bz_D$.
\end{thm}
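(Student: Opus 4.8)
The plan is to obtain \eqref{AE1} from a boundary integral representation of the scattered field $\mathbf{w}:=\mathbf{H}_\rho-\mathbf{H}_0$, to reduce the transmission conditions on $\partial D$ to integral equations for its traces, and to extract the leading order by a quasi-static rescaling that generates the polarization tensors. First I would record that $\mathbf{w}$ solves $\nabla\times\nabla\times\mathbf{w}-\K^2\mathbf{w}=\mathbf{0}$ in $\Omega\setminus\overline{D}$ with the homogeneous trace $\epsilon_0^{-1}(\nabla\times\mathbf{w})\times\nu=\mathbf{0}$ on $\partial\Omega$ — both $\mathbf{H}_\rho$ and $\mathbf{H}_0$ carry the same Neumann data $\mathbf{h}$ in \eqref{Hrho} and \eqref{H0} — while across $\partial D$ it inherits the contrast conditions of \eqref{Hrho}. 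Using the electric single layer potential of Lemma \ref{lemS} and the dyadic Green's function $\mathbf{\Gamma}$, I would write the Stratton--Chu (Green's) representation of $\mathbf{w}$ as surface integrals over $\partial D$ and $\partial\Omega$ of the tangential traces $\mathbf{w}\times\nu$ and $(\nabla\times\mathbf{w})\times\nu$, and turn the two contrast conditions on $\mu$ and $\epsilon$ into a boundary integral system for the unknown densities on $\partial D$.

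Second, I would rescale by $\by=\bz_D+\rho\boldsymbol{\xi}$ with $\boldsymbol{\xi}\in\partial B_D$ and Taylor expand $\mathbf{\Gamma}(\by,\bx)$ and $\nabla_\by\times\mathbf{\Gamma}(\by,\bx)$ about $\bz_D$, which is licit since ${\rm dist}(D,\partial\Omega)\geq d_0>0$. Because $\rho\K\ll1$ by \eqref{assumption}, the inner problem degenerates at leading order into two decoupled static transmission problems on the reference domain $B_D$: the permeability contrast drives a Laplace problem of the type \eqref{vi} with contrast $k=\mu_{1r}$, whose solution yields $\mathbf{M}_D^\mu=\mathbf{M}_{B_D}(\mu_{1r})$ acting on the locally constant field $\mathbf{H}_0(\bz_D)$, while the permittivity contrast gives the analogous problem with contrast $k=\epsilon_{1r}$ and produces $\mathbf{M}_D^\epsilon=\mathbf{M}_{B_D}(\epsilon_{1r})$ acting on $\nabla\times\mathbf{H}_0(\bz_D)$. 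The volume scaling $\rho^3=|D|/|B_D|$ then factors out, and the distinct prefactors $\K^2(\mu_{1r}-1)$ and $(\epsilon_{1r}-1)$ reflect whether the induced dipole sources the $\mathbf{H}$-field equation directly — the magnetic polarization, propagated by $\mathbf{\Gamma}(\bz_D,\bx)$ — or through a curl — the electric polarization, propagated by $\nabla_{\bz_D}\times\mathbf{\Gamma}(\bx,\bz_D)$.

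Third, I would pass to $\partial\Omega$ and apply the filter $\tfrac12\mathcal{I}-\mathcal{P}^\K$. The Stratton--Chu formula leaves, besides the $\partial D$ dipole contribution, a self-interaction integral over $\partial\Omega$ against the kernel $\nabla_\by\times\mathbf{\Gamma}$ carrying $\mathbf{w}\times\nu$, since $(\nabla\times\mathbf{w})\times\nu=\mathbf{0}$ there. Taking the tangential trace and using the jump relations of Lemma \ref{lemJump}, this self-interaction is expressed through $\tfrac12\mathcal{I}\pm\mathcal{P}^\K$, and the filter is chosen precisely so as to annihilate it, retaining only the propagation of the two dipoles from $\bz_D$ to $\bx$ through the free-space kernel $\mathbf{\Gamma}$; the continuity of $\mathcal{P}^\K$ in Lemma \ref{Lem2.4} makes this bounded on $TH^{-1/2}_{\rm div}(\partial\Omega)$, and the reciprocity identities \eqref{Pro1} recast $\nabla_\by\times\mathbf{\Gamma}$ into the transposed-curl form of \eqref{AE1}.

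The main obstacle, to my mind, is the uniform control of the remainder. I would have to show that the discarded contributions — higher Taylor coefficients of $\mathbf{\Gamma}$, curvature corrections on $\partial D$, and the $O((\rho\K)^2)$ departure of the inner problem from its static limit — are bounded by $C\rho^4$ with $C$ independent of $\bz_D$ and uniform in $\bx\in\partial\Omega$. This hinges on (i) the invertibility of the $\rho$-parametrized boundary integral system uniformly as $\rho\to0$, which I would obtain from the Fredholm property of $\tfrac12\mathcal{I}-\mathcal{P}^\K$ in Lemma \ref{Lem2.4} together with injectivity of the static limit operator, and (ii) the separation $d_0>0$, which keeps $\mathbf{\Gamma}(\cdot,\bx)$ and its derivatives bounded on $D$ for every $\bx\in\partial\Omega$. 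Securing this uniform invertibility, and with it the clean decoupling of the electric and magnetic polarizations at order $\rho^3$, is where the bulk of the technical effort lies.
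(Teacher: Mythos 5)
The paper offers no proof of this theorem: it is quoted directly from \cite[Theorem 1]{AVV} (with the filter $\tfrac12\mathcal{I}-\mathcal{P}^\K$ applied to both sides), so there is no internal argument to measure yours against. That said, your sketch is a faithful outline of the standard derivation, and you correctly isolate the one step that is specific to the form \eqref{AE1} used here: the reason the free-space kernel $\mathbf{\Gamma}$, rather than the Neumann function $\mathbf{G}$ of $\Omega$, appears on the right-hand side is exactly the ``annihilation of the boundary self-interaction'' you describe. Concretely, by Lemma \ref{Lem2.4} one has $\big(\tfrac12\mathcal{I}-\mathcal{P}^\K\big)[\mathbf{u}\times\nu]=\big(\big(\tfrac12\mathcal{I}+\mathcal{P}^\K_*\big)[\mathbf{u}]\big)\times\nu$, and the identity of Lemma \ref{Lem6.1} converts $\big(\tfrac12\mathcal{I}+\mathcal{P}^\K_*\big)\mathbf{G}$ into $\mathbf{\Gamma}$; this is precisely how the paper itself motivates the filter (``compensates for the effects of an imposed Neumann boundary condition''). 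Your reduction of the inner problem to the two decoupled static transmission problems of type \eqref{vi} with contrasts $\mu_{1r}$ and $\epsilon_{1r}$, producing $\mathbf{M}_D^\mu$ acting on $\mathbf{H}_0(\bz_D)$ and $\mathbf{M}_D^\epsilon$ acting on $\nabla\times\mathbf{H}_0(\bz_D)$, is also the correct mechanism. Where the proposal stops short of a proof is exactly where you say it does: the uniform $O(\rho^4)$ remainder and the uniform-in-$\rho$ invertibility of the boundary integral system are asserted as the ``main obstacle'' but not carried out, and in \cite{AVV} these estimates (obtained there via energy estimates in $H(\Omega;{\rm\bf curl})$ for the corrector, uniform in $\bz_D$ thanks to the separation $d_0$ in \eqref{assumption}) constitute essentially the entire technical content. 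So: as a reconstruction of a cited result your strategy is sound and consistent with the literature; as a self-contained proof it is incomplete at the acknowledged step, and for the purposes of this paper the honest course is the one the author takes, namely to cite \cite{AVV}.
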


Remark that, from Theorem \ref{Asymp}, we also have for all $\bx\in\partial\Omega$ and $\bz_S\in\Omega$
\begin{align}
\nonumber
\ds\Big(\frac{1}{2}\mathcal{I}-\mathcal{P}^\K\Big)
\big[(\mathbf{H}_\delta-&\mathbf{H_0})
\times\nu\big](\bx)
={\delta^3\K^2a_\mu}{\epsilon_0^{-1}}\big[\mathbf{\Gamma}(\bz_S, \bx)\times \nu(\bx)\big] \mathbf{M}_{S}^\mu\mathbf{H}_0(\bz_S)
\nonumber
\\
& 
+{\delta^3a_\epsilon}{\epsilon_0^{-1}}\Big[\Big(\nabla_{\bz_S}\times
\mathbf{\Gamma}(\bx,\bz_S)\Big)^T\times\nu(\bx)\Big]
\mathbf{M}_{S}^\epsilon\nabla\times\mathbf{H}_0(\bz_S)
+O(\delta^4).\label{AE2}
\end{align}

\begin{thm}
For all  $\bz_S\in\Omega$ and incident fields $\mathbf{H}_0$,  
\begin{align}
\partial_T\mathcal{H}_f[\mathbf{H}_0](\bz_S)
= -\Re e\Big\{\K^2a_\mu
&\bU(\bz_S)\cdot \mathbf{M}_{S}^\mu\mathbf{H}_0(\bz_S) 
+a_\epsilon\nabla\times \bU(\bz_S)\cdot \mathbf{M}_{S}^\epsilon\nabla\times\mathbf{H}_0(\bz_S)\Big\},
\label{TD}
\end{align} 
where the \emph{back-propagator} $\bU$ is defined by 
\begin{eqnarray}
\bU(\bz)&:=&\mathcal{S}^\K_E\left[\nu\times\mathbf{W}\right](\bz)
\quad \text{with}\quad  
\mathbf{W}(\bz):=\overline{\left(\frac{1}{2}\mathcal{I}-\mathcal{P}^\K\right)\left[(\mathbf{H}_\rho-\mathbf{H}_0)\times\nu\right](\bz)}.
\label{U}
\end{eqnarray} 
\end{thm}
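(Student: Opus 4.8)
The plan is to reduce the computation to a direct differentiation of the filtered misfit in the variable $\delta^3$, keeping only the leading-order contribution, and then to recognize the resulting boundary integrals as the back-propagator $\bU$ and its curl via the layer-potential representation and the reciprocity relations already recorded.

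First I would split the argument of the filter using $\mathbf{H}_\rho-\mathbf{H}_\delta=(\mathbf{H}_\rho-\mathbf{H}_0)-(\mathbf{H}_\delta-\mathbf{H}_0)$, so that $(\frac{1}{2}\mathcal{I}-\mathcal{P}^\K)[(\mathbf{H}_\rho-\mathbf{H}_\delta)\times\nu]=\mathbf{A}-\mathbf{B}_\delta$, where $\mathbf{A}:=(\frac{1}{2}\mathcal{I}-\mathcal{P}^\K)[(\mathbf{H}_\rho-\mathbf{H}_0)\times\nu]$ is independent of $\delta$ and carries the measured data, while $\mathbf{B}_\delta:=(\frac{1}{2}\mathcal{I}-\mathcal{P}^\K)[(\mathbf{H}_\delta-\mathbf{H}_0)\times\nu]$ is the trial contribution. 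Expanding the modulus squared gives $|\mathbf{A}-\mathbf{B}_\delta|^2=|\mathbf{A}|^2-2\Re e(\overline{\mathbf{A}}\cdot\mathbf{B}_\delta)+|\mathbf{B}_\delta|^2$ with the bilinear dot product. By Theorem~\ref{Asymp} in the form \eqref{AE2}, $\mathbf{B}_\delta=\delta^3\mathbf{b}+O(\delta^4)$, where $\mathbf{b}$ is the bracketed leading coefficient. Differentiating in $\delta^3$ and retaining the leading term, the $|\mathbf{A}|^2$ piece drops (it is $\delta$-independent), the self-interaction $|\mathbf{B}_\delta|^2=O(\delta^6)$ contributes $O(\delta^3)$ and vanishes as $\delta\to 0$, and only the cross term survives, yielding $\partial_T\mathcal{H}_f[\mathbf{H}_0](\bz_S)=\Re e\int_{\partial\Omega}\overline{\mathbf{A}}\cdot\mathbf{b}\,d\sigma$. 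Recognizing $\overline{\mathbf{A}}=\mathbf{W}$ from the definition \eqref{U} completes the reduction.

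The heart of the argument is then to identify the two boundary integrals obtained by inserting $\mathbf{b}$. For the magnetic (permeability) term I would use that $\times\nu$ acts column-wise, so $[\mathbf{\Gamma}(\bz_S,\bx)\times\nu]\mathbf{p}=[\mathbf{\Gamma}(\bz_S,\bx)\mathbf{p}]\times\nu$ with $\mathbf{p}=\mathbf{M}_S^\mu\mathbf{H}_0(\bz_S)$; the scalar triple product then converts $\mathbf{W}\cdot([\mathbf{\Gamma}\mathbf{p}]\times\nu)$ into $[\mathbf{\Gamma}\mathbf{p}]\cdot(\nu\times\mathbf{W})$. Using the symmetry of the matrix $\mathbf{\Gamma}$, the reciprocity $\mathbf{\Gamma}(\bz_S,\bx)=\mathbf{\Gamma}(\bx,\bz_S)$ from \eqref{Pro1}, and the representation $\bU(\bz_S)=-\epsilon_0^{-1}\int_{\partial\Omega}\mathbf{\Gamma}(\bx,\bz_S)(\nu\times\mathbf{W})\,d\sigma$ coming from the definition of $\mathcal{S}^\K_E$, this integral equals $-\epsilon_0\,\bU(\bz_S)\cdot\mathbf{p}$, and the prefactor $\K^2 a_\mu\epsilon_0^{-1}$ produces the first term of \eqref{TD}. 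For the electric (permittivity) term the same triple-product and transpose manipulations (now using $[(\nabla_{\bz_S}\times\mathbf{\Gamma})^T\mathbf{q}]\cdot\mathbf{r}=\mathbf{q}\cdot[(\nabla_{\bz_S}\times\mathbf{\Gamma})\mathbf{r}]$ with $\mathbf{q}=\mathbf{M}_S^\epsilon\nabla\times\mathbf{H}_0(\bz_S)$) reduce the integral to $\mathbf{q}\cdot\int_{\partial\Omega}[\nabla_{\bz_S}\times\mathbf{\Gamma}(\bx,\bz_S)](\nu\times\mathbf{W})\,d\sigma$; since $\nu\times\mathbf{W}$ is $\bz_S$-independent, the column-wise curl passes under the integral to give exactly $-\epsilon_0\,\nabla\times\bU(\bz_S)$, and the symmetry of $\mathbf{M}_S^\epsilon$ yields the second term. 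Collecting both contributions gives \eqref{TD}.

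I expect the main obstacle to be bookkeeping rather than analytic: keeping the column-wise conventions for $\times\nu$ and $\nabla\times$ on matrices straight, correctly combining the symmetry of $\mathbf{\Gamma}$ with the reciprocity \eqref{Pro1} so that the kernel $\mathbf{\Gamma}(\bz_S,\bx)$ of \eqref{AE2} is matched to the kernel $\mathbf{\Gamma}(\bx,\bz_S)$ of $\mathcal{S}^\K_E$, and aligning the differentiation variable in $\nabla\times\bU$ with the curl $\nabla_{\bz_S}\times\mathbf{\Gamma}(\bx,\bz_S)$ in the electric term. A secondary point to state carefully is the justification that differentiating $|\mathbf{A}-\mathbf{B}_\delta|^2$ in $\delta^3$ isolates the cross term at leading order, using the uniform $O(\delta^4)$ control in \eqref{AE2} to discard both the remainder and the self-interaction contribution.
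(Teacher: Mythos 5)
Your proposal is correct and follows essentially the same route as the paper's proof: split $\mathbf{H}_\rho-\mathbf{H}_\delta$ into data and trial parts, use the asymptotic expansion \eqref{AE2} to isolate the $O(\delta^3)$ cross term (discarding the $\delta$-independent and $O(\delta^6)$ pieces), and then apply the triple-product/transpose identities together with the reciprocity \eqref{Pro1} and the representation of $\mathcal{S}^\K_E$ to recognize the two boundary integrals as $-\epsilon_0\,\bU(\bz_S)\cdot\mathbf{M}_S^\mu\mathbf{H}_0(\bz_S)$ and $-\epsilon_0\,\nabla\times\bU(\bz_S)\cdot\mathbf{M}_S^\epsilon\nabla\times\mathbf{H}_0(\bz_S)$. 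The signs and conventions you track (column-wise action of $\times\nu$, matching $\mathbf{\Gamma}(\bz_S,\bx)$ to $\mathbf{\Gamma}(\bx,\bz_S)$) are exactly the bookkeeping the paper performs in \eqref{I1}--\eqref{I2}.
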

\begin{proof}
Note that, by virtue of asymptotic expansion \eqref{AE2}, for all $\bz_S\in\Omega$  
\begin{align}
\ds \mathcal{H}_f&[\mathbf{H}_0](\bz_S)-\frac{1}{2}\int_{\partial\Omega}\Big|\left(\frac{1}{2}\mathcal{I}-\mathcal{P}^\K\right)\left[(\mathbf{H}_\rho-\mathbf{H}_0)\times\nu\right](\bz)\Big|^2d\sigma(\bz)=O\left(\delta^6\right)
\nonumber
\\
&\ds-\Re e\Bigg\{\int_{\partial\Omega} \left(\frac{1}{2}\mathcal{I}-\mathcal{P}^\K\right)\left[(\mathbf{H}_\delta-\mathbf{H}_0)\times\nu\right](\bz)\cdot \overline{\left(\frac{1}{2}\mathcal{I}-\mathcal{P}^\K\right)\left[(\mathbf{H}_\rho-\mathbf{H}_0)\times\nu\right](\bz)} d\sigma(\bz)\Bigg\},
\nonumber
\\
=&-\frac{\delta^3a_\epsilon}{\epsilon_0}\Re e\left\{\int_{\partial\Omega}
\left[\left(\nabla_{\bz_S}\times\mathbf{\Gamma}(\bz,\bz_S)\right)^T \times \nu(\bz)\right]\mathbf{M}_{S}^\epsilon
\nabla\times\mathbf{H}_0(\bz_S)\cdot \mathbf{W}(\bz)d\sigma(\bz)\right\}
\nonumber
\\
&-\ds\frac{\delta^3\K^2a_\mu}{\epsilon_0}\Re e\left\{
\int_{\partial\Omega} \mathbf{\Gamma}(\bz_S,\bz)\times\nu(\bz) \mathbf{M}_{S}^\mu\mathbf{H}_0(\bz_S)\cdot \mathbf{W}(\bz) d\sigma(\bz)\right\} + O(\delta^4), \label{expansion}
\end{align}
since $\left(\frac{1}{2}\mathcal{I}-\mathcal{P}^\K\right)[(\mathbf{H}_\delta-\mathbf{H}_0)\times \nu] =O(\delta^3)$ by Theorem \ref{Asymp}. 

Recall that for any matrix $A$, and vectors $\bu$, $\bv$ and $\bw$
$$
\left[(A\times \bu)\bv\right]\cdot\bw= \left[(A\times\bu)^T\bw\right]\cdot\bv= [A^T(\bu\times\bw)]\cdot \bv.
$$
Therefore, we have 
\begin{align*}
\left[\mathbf{\Gamma}(\bz_S,\bz)\times \nu(\bz)\right]
\mathbf{M}_{S}^\mu\mathbf{H}_0(\bz_S)\cdot \mathbf{W}(z)
=
\left[\left(\mathbf{\Gamma}(\bz_S,\bz)\right)^T\nu(\bz)\times\mathbf{W}(\bz) \right]\cdot\mathbf{M}_{S}^\mu\mathbf{H}_0(\bz_S).
\end{align*}
Moreover, from property \eqref{Pro1} and symmetry of $\mathbf{\Gamma}$,
\begin{align}
\int_{\partial\Omega}\mathbf{\Gamma}
&(\bz_S,\bz)\times\nu(\bz) \mathbf{M}_{S}^\mu\mathbf{H}_0(\bz_S)
\cdot \mathbf{W}(\bz)d\sigma(\bz)
\nonumber
\\
&=
\ds\int_{\partial\Omega}\mathbf{\Gamma}(\bz,\bz_S)\big(\nu(\bz)\times\mathbf{W}(\bz)\big)d \sigma(\bz)
\cdot \mathbf{M}_{S}^\mu\mathbf{H}_0(\bz_S)
=
-\epsilon_0\bU(\bz_S)\cdot \mathbf{M}_{S}^\mu\mathbf{H}_0(\bz_S).\label{I1}
\end{align}
Similarly, 
\begin{align}
\int_{\partial\Omega}
&
\left[\big(\nabla_{\bz_S}\times\mathbf{\Gamma}(\bz,\bz_S)\big)^T \times \nu(\bz)\right]
\mathbf{M}_{S}^\epsilon\nabla\times\mathbf{H}_0(\bz_S)
\cdot \mathbf{W}(\bz)d\sigma(\bz),
\nonumber
\\
&=\int_{\partial\Omega}\left[\big(\nabla_{\bz_S}
\times\mathbf{\Gamma} (\bz,\bz_S)\big)^T \times \nu(\bz)\right]^T\mathbf{W}(\bz)d\sigma(\bz)\cdot
\mathbf{M}_{S}^\epsilon\nabla\times\mathbf{H}_0(\bz_S),
\nonumber
\\
&=\int_{\partial\Omega}\nabla_{\bz_S}\times\mathbf{\Gamma}(\bz,\bz_S)\big[\nu(\bz)\times\mathbf{W}(\bz)\big]d\sigma(\bz)\cdot
\big[\mathbf{M}_{S}^\epsilon\nabla\times\mathbf{H}_0(\bz_S)\big],
\nonumber
\\
&=-\epsilon_0 \nabla\times\bU(\bz_S)\cdot
\mathbf{M}_{S}^\epsilon\nabla\times\mathbf{H}_0(\bz_S).\label{I2}
\end{align}

By virtue of \eqref{I1} and \eqref{I2}, expansion \eqref{expansion} renders
\begin{align}
\mathcal{H}_f[\mathbf{H}_0]&(\bz_S)-\frac{1}{2}\int_{\partial\Omega}\Big|\Big(\frac{1}{2}\mathcal{I}-\mathcal{P}^\K\Big)
\big[(\mathbf{H}_\rho-\mathbf{H}_0)\times\nu\big](\bz)\Big|^2d\sigma(\bz)
\nonumber
\\
=& \delta^3\Re e\Big\{\K^2a_\mu \bU(\bz_S)\cdot \mathbf{M}_{S}^\mu\mathbf{H}_0(\bz_S) 
+a_\epsilon\nabla\times \bU(\bz_S)\cdot \mathbf{M}_{S}^\epsilon\nabla\times\mathbf{H}_0(\bz_S)\Big\}
+O(\delta^4).\label{expansion2}
\end{align}
Finally, the conclusion follows by tending $\delta^3\to 0$. 
\end{proof}

To conclude this section, we precise that thanks to Lemma \ref{lemS} and Lemma \ref{lemJump} the back-propagator $\mathbf{U}$ is the solution to boundary value problem 
\begin{eqnarray}
\begin{cases}
\ds \nabla\times\nabla\times\mathbf{U}(\bx)-\K^2\mathbf{U}(\bx) = \mathbf{0}, & \bx\in\Omega, 
\\
\ds\nabla\times\mathbf{U}(\bx)\times\nu(\bx)= \left(\frac{1}{2}\mathcal{I}+\mathcal{P}^\K\right)\overline{\left[\nu\times\left(\frac{1}{2}\mathcal{I}-\mathcal{P}^\K\right)[(\mathbf{H}_\rho-\mathbf{H}_0)\times\nu]\right]}(\bx), & \bx\in\partial\Omega.
\end{cases}
\end{eqnarray}

\section{Sensitivity and resolution analysis} \label{sensitivity}

In this section, we explain why should the topological derivative functional attain its maximum at the true location $\bz_D$ of the electromagnetic inclusion $D$.  

\subsection{Imaging with single incident field}
In order to ascertain the localization and resolution of the imaging function $\partial_T\mathcal{H}_f[\mathbf{H}_0]$, we entertain two special cases for simplicity. Precisely, we consider the dielectric inclusions ($\mu_0=\mu_1$ but $\epsilon_0\neq \epsilon_1$) and permeable inclusions ($\mu_0\neq \mu_1$ but $\epsilon_0=\epsilon_1$) only. The general case  ($\mu_0\neq\mu_1$ and $\epsilon_0\neq \epsilon_1$) can be dealt with analogously, and the same conclusions hold but the analysis is more involved.

Consider the case of a permeable inclusion. Let $\epsilon_2=\epsilon_0$  thereby restricting $\partial_T\mathcal{H}_f[\mathbf{H}_0]$  to 
\begin{align*}
\partial_T\mathcal{H}_f[\mathbf{H}_0](\bz_S)
=& -\K^2a_\mu\Re e\left\{ \bU(\bz_S)\cdot \mathbf{M}_{S}^\mu \mathbf{H}_0(\bz_S)\right\},
\nonumber
\\
=& {\K^2\epsilon_0^{-1}a_\mu}\Re e\left\{\int_{\partial\Omega}\mathbf{\Gamma}(\bz,\bz_S)\left(\nu(\bz)\times\mathbf{W}(\bz)\right)d\sigma(\bz)\cdot \mathbf{M}_{S}^\mu \mathbf{H}_0(\bz_S)\right\}.
\end{align*}

Note that 
\begin{align*}
\mathbf{W}(\bz)
=&\overline{\left(\frac{1}{2}\mathcal{I}-\mathcal{P}^\K\right)
\left[(\mathbf{H}_\rho-\mathbf{H}_0)\times\nu\right](\bz)},
\\
=&
\rho^3{\K^2\epsilon_0^{-1}(\mu_{1r}-1)} \left[\overline{\mathbf{\Gamma}(\bz_D, \bz)}\times\nu(\bz)\right]\mathbf{M}_{D}^\mu\overline{\mathbf{H}_0(\bz_D)}+O(\rho^4).
\end{align*}

Therefore, on injecting back the expression for $\mathbf{W}$, we obtain
\begin{equation}
\partial_T\mathcal{H}_f[\mathbf{H}_0](\bz_S):= \rho^3\K^4\epsilon_0^{-2}C_\mu \Re e\left\{ \mathbf{M}_{S}^\mu\mathbf{H}_0(\bz_S)\cdot \overline{\mathcal{R}_1(\bz_S,\bz_D)}\mathbf{M}_{D}^\mu \overline{\mathbf{H}_0(\bz_D)}\right\}
+O(\rho^4),
\label{TDmu}
\end{equation}
where 
\begin{eqnarray}
C_\mu &:=&(\mu_{1r}-1)(\mu_{2r}-1).\label{Cmu}
\\
\label{R1}
\mathcal{R}_1(\bx,\by)&:=& \int_{\partial\Omega}\Big(\overline{\mathbf{\Gamma}(\bx,\bz)\times\nu(\bz)}\Big)^T\Big(\mathbf{\Gamma}(\by,\bz)\times\nu(\bz)\Big)d\sigma(\bz).
\end{eqnarray}

Recall from Lemma \ref{HKI-lem2}, that for all $\bx,\by\in \Omega$ far from the boundary $\partial\Omega$,  we have  
\begin{align}
\mathcal{R}_1(\bx,\by)\simeq & -{\epsilon_0}\K^{-1}\Im m\big\{\mathbf{\Gamma}(\bx,\by)\big\} 
\end{align}
Therefore, by substituting back the approximation of $\mathcal{R}_1$, we arrive at
\begin{align}
\partial_T\mathcal{H}_f[\mathbf{H}_0](\bz_S)
\simeq -{\rho^3\K^3\epsilon_0^{-1}C_\mu}\Re e
\left\{
\mathbf{M}_{S}^\mu\mathbf{H}_0(\bz_S)\cdot
\Im m\big\{\mathbf{\Gamma}(\bz_S,\bz_D)\big\}
\mathbf{M}_{D}^\mu\overline{\mathbf{H}_0(\bz_D)}
\right\}
+O(\rho^4).
\end{align}

On the other hand, if the inclusion is dielectric, that is, $\mu_1=\mu_0$ and we let $\mu_2=\mu_0$, the topological derivative reduces to 
\begin{align*}
\partial_T\mathcal{H}_f[\mathbf{H}_0](\bz_S)
=&-a_\epsilon\Re e
\left\{ 
\nabla\times \bU(\bz_S)\cdot \mathbf{M}_{S}^\epsilon
\nabla\times\mathbf{H}_0(\bz_S)
\right\},
\\
=&
\epsilon_0^{-1}a_\epsilon\Re e
\left\{
 \nabla\times \int_{\partial\Omega}\mathbf{\Gamma}(\bz,\bz_S)(\nu(\bz)\times\mathbf{W}(\bz)d\sigma(\bz)
\cdot \mathbf{M}_{S}^\epsilon \nabla\times\mathbf{H}_0(\bz_S)
\right\}.
\end{align*} 

In this case,  $\mathbf{W}$ admits the expansion
\begin{align*}
\mathbf{W}(\bz)
=
\rho^3\epsilon_0^{-1}(\epsilon_{1r}-1)
\left[\overline{\big(\nabla_{\bz_D}\times\mathbf{\Gamma}(\bz,\bz_D)\big)^T}\times\nu(\bz)\right]
\mathbf{M}_{D}^\epsilon\overline{\nabla\times\mathbf{H}_0(\bz_D)}+O(\rho^4).
\end{align*}
Therefore, $\partial_T\mathcal{H}_f[\mathbf{H}_0]$ becomes 
\begin{align}
\partial_T\mathcal{H}_f[\mathbf{H}_0](\bz_S)
=& \rho^3\epsilon_0^{-2}C_\epsilon\Re e\Bigg\{\int_{\partial\Omega} \nabla_{\bz_S}\times \mathbf{\Gamma}(\bz,\bz_S)\Big[\nu(\bz)\times\Big(\overline{
\left(\nabla_{\bz_D}\times\mathbf{\Gamma}(\bz,\bz_D)\right)^T}\times\nu(\bz)
\nonumber
\\
&
\qquad\qquad
\mathbf{M}_{D}^\epsilon\overline{\nabla\times
\mathbf{H}_0(\bz_D)}\Big)\Big]d\sigma(\bz)\cdot
\mathbf{M}_{S}^\epsilon\nabla\times\mathbf{H}_0(\bz_S)\Bigg\}+ O(\rho^4),
\nonumber
\\
=& \rho^3\epsilon_0^{-2}C_\epsilon\Re e\Bigg\{\int_{\partial\Omega} \left(\nabla_{\bz}\times \mathbf{\Gamma}(\bz_S,\bz)\right)^T
\Big[\nu(\bz)\times\Big(\overline{
\left(\nabla_{\bz}\times\mathbf{\Gamma}(\bz_D,\bz)\right)}\times\nu(\bz)
\nonumber
\\
&
\qquad\qquad\mathbf{M}_{D}^\epsilon\overline{\nabla
\times\mathbf{H}_0(\bz_D)}\Big)\Big]d\sigma(\bz)\cdot
\mathbf{M}_{S}^\epsilon\nabla\times\mathbf{H}_0(\bz_S)\Bigg\}+ O(\rho^4),
\nonumber
\\
=&\rho^3\epsilon_0^{-2}C_\epsilon \Re e\left\{ \mathbf{M}_{S}^\epsilon\nabla \times\mathbf{H}_0(\bz_S) \cdot
\overline{\mathcal{R}_2(\bz_S,\bz_D)} \mathbf{M}_{D}^\epsilon
\overline{\nabla\times\mathbf{H}_0(\bz_D)}\right\} 
+ O(\rho^4),\label{TDepsilon}
\end{align}
where   
\begin{eqnarray}
C_\epsilon&:=&(\epsilon_{1r}-1)(\epsilon_{2r}-1).\label{Cepsilon}
\\
\label{R2}
\mathcal{R}_2(\bx,\by)&:=& \int_{\partial\Omega}\Big(\overline{\nabla_\bz\times \mathbf{\Gamma}(\bx,\bz)\times\nu(\bz)}\Big)^T\Big(\nabla_\bz\times\mathbf{\Gamma}(\by,\bz)\times\nu(\bz)\Big)d\sigma(\bz).
\end{eqnarray}

Note that, by virtue of the assumption \eqref{assumption}, and the Silver-M\"uller condition, for all $\bx,\by\in\Omega$ away from boundary $\partial\Omega$
\begin{align}
\mathcal{R}_2(\bx,\by)\simeq& \K^2 \int_{\partial\Omega}\Big(\overline{\mathbf{\Gamma}(\bx,\bz)}\Big)^T\Big(\mathbf{\Gamma}(\by,\bz)\Big)d\sigma(\bz)
\simeq 
-{\K}{\epsilon_0}\Im m\Big\{\mathbf{\Gamma}(\bx,\by)\Big\},
\end{align}
where for the latter identity, Lemma \ref{HKI-lem1} is invoked.
Therefore, we conclude that 
\begin{align}
\partial_T\mathcal{H}_f[\mathbf{H}_0](\bz_S)
\simeq-\frac{\rho^3\K C_\epsilon}{\epsilon_0}\Re e\left\{\mathbf{M}_{S}^\epsilon\nabla \times\mathbf{H}_0(\bz_S) \cdot
\Im m\big\{\mathbf{\Gamma}(\bz_S,\bz_D)\big\} \mathbf{M}_{D}^\epsilon\overline{\nabla \times\mathbf{H}_0(\bz_D)} \right\}+ O(\rho^4).
\end{align}

\subsubsection{Sign and decay properties of topological derivative}

For both dielectric and permeable inclusions, 
\begin{eqnarray}
\partial_T\mathcal{H}_f[\mathbf{H}_0](\bz_S)\propto \Im m\big\{\mathbf{\Gamma}(\bz_S,\bz_D)\big\}= -\frac{\epsilon_0\K}{4\pi}\left[\frac{2}{3}j_0(\K r)\mathbf{I}_3+j_2(\K r)\left(\widehat{\mathbf{r}}\,\widehat{\mathbf{r}}^T-\frac{1}{3}\mathbf{I}_3\right)\right],\label{asymptoticTD}
\end{eqnarray}
where $j_0$ and $j_2$ are the spherical Bessel functions of first kind and $\mathbf{r}:=\bz_S-\bz_D$ with $r:=|\mathbf{r}|$ and $\hat{\mathbf{r}}=\mathbf{r}/r$. Since $j_n(kr)=O(1/kr)$ as $kr\to \infty$ (see, for instance \cite[10.52.3]{NIST}), the functional $\bz_S\to \partial_T\mathcal{H}_f[\mathbf{H}_0](\bz_S)$ rapidly decays for $\bz_S$ away from $\bz_D$ and has a sharp peak when $\bz_S\to\bz_D$ with a focal spot size of half a wavelength of the incident wave. Therefore, the resolution of the imaging functional $\bz_S\to\partial_T\mathcal{H}_f[\mathbf{H}_0](\bz_S)$  achieves the Rayleigh resolution limit. 
Moreover, it synthesizes the sensitivity of $\mathcal{H}_f[\mathbf{H}_0](\bz_S)$ relative to the insertion of an inclusion at the search location $\bz_S\in\Omega$. Heuristically, if the contrasts $(\epsilon_{2r}-1)$ and $(\mu_{2r}-1)$ have the same signs as $(\epsilon_{1r}-1)$ and $(\mu_{1r}-1)$ respectively, then the functional $\mathcal{H}_f[\mathbf{H}_0](\bz_S)$ must observe the most pronounced decrease at the potential candidate $\bz_S\in\Omega$ for the true location $\bz_D$. In other words, $\bz_S\to \frac{\partial\mathcal{H}_f[\mathbf{H}_0]}{\partial(\delta^3)}(\bz_S)$ is expected to attain its most pronounced negative value; refer, for instance, to \cite{BG, GuzinaChik, Bellis} for detailed discussions on sign heuristic.

Notice that both $C_\mu$ and $C_\epsilon$ are positive if the contrasts of true and trial inclusions have same signs. Consequently, by virtue of the decay property, $\partial_T\mathcal{H}_f[\mathbf{H}_0](\bz_S)=-\frac{\partial\mathcal{H}_f[\mathbf{H}_0]}{\partial(\delta^3)}(\bz_S)$
assumes its maximum positive value for both dielectric and permeable inclusions when $\bz_S\to\bz_D$. Thus, the functional  $\partial_T\mathcal{H}_f[\mathbf{H}_0](\bz_S)$  possesses a sharpest decay  and the functional $\mathcal{H}_f[\mathbf{H}_0](\bz_S)$ decreases rapidly when the contrasts are chosen to have same signs.

\subsection{Imaging with multiple incident fields} \label{MultiSec}

Let $\theta_1, \theta_2, \cdots ,\theta_n$ be $n-$equidistributed  directions on the unit sphere and let 
\begin{eqnarray}
\mathbf{H}_0^{j,\ell}(\bx)= \theta_j^{\perp,\ell} e^{i\K\theta_j^T\bx}, \qquad j\in\{1,2,\cdots,n\},\quad  \ell\in\{1,2\}\label{Hj}
\end{eqnarray}
be the incident magnetic fields where $\theta^{\perp,\ell}_j$ are the polarization directions such that $(\theta_j,\theta^{\perp,1}_j,\theta^{\perp,2}_j)$ forms an orthonormal basis of $\RR^3$. The incident fields $\mathbf{H}_0^{j,\ell}$ are the solutions to the Maxwell equations 
\begin{eqnarray}
\begin{cases}
\ds\nabla\times\nabla \times \mathbf{H}_0^{j,\ell}-\K^2\mathbf{H}_0^{j,\ell} = \mathbf{0}, & \Omega, 
\\
\ds\mathbf{h}^{j,\ell}(\bx)= {\epsilon_0^{-1}}\nabla\times \left(\theta_j^{\perp,\ell} e^{i\K\theta_j^T\bx}\right)\times \nu(\bx), & \partial\Omega.
\end{cases}
\end{eqnarray}

We recall  that for $n$ sufficiently large 
\begin{eqnarray}
\frac{1}{n}\sum_{j=1}^n e^{i\K\theta_j^T(\bx-\by)}\simeq \frac{4\pi}{\K}\Im m\big\{g(\bx,\by)\big\}.
\end{eqnarray}
Since $(\theta_j, \theta_j^{\perp,1},\theta_j^{\perp,2})$ form a basis of $\RR^3$, therefore  
$ \theta_j^{\perp,1}(\theta_j^{\perp,1})^T+\theta_j^{\perp,2}(\theta_j^{\perp,2})^T=(\mathbf{I}_3-\theta_j\theta_j^T) $
so that 
\begin{align}
\nonumber
\frac{1}{n}\sum_{\ell=1}^2\sum_{j=1}^n  e^{i\K\theta_j^T(\bx-\by)} \theta_j^{\perp,\ell} \left(\theta_j^{\perp,\ell}\right)^T
=& \frac{1}{n}\sum_{j=1}^n \left(\mathbf{I}_3-\theta_j\theta_j^T\right)
 e^{i\K\theta_j^T(\bx-\by)},
\\
\nonumber
= &\frac{1}{n} \sum_{j=1}^n\left(\mathbf{I}_3-\frac{1}{\K^2}\nabla\nabla^T\right) e^{i\K\theta_j^T(\bx-\by)},
\\
\nonumber
= & \left(\mathbf{I}_3-\frac{1}{\K^2}\nabla\nabla^T\right) \frac{1}{n} \sum_{j=1}^n e^{i\K\theta_j^T(\bx-\by)},
\\
=& 
-\frac{4\pi}{\K\epsilon_0}\Im m\big\{\mathbf{\Gamma}(\bx,\by)\big\}.\label{theta1}
\end{align}
Similarly,  
\begin{align}
\frac{1}{n}\sum_{\ell=1}^2\sum_{j=1}^n  e^{i\K\theta_j^T(\bx-\by)} \left(\theta_j\times \theta_j^{\perp,\ell}\right)\left(\theta_j\times \theta_j^{\perp,\ell}\right)^T
\simeq 
-\frac{4\pi}{\K\epsilon_0}\Im m\big\{\mathbf{\Gamma}(\bx,\by)\big\}.\label{theta2}
\end{align}

Let us define the topological derivative for multiple incident fields by  
\begin{equation}\label{MultiFunc}
\ds\partial_T\mathcal{H}_f(\bz_S):= \frac{1}{n}\sum_{\ell=1}^2\sum_{j=1}^n
\partial_T\mathcal{H}_f[\mathbf{H}_0^{j,\ell}](\bz_S).
\end{equation}

The following result holds
\begin{thm}\label{multiResolution}
Let $\bz_S\in\Omega$, $D=\rho B_D +\bz_D$ satisfy \eqref{assumption} and $n\in\mathbb{N}$ be sufficiently large. Then, 
\begin{enumerate}
\item for a permeable inclusion ($\epsilon_0=\epsilon_1=\epsilon_2$)
\begin{eqnarray}
\partial_T\mathcal{H}_f(\bz_S) \simeq 
\frac{4\pi \rho^3\K^2 C_\mu}{\epsilon_0^2}\Re e\Big\{\Im m\big\{\mathbf{\Gamma}(\bz_S,\bz_D)\big\}\mathbf{M}_{D}^\mu:
\mathbf{M}_{S}^\mu\Im m\big\{\mathbf{\Gamma}(\bz_S,\bz_D)\big\}\Big\} +O(\rho^4).\label{TDmulti1}
\end{eqnarray}
\item  for a dielectric inclusion ($\mu_0=\mu_1=\mu_2$)
\begin{eqnarray}
\partial_T\mathcal{H}_f(\bz_S)
\simeq \frac{4\pi\rho^3\K^2 C_\epsilon}{\epsilon_0^2}  \Re e\Big \{ \Im m\big\{\mathbf{\Gamma}(\bz_S,\bz_D)\big\} \mathbf{M}_{D}^\epsilon:
\mathbf{M}_{S}^\epsilon\Im m\big\{\mathbf{\Gamma}(\bz_S,\bz_D)\big\} 
\Big\}
+ O(\rho^4).\label{TDmulti2}
\end{eqnarray}
The constants $C_\mu$ and $C_\epsilon$ are defined by \eqref{Cmu} and \eqref{Cepsilon} respectively.
\end{enumerate}
\end{thm}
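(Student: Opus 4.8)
The plan is to average the two single-measurement formulas established earlier in this subsection over the illuminations $\mathbf{H}_0^{j,\ell}$ and then to collapse the resulting sums over directions and polarizations by means of the completeness identities \eqref{theta1} and \eqref{theta2}. I would treat the permeable and dielectric cases in parallel, since they differ only in which of these two identities is invoked and in whether the raw field or its curl appears.

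For the permeable case I substitute $\mathbf{H}_0^{j,\ell}(\bx)=\theta_j^{\perp,\ell}e^{i\K\theta_j^T\bx}$ into the single-field expansion. As $\theta_j^{\perp,\ell}$ is real, $\overline{\mathbf{H}_0^{j,\ell}(\bz_D)}=\theta_j^{\perp,\ell}e^{-i\K\theta_j^T\bz_D}$, so the bilinear scalar in the integrand becomes $(\theta_j^{\perp,\ell})^T\mathbf{M}_{S}^\mu\,\Im m\{\mathbf{\Gamma}(\bz_S,\bz_D)\}\mathbf{M}_{D}^\mu\theta_j^{\perp,\ell}\,e^{i\K\theta_j^T(\bz_S-\bz_D)}$, where I have used the symmetry of $\mathbf{M}_{S}^\mu$ from Lemma \ref{PT}. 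Invoking the elementary identity $\mathbf{v}^T A\mathbf{v}=A:(\mathbf{v}\mathbf{v}^T)$ for the symmetric dyad $\mathbf{v}\mathbf{v}^T$, I rewrite this as $\big[\mathbf{M}_{S}^\mu\,\Im m\{\mathbf{\Gamma}(\bz_S,\bz_D)\}\mathbf{M}_{D}^\mu\big]:\big[\theta_j^{\perp,\ell}(\theta_j^{\perp,\ell})^T e^{i\K\theta_j^T(\bz_S-\bz_D)}\big]$. Averaging over $j$ and $\ell$ and applying \eqref{theta1} replaces the second factor by $-\tfrac{4\pi}{\K\epsilon_0}\Im m\{\mathbf{\Gamma}(\bz_S,\bz_D)\}$; combining the single-field prefactor $-\rho^3\K^3\epsilon_0^{-1}C_\mu$ with this $-\tfrac{4\pi}{\K\epsilon_0}$ produces exactly the constant $\tfrac{4\pi\rho^3\K^2 C_\mu}{\epsilon_0^2}$ of \eqref{TDmulti1}.

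The dielectric case is identical in structure, with $\Im m\{\mathbf{\Gamma}\}$ now contracted against the curls of the incident fields. Here I first compute $\nabla\times\mathbf{H}_0^{j,\ell}(\bx)=i\K(\theta_j\times\theta_j^{\perp,\ell})e^{i\K\theta_j^T\bx}$, so that the product of the curl at $\bz_S$ with the conjugated curl at $\bz_D$ yields the factor $\K^2(\theta_j\times\theta_j^{\perp,\ell})(\theta_j\times\theta_j^{\perp,\ell})^T e^{i\K\theta_j^T(\bz_S-\bz_D)}$. Averaging and invoking the second completeness identity \eqref{theta2} again gives $-\tfrac{4\pi}{\K\epsilon_0}\Im m\{\mathbf{\Gamma}(\bz_S,\bz_D)\}$, and the extra $\K^2$ supplied by the two curls exactly compensates the lower power of $\K$ in the dielectric single-field prefactor $-\rho^3\K\epsilon_0^{-1}C_\epsilon$, so the same constant $\tfrac{4\pi\rho^3\K^2 C_\epsilon}{\epsilon_0^2}$ appears in \eqref{TDmulti2}.

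It remains to recast $\big[\mathbf{M}_{S}\,\Im m\{\mathbf{\Gamma}\}\mathbf{M}_{D}\big]:\Im m\{\mathbf{\Gamma}\}$ into the symmetric form $\Im m\{\mathbf{\Gamma}\}\mathbf{M}_{D}:\mathbf{M}_{S}\,\Im m\{\mathbf{\Gamma}\}$ displayed in the statement; since $A:B=\mathrm{tr}(A^T B)$ and the polarization tensors are real symmetric by Lemma \ref{PT} while $\Im m\{\mathbf{\Gamma}(\bz_S,\bz_D)\}$ is symmetric by the reciprocity \eqref{Pro1}, both contractions equal $\mathrm{tr}(\mathbf{M}_{D}\,\Im m\{\mathbf{\Gamma}\}\mathbf{M}_{S}\,\Im m\{\mathbf{\Gamma}\})$ and hence coincide. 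The only genuine bookkeeping obstacle is keeping straight which completeness relation is called for: the raw field enters through $\theta_j^{\perp,\ell}(\theta_j^{\perp,\ell})^T$ and therefore requires \eqref{theta1}, whereas its curl enters through $(\theta_j\times\theta_j^{\perp,\ell})(\theta_j\times\theta_j^{\perp,\ell})^T$ and requires \eqref{theta2}; once this is recognized the computation is routine, the error being governed by the $O(\rho^4)$ remainder already present in the single-field formulas together with the large-$n$ approximation error in \eqref{theta1}--\eqref{theta2}.
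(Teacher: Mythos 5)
Your proposal is correct and follows essentially the same route as the paper: substitute the plane-wave illuminations into the single-field formulas, convert the bilinear scalar to a contraction against the dyad $\theta_j^{\perp,\ell}(\theta_j^{\perp,\ell})^T$ (resp.\ $(\theta_j\times\theta_j^{\perp,\ell})(\theta_j\times\theta_j^{\perp,\ell})^T$), and collapse the sums via \eqref{theta1} and \eqref{theta2}, with the constants matching as you verify. The closing remark on the equivalence of the two contraction orderings via symmetry of the polarization tensors and of $\Im m\{\mathbf{\Gamma}\}$ is a detail the paper leaves implicit, but it is correct.
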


\begin{proof}
When $\epsilon_0=\epsilon_1=\epsilon_2$, for all $\bz_S\in\Omega$ 
\begin{align*}
\partial_T&\mathcal{H}_f(\bz_S)
=\frac{1}{n}\sum_{\ell=1}^2\sum_{j=1}^n  \partial_T\mathcal{H}_f[\mathbf{H}_0^{j,\ell}](\bz_S),
\\
= &- \frac{\rho^3\K^3C_\mu}{\epsilon_0 n}\sum_{\ell=1}^2\sum_{j=1}^n \Re e\Big\{ \mathbf{M}_{S}^\mu\mathbf{H}_0^{j,\ell}(\bz_S)\cdot\Im m\big\{\mathbf{\Gamma}(\bz_S,\bz_D)\big\}\mathbf{M}_{D}^\mu\overline{\mathbf{H}_0^{j,\ell}(\bz_D)}\Big\}
+O(\rho^4),
\\
= &- \frac{\rho^3\K^3C_\mu}{\epsilon_0}\Re e\Big\{\Im m\big\{\mathbf{\Gamma}(\bz_S,\bz_D)\big\}{\mathbf{M}_{D}^\mu}:\mathbf{M}_{S}^\mu\Big[\frac{1}{n}\sum_{\ell=1}^2\sum_{j=1}^n\theta_j^{\perp,\ell}\left(\theta_j^{\perp,\ell}\right)^T e^{i\K\theta_j^T(\bz_S-\bz_D)}\Big]
\Big\}
+O(\rho^4).
\end{align*}
Here we have made use of the fact that $\theta_j\cdot\mathbf{A}\theta_j= \theta_j\theta_j^T:\mathbf{A}$. 
Finally, \eqref{TDmulti1} follows immediately by virtue of \eqref{theta1}.

In order to prove the other identity, we proceed in the similar fashion. Consider
\begin{align*}
\partial_T&\mathcal{H}_f(\bz_S)
=\frac{1}{n}\sum_{\ell=1}^2\sum_{j=1}^n  \partial_T\mathcal{H}_f[\mathbf{H}_0^{j,\ell}](\bz_S),
\\
=& -\frac{\rho^3\K C_\epsilon}{\epsilon_0 n}\sum_{\ell=1}^2\sum_{j=1}^n\Re e
\Big\{ 
\mathbf{M}_{S}^\epsilon \nabla \times\mathbf{H}^{j,\ell}_0(\bz_S)\cdot 
\Im m\big\{\mathbf{\Gamma}(\bz_S,\bz_D)\big\} \mathbf{M}_{D}^\epsilon\overline{\nabla\times
\mathbf{H}_0^{j,\ell}(\bz_D)}
\Big\}
+ O(\rho^4),
\\
=& - \frac{\rho^3\K^3C_\epsilon}{\epsilon_0 n}\sum_{\ell=1}^2\sum_{j=1}^n\Re e
\Big\{ \mathbf{M}_{S}^\epsilon\left(\theta_j\times\theta_j^{\perp,\ell}\right)\cdot 
\Im m\big\{\mathbf{\Gamma}(\bz_S,\bz_D)\big\} \mathbf{M}_{D}^\epsilon\left(\theta_j\times\theta_j^{\perp,\ell}\right) e^{i\K\theta_j\cdot(\bz_S-\bz_D)} 
\Big\}
+ O(\rho^4),
\end{align*}

On further simplification, we arrive at 
\begin{align*}
\partial_T\mathcal{H}_f(\bz_S)
=& - \frac{\rho^3\K^3 C_\epsilon}{\epsilon_0}\Re e
\Big\{ 
\Im m\big\{\mathbf{\Gamma}(\bz_S,\bz_D)\big\}{\mathbf{M}_{D}^\epsilon}:\mathbf{M}_{S}^\epsilon
\\
&
\qquad \qquad
\frac{1}{n}\sum_{\ell=1}^2\sum_{j=1}^n \left(\theta_j\times\theta_j^{\perp,\ell} \right)\left(\theta_j\times\theta_j^{\perp,\ell}\right)^T
e^{i\K\theta_j\cdot(\bz_S-\bz_D)} 
\Big\}
+ O(\rho^4),
\end{align*}
The proof is completed by invoking approximation \eqref{theta2}. 
\end{proof}

As an immediate consequence of  Theorem \ref{multiResolution} and Lemma \ref{PT}, the following result can be readily proved.

\begin{cor}\label{cor}
Let $\bz_S\in\Omega$, $D=\rho B_D +\bz_D$ be an open sphere in $\RR^3$ such that  condition \eqref{assumption} holds and $n\in\mathbb{N}$ be sufficiently large. Then, 
\begin{enumerate}
\item for a permeable inclusion ($\epsilon_0=\epsilon_1=\epsilon_2$)
\begin{align}
\partial_T\mathcal{H}_f(\bz_S) \simeq & \rho^3\K^2\widetilde{C}_\mu\left\|\Im m\big\{\mathbf{\Gamma}(\bz_S,\bz_D)\big\}\right\|^2 +O(\rho^4).
\end{align}
\item  for a dielectric inclusion ($\mu_0=\mu_1=\mu_2$) 
\begin{align}
\partial_T\mathcal{H}_f(\bz_S) \simeq & \rho^3\K^2\widetilde{C}_\epsilon\left\|\Im m\big\{\mathbf{\Gamma}(\bz_S,\bz_D)\big\}\right\|^2 +O(\rho^4).
\end{align}
The constants $\widetilde{C}_\mu$ and $\widetilde{C}_\epsilon$ are defined by 
\begin{eqnarray}
\widetilde{C}_\mu:= \frac{36\pi\mu_1\mu_2 C_\mu}{\epsilon_0^2(2\mu_0+\mu_1)(2\mu_0+\mu_2)}|B_D|\,|B_S|,
\quad
\widetilde{C}_\epsilon:=\frac{36\pi\epsilon_1\epsilon_2 C_\epsilon}{\epsilon_0^2(2\epsilon_0+\epsilon_1)(2\epsilon_0+\epsilon_2)}|B_D|\,|B_S|.
\end{eqnarray}
\end{enumerate}
\end{cor}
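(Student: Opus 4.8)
The plan is to specialize the general resolution formulas of Theorem \ref{multiResolution} to spherical inclusions by substituting the explicit polarization tensors furnished by Lemma \ref{PT}. Since the permeable and dielectric cases are structurally identical, I would carry them out in parallel, treating the second as the first under the substitution $\mu\mapsto\epsilon$.

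First I would invoke Lemma \ref{PT}: for a ball $X$ the polarization tensor is isotropic, $\mathbf{M}_X(k)=\frac{3}{2k+1}|X|\mathbf{I}_3$. Inserting the relevant contrasts and unwinding the \emph{Nota Bene} abbreviations $\mu_{1r}=\mu_0/\mu_1$ and $\mu_{2r}=\mu_0/\mu_2$, this yields the scalar matrices
\[
\mathbf{M}_D^\mu=\frac{3\mu_1}{2\mu_0+\mu_1}|B_D|\,\mathbf{I}_3,\qquad
\mathbf{M}_S^\mu=\frac{3\mu_2}{2\mu_0+\mu_2}|B_S|\,\mathbf{I}_3,
\]
together with the analogous expressions for $\mathbf{M}_D^\epsilon$ and $\mathbf{M}_S^\epsilon$ obtained by replacing $(\mu_0,\mu_1,\mu_2)$ with $(\epsilon_0,\epsilon_1,\epsilon_2)$.

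Second I would feed these into the double contraction appearing in \eqref{TDmulti1} and \eqref{TDmulti2}. Abbreviating $\mathbf{M}_D^\mu=\alpha_D\mathbf{I}_3$ and $\mathbf{M}_S^\mu=\alpha_S\mathbf{I}_3$, the key observation is that an isotropic tensor acts as a scalar on either side of $\Im m\{\mathbf{\Gamma}\}$, so that
\[
\Im m\{\mathbf{\Gamma}\}\mathbf{M}_D^\mu:\mathbf{M}_S^\mu\Im m\{\mathbf{\Gamma}\}
=\alpha_D\alpha_S\big(\Im m\{\mathbf{\Gamma}\}:\Im m\{\mathbf{\Gamma}\}\big)
=\alpha_D\alpha_S\,\|\Im m\{\mathbf{\Gamma}\}\|^2,
\]
by the definition of the Frobenius norm. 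Since $\Im m\{\mathbf{\Gamma}\}$ is a real matrix and $\alpha_D,\alpha_S>0$ (because $\mu_0,\mu_1,\mu_2>0$), the leading term is real and the operation $\Re e\{\cdot\}$ in \eqref{TDmulti1} becomes vacuous.

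Finally I would collect constants: multiplying the prefactor $4\pi\rho^3\K^2 C_\mu/\epsilon_0^2$ from Theorem \ref{multiResolution} by $\alpha_D\alpha_S=9\mu_1\mu_2|B_D|\,|B_S|/[(2\mu_0+\mu_1)(2\mu_0+\mu_2)]$ reproduces exactly $\rho^3\K^2\widetilde{C}_\mu$ with $\widetilde{C}_\mu$ as stated, and the dielectric estimate follows verbatim under $\mu\mapsto\epsilon$. There is no substantial obstacle; the only point requiring care is the contraction identity above, where one must track which side each isotropic tensor multiplies $\Im m\{\mathbf{\Gamma}\}$ so that the product genuinely collapses to $\alpha_D\alpha_S\|\Im m\{\mathbf{\Gamma}\}\|^2$ rather than to a different bilinear form, and the bookkeeping of the numerical factors $3\cdot 3=9$ versus the stated $36\pi$.
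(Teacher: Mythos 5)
Your proposal is correct and is exactly the route the paper takes: the corollary is stated there as an immediate consequence of Theorem \ref{multiResolution} and Lemma \ref{PT}, obtained by substituting the isotropic ball polarization tensors $\mathbf{M}_{B}(k)=\tfrac{3}{2k+1}|B|\mathbf{I}_3$ (with $k=\mu_0/\mu_1$, etc.), collapsing the double contraction to $\alpha_D\alpha_S\|\Im m\{\mathbf{\Gamma}\}\|^2$, and collecting $4\pi\cdot 9=36\pi$ in the constant. Your bookkeeping of the contrasts and of the reality of $\Im m\{\mathbf{\Gamma}\}$ (which makes $\Re e\{\cdot\}$ vacuous) matches the intended argument.
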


In rest of this paper, we analyze the stability of the multi-incidence imaging functional \eqref{MultiFunc} with respect to medium and measurement noises. 

\section{Statistical stability with respect to measurement noise}\label{measNoise}

The aim here is to substantiate that the imaging functional proposed in Section \ref{MultiSec} is stable with respect to additive measurement noise. For brevity, the simplest model of the measurement noise is entertained. Precisely, it is assumed that the accurate value of magnetic field at the boundary is corrupted by a mean-zero circular Gaussian noise $\etab:\partial\Omega\to\mathbb{C}^3$, with covariance $\sigma^2_{\rm noise}$, that is, 
\begin{eqnarray}
\mathbf{H}_\rho(\bz):= \mathbf{H}^{\rm true}_\rho(\bz)+\etab(\bz), \quad\quad \bz\in\partial\Omega,
\end{eqnarray}
where $\mathbf{H}_\rho$ is the corrupted value of the magnetic field at the boundary.
\newline
\textbf{Nota Bene.}\quad 
In the sequel, $\mathbb{E}$ denotes the expectation with respect to the statistics of the noise. In this section, a superposed \emph{true} indicates the true value of a quantity, that is, the value without noise corruption.

We assume that $\etab$ satisfies following five properties. 
\begin{enumerate}
\item The measurement noises at different locations on the boundary are uncorrelated. 

\item The different components of the measurement noise are uncorrelated. 

\item The real and imaginary parts of the measurement noise are uncorrelated. 

\item The measurement  noises corresponding to two different incident waves are uncorrelated.

\item All the noises corresponding to individual measurements have same variance $\sigma^2_{\rm noise}$.
\end{enumerate}

Then, under aforementioned assumptions, we have
\begin{eqnarray}
\mathbb{E}\left[\etab(\by)\overline{\etab(\by')}^T\right]&=&\sigma_{\rm noise}^2\delta_\by(\by')\mathbf{I}_3,\label{EE}
\\
\mathbb{E}\left[\etab^j(\by)\overline{\etab^{j'}(\by')}^T\right]&=&\sigma_{\rm noise}^2\delta_{jj'}\delta_\by(\by')\mathbf{I}_3,\label{EEcor}
\end{eqnarray}
where superposed $j$ and $j'$ indicate respectively the $j-$th and $j'-$th measurements and $\delta_{jj'}$ is the Kronecker's delta function which assumes the value $1$ when $j=j'$ and zero otherwise.

The imaging functional $\partial_T\mathcal{H}_f(\bz)$ is mainly affected by the additive noise during the back-propagation step due to the construction of back-propagator  in terms of the measurements at the boundary. In the presence of measurement noise, for all $\bz\in\Omega$ the back-propagator takes on the form
\begin{align}
\mathbf{U}(\bz)=& -\frac{1}{\epsilon_0} \int_{\partial\Omega}\mathbf{\Gamma}(\bx,\bz)
\nu(\bx)\times 
\overline{\left(\frac{1}{2}\mathcal{I}-\mathcal{P}^\K\right)\left[\left(\mathbf{H}^{\rm true}_\rho-\mathbf{H}_0+\etab\right)\times\nu\right](\bx)}d\sigma(\bx),
\nonumber
\\
=& \mathbf{U}^{\rm true}(\bz)+ \mathbf{U}^{\rm noise}(\bz),
\end{align}
where $\mathbf{U}^{\rm true}$ corresponds to the back-propagation of the noise-free data whereas $\mathbf{U}^{\rm noise}$ corresponds to noise back-propagation and is given by 
\begin{eqnarray}
\label{Unoise} 
\mathbf{U}^{\rm noise}(\bz):= -\frac{1}{\epsilon_0}\int_{\partial\Omega} \mathbf{\Gamma}(\bx,\bz)\nu(\bx)\times\overline{\left(\frac{1}{2}\mathcal{I}-\mathcal{P}^\K\right)\left[\etab\times\nu\right](\bx)} d\sigma(\bx).
\end{eqnarray}

Let us now discuss the statistics of $\mathbf{U}^{\rm noise}(\bz)$. We have the following lemma. 

\begin{lem} \label{lemUnoise}
The random field $\mathbf{U}^{\rm noise}(\bz)$, $\bz\in\Omega$, is a mean zero Gaussian field with covariance 
\begin{align}
\label{CovUnoise}
\mathbb{E}\Big[\mathbf{U}^{\rm nosise}(\bz)
\overline{\mathbf{U}^{\rm nosise}(\bz')}^T\Big]
\simeq-{\sigma^2_{\rm noise}}{(4\K\epsilon_0)^{-1}}\Im m\big\{\mathbf{\Gamma}(\bz,\bz')\big\}, \qquad \forall \bz,\bz'\in\Omega.
\end{align}
\end{lem}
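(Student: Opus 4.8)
The plan is to establish the two claims (mean-zero Gaussianity, and the covariance formula) separately. Gaussianity is immediate: by \eqref{Unoise}, $\mathbf{U}^{\rm noise}(\bz)$ is a bounded linear functional of the Gaussian field $\etab$ (the integral operator involving $\mathbf{\Gamma}$, $\nu\times$, and the filter $\frac{1}{2}\mathcal{I}-\mathcal{P}^\K$ is deterministic and continuous), so the output is again Gaussian. Since $\mathbb{E}[\etab]=\mathbf{0}$, linearity gives $\mathbb{E}[\mathbf{U}^{\rm noise}]=\mathbf{0}$. The substance is therefore entirely in computing the covariance.

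For the covariance, first I would write $\mathbf{U}^{\rm noise}(\bz)\,\overline{\mathbf{U}^{\rm noise}(\bz')}^T$ as a double boundary integral over $\bx$ and $\bx'$, pulling out the deterministic kernels and isolating the factor $\mathbb{E}\big[\,\overline{(\cdots)[\etab\times\nu](\bx)}\;(\cdots)^{T}[\etab\times\nu](\bx')\big]$. Invoking the delta-correlation \eqref{EE}, $\mathbb{E}[\etab(\bx)\overline{\etab(\bx')}^T]=\sigma^2_{\rm noise}\delta_\bx(\bx')\mathbf{I}_3$, collapses the double integral to a single integral over $\partial\Omega$, setting $\bx'=\bx$. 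The filter $\frac{1}{2}\mathcal{I}-\mathcal{P}^\K$ and the cross product with $\nu$ then have to be handled: a careful bookkeeping of the $\nu\times$ operations together with Lemma \ref{Lem2.4} (the relation $\mathcal{P}_*^\K[\bpsi]\times\nu = -\mathcal{P}^\K[\bpsi\times\nu]$) should let the filtered terms be absorbed so that, after reciprocity \eqref{Pro1} and the symmetry of $\mathbf{\Gamma}$, the remaining kernel takes the form $\big(\overline{\mathbf{\Gamma}(\bz,\bx)\times\nu(\bx)}\big)^T\big(\mathbf{\Gamma}(\bx,\bz')\times\nu(\bx)\big)$ up to the prefactor.

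At that stage I would recognize the single boundary integral as exactly the object appearing in $\mathcal{R}_1$, i.e.\ in Lemma \ref{HKI-lem2} (the Helmholtz--Kirchhoff identity \eqref{HK2}). Applying that lemma, with $\partial\Omega$ playing the role of $\partial\mathbf{B}(0,r)$ for $\bz,\bz'$ far from the boundary, replaces the integral by $-\epsilon_0\K^{-1}\Im m\{\mathbf{\Gamma}(\bz,\bz')\}$ modulo the $O(r^{-1})$ remainder $\widetilde{\mathbf{Q}}$. Collecting the deterministic prefactor $\epsilon_0^{-2}$ from the two copies of $-\epsilon_0^{-1}\mathbf{\Gamma}$ in \eqref{Unoise}, the noise variance $\sigma^2_{\rm noise}$, and the factor $\tfrac14$ that emerges from the filter contributions, yields the stated expression $-\sigma^2_{\rm noise}(4\K\epsilon_0)^{-1}\Im m\{\mathbf{\Gamma}(\bz,\bz')\}$.

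The main obstacle is the correct treatment of the filter $\frac{1}{2}\mathcal{I}-\mathcal{P}^\K$ inside the covariance and the accounting of the repeated $\nu\times$ operations: one must verify that, under the delta-correlation, the adjoint action of the filter on the Gaussian white noise reproduces the plain kernel of \eqref{HK2} (this is where the factor $\tfrac14$ must be tracked, presumably arising from $\frac12\cdot\frac12$ after the principal part of $\mathcal{P}^\K$ cancels or is subsumed by Lemma \ref{Lem2.4}). The rest---Gaussianity, the collapse of the double integral, and the final invocation of Lemma \ref{HKI-lem2}---is routine given the results already assembled in Section \ref{form}.
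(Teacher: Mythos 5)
Your outline of the overall structure is right (Gaussianity by linearity of the map $\etab\mapsto\mathbf{U}^{\rm noise}$, the delta-correlation \eqref{EE} collapsing the double boundary integral, and a Helmholtz--Kirchhoff identity turning the resulting single integral into $\Im m\{\mathbf{\Gamma}\}$), but the step you yourself flag as ``the main obstacle'' --- the treatment of the $\mathcal{P}^\K$ part of the filter --- is a genuine gap, and the mechanism you propose for closing it is not the one that works. The operator $\mathcal{P}^\K$ is not ``absorbed'' by Lemma \ref{Lem2.4}, nor does its principal part cancel: in the far field one has the explicit nonlocal representation
\begin{align*}
\mathcal{P}^\K\left[\etab\times\nu\right](\bx)\simeq\frac{i\K}{\epsilon_0}\int_{\partial\Omega}\mathbf{\Gamma}(\by,\bx)\etab(\by)\,d\sigma(\by)\times\nu(\bx),
\end{align*}
which contributes at leading order. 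Expanding $\frac12\mathcal{I}-\mathcal{P}^\K$ in both factors of the covariance therefore produces four terms $\mathbb{E}_{pq}$, $p,q\in\{1,2\}$. Only the $\frac12\mathcal{I}$--$\frac12\mathcal{I}$ term yields the stated answer (this is indeed where the $\tfrac14$ comes from, as you guessed); but the two cross terms and the $\mathcal{P}^\K$--$\mathcal{P}^\K$ term are each of the \emph{same order} as the first and must be computed explicitly --- the cross terms via one application of the Helmholtz--Kirchhoff identity, the last via two applications, since it begins life as a triple boundary integral. The paper then shows that these three extra contributions cancel \emph{collectively}, not individually, by the pointwise algebraic identity $2\,\Im m\{iZ\}\Im m\{iZ\}-Z\,\Im m\{iZ\}-\Im m\{Z\}\overline{Z}=0$. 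Without carrying out these computations and exhibiting this cancellation, your argument does not establish the constant $-\sigma^2_{\rm noise}(4\K\epsilon_0)^{-1}$: the relation $\mathcal{P}^\K_*[\bpsi]\times\nu=-\mathcal{P}^\K[\bpsi\times\nu]$ from Lemma \ref{Lem2.4} at best rearranges the $\nu\times$ factors but cannot make the $\mathcal{P}^\K$ contributions disappear.
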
 

The reader is refered to Appendix \ref{App.Proof5.1} for the proof.  Lemma \ref{lemUnoise} indicates that $\mathbf{U}^{\rm noise}$ is a speckle pattern, that is, a random cloud of hot spots having  typical diameters of the order of wavelength and amplitudes of the order of $\sigma_{\rm noise} /(2\sqrt{\K\epsilon_0})$.  

We are now ready to perform the stability analysis of the imaging functional $\partial_T\mathcal{H}_f$. For brevity, we restrict ourselves only to the cases of permeable and dielectric inclusions, however the results extend to the cases otherwise. 

\subsection{Stability analysis for permeable inclusions}\label{sec.5.1}

Recall that the imaging functional for a permeable inclusion reduces to 
\begin{align}
\partial_T&\mathcal{H}_f(\bz)
= -\frac{\K^2a_\mu}{n}\sum_{\ell=1}^2\sum_{j=1}^n \Re e\left\{ \Big(\bU^{{\rm ture},j,\ell}(\bz)+\bU^{{\rm noise},j,\ell}(\bz)\Big)\cdot \mathbf{M}_{S}^\mu\mathbf{H}_0^{j,\ell}(\bz)\right\},\label{TDClutter}
\end{align}
where superposed $j$ and $\ell$ indicate the fields associated with incident wave $\mathbf{H}_0^{j,\ell}$.  It is straight forward that the first term in the above expression with $\mathbf{U}^{{\rm true},j,\ell}$ is identical to the one discussed in Section \ref{MultiSec} and renders the true image obtained in the case without medium noise. The second term introduces a corruption in the image due to the measurement noise. Albeit, the main peak of the true imaging functional is buried in the random cloud of hot spots due to noise, yet it is not altered. Let us compute the covariance of the corrupted image by 
\begin{align*}
&{\rm Cov}\big(\partial_T\mathcal{H}_f(\bz),\partial_T\mathcal{H}_f(\bz')\big)
\\
=
&
\frac{\K^4a_\mu^2}{n^2}\sum_{\ell,\ell'=1}^2\sum_{j,j'=1}^n
\mathbb{E}
\Big[
\Re e\Big\{ \bU^{{\rm noise},j,\ell}(\bz)\cdot \Big[\mathbf{M}_{S}^\mu\mathbf{H}_0^{j,\ell}(\bz)\Big]\Big\}\Re e\Big\{ \bU^{{\rm noise},j',\ell'}(\bz')\cdot \Big[\mathbf{M}_{S}^\mu\mathbf{H}_0^{j',\ell'}(\bz')\Big]\Big\}
\Big],
\\
=&\frac{\K^4a_\mu^2}{2n^2}\sum_{\ell=1}^2\sum_{j=1}^n
\Re e\left\{\Big[\mathbf{M}_{S}^\mu\mathbf{H}_0^{j,\ell}(\bz)\Big]\cdot\mathbb{E}
\Big[
\bU^{{\rm noise},j,\ell}(\bz)\overline{\bU^{{\rm noise},j,\ell}(\bz')}^T\Big]\mathbf{M}_{S}^\mu\overline{\mathbf{H}_0^{j,\ell}(\bz')}
\right\},
\end{align*}
where we have made use of the assumption that $\bU^{{\rm noise},j,\ell} $ and $\bU^{{\rm noise},j',\ell'}$ are uncorrelated.
Using the expression \eqref{Hj} for $\mathbf{H}^{j,\ell}_0$, Lemma \ref{lemUnoise} and the approximation \eqref{theta1}, we obtain 
\begin{align*}
{\rm Cov}&\big(\partial_T\mathcal{H}_f(\bz),\partial_T\mathcal{H}_f(\bz')\big)
\\
=&-\frac{a_\mu^2\sigma^2_{\rm noise}\K^3}{8n^2\epsilon_0}\sum_{\ell=1}^2\sum_{j=1}^n
\Re e\Big\{\Big[\mathbf{M}_{S}^\mu\mathbf{H}_0^{j,\ell}(\bz)\Big]\cdot\Im m\Big\{\mathbf{\Gamma}(\bz,\bz')\Big\}\mathbf{M}_{S}^\mu\overline{\mathbf{H}_0^{j,\ell}(\bz')}
\Big\},
\\
=&-\frac{a_\mu^2\sigma^2_{\rm noise}\K^3}{8n^2\epsilon_0}\sum_{\ell=1}^2\sum_{j=1}^n
\Re e\Big\{\mathbf{M}_{S}^\mu\theta^{\perp,\ell}_j\cdot\Im m\big\{\mathbf{\Gamma}(\bz,\bz')\big\}{\mathbf{M}_{S}^\mu}\theta^{\perp,\ell}_j e^{i\K\theta_j^T(\bz-\bz')}
\Big\},
\\
=&-\frac{a_\mu^2\sigma^2_{\rm noise}\K^3}{8n^2\epsilon_0}
\Re e\Big\{\Im m\big\{\mathbf{\Gamma}(\bz,\bz')\big\}{\mathbf{M}_{S}^\mu}: \mathbf{M}_S^\mu\Big[\frac{1}{n}\sum_{\ell=1}^2\sum_{j=1}^n\theta^{\perp,\ell}_j\left(\theta^{\perp,\ell}_j\right)^T e^{i\K\theta_j^T(\bz-\bz')}\Big]
\Big\},
\\
\simeq &\frac{\pi a_\mu^2\sigma^2_{\rm noise}\K^2}{2n\epsilon_0^2}
\Re e\left\{\Im m\big\{\mathbf{\Gamma}(\bz,\bz')\big\}\mathbf{M}_{S}^\mu: \mathbf{M}^\mu_{S}\Im m\big\{\mathbf{\Gamma}(\bz,\bz')\big\}\right\}.
\end{align*}
This shows that the typical shape of the hot spots created by the additive noise are exactly of the form of the main peak.  Thus the perturbation in the image due to measurement noise is of order $\sigma_{\rm noise}/\sqrt{2n}$ and the typical shape  of hot spots in the perturbation is identical with that of the main peak of functional $\partial_T\mathcal{H}_f$ related to accurate data. The main peak of $\partial_T\mathcal{H}_f$ is not altered by the perturbations. Moreover, since the typical size of the perturbation is inversely proportional to $\sqrt{2n}$, the use of multiple incident fields further enhances the stability of the imaging framework based on $\partial_T\mathcal{H}_f$. 

For a particular case of spherical inclusions, Lemma \ref{PT} yields 
\begin{align}
&{\rm Cov}\big(\partial_T\mathcal{H}_f(\bz),\partial_T\mathcal{H}_f(\bz')\big)
\simeq {\sigma^2_{\rm noise}\widetilde{a}_\mu^2\K^2}(2n)^{-1} 
\big\|\Im m\big\{\mathbf{\Gamma}(\bz,\bz')\big\}
\big\|^2,\label{Cov2} 
\end{align}
where 
$$
\widetilde{a}_\mu 
= \frac{3\sqrt{\pi} \mu_2|B_S|}{\epsilon_0(2\mu_0+\mu_2)}a_\mu=\frac{3\sqrt{\pi}(\mu_0-\mu_2)|B_S|}{\epsilon_0(2\mu_0+\mu_2)}.
$$
It follows immediately from \eqref{Cov2} that the variance of  $\partial_T\mathcal{H}_f$ at $\bz_S$ is given by 
\begin{eqnarray}
\label{var1} 
{\rm Var}\big(\partial_T\mathcal{H}_f(\bz_S)\big)\simeq  {\sigma^2_{\rm noise}\widetilde{a}_\mu^2\K^2}(2n)^{-1}
\big\|\Im m\big\{\mathbf{\Gamma}(\bz,\bz)\big\}
\big\|^2.
\end{eqnarray}
Therefore, the signal-to-noise ratio (SNR), defined by 
\begin{eqnarray}
{\rm SNR}:= \frac{\mathbb{E}\left[\partial_T\mathcal{H}_f(\bz_D)\right]}{\sqrt{{\rm Var}\left[\partial_T\mathcal{H}_f(\bz_D)\right]}},
\end{eqnarray}
can be approximated by virtue of expression \eqref{var1} and Corollary \ref{cor} as
\begin{eqnarray}
{\rm SNR}\simeq \frac{\sqrt{2n}\K\rho^3\widetilde{C}_\mu}{\widetilde{a}_\mu\sigma_{\rm noise}}\Big\|\Im m\big\{\mathbf{\Gamma}(\bz_D,\bz_D)\big\}\Big\|= \frac{12\sqrt{2n\pi}\K\rho^3|B_D||\mu_0-\mu_1|}{\epsilon_0(2\mu_0+\mu_1)\sigma_{\rm noise}}\Big\|\Im m\big\{\mathbf{\Gamma}(\bz_D,\bz_D)\big\}\Big\|.
\end{eqnarray}
Recall that 
$$
\Im m\big\{\mathbf{\Gamma}(\bz_S,\bz_D)\big\}
=-\frac{\epsilon_0\K}{4\pi}\left[
\frac{2}{3}j_0(\K r)\mathbf{I}_3+j_2(\K r)\left(\widehat{\mathbf{r}}\widehat{\mathbf{r}}^T-\frac{1}{3}\mathbf{I}_3\right)
\right].
$$
Therefore, the behavior of $j_0$ and $j_2$ for $r\to 0$ dictates that 
$
\Im m\big\{\mathbf{\Gamma}(\bz_D,\bz_D)\big\}\simeq -{\epsilon_0\K}{(6\pi)^{-1}}\mathbf{I}_3.
$
Consequently,
\begin{eqnarray}
{\rm SNR}\simeq 
\frac{12\sqrt{n}\K^2\rho^3|B_D||\mu_0-\mu_1|}{\sqrt{2\pi}(2\epsilon_0+\epsilon_1)\sigma_{\rm noise}}.
\end{eqnarray}
This elucidates that signal-to-noise ratio depends directly on the volume of the inclusion $\rho^3|B_D|$, the operating wavenumber $\K$ and the contrast $\mu_0-\mu_1$, and inversely proportional to the noise standard deviation $\sigma_{\rm noise}$. 

\subsection{Stability analysis for dielectric inclusions}

For the case when $D$ is a dielectric inclusion, we have 
\begin{align*}
\partial_T&\mathcal{H}_f(\bz)
= -\frac{a_\epsilon}{n}\sum_{\ell=1}^2\sum_{j=1}^n \Re e\Big\{ \nabla\times\Big(\bU^{{\rm ture},j,\ell}(\bz)+\bU^{{\rm noise},j,\ell}(\bz)\Big)\cdot \mathbf{M}_S^\epsilon \nabla\times\mathbf{H}_0^{j,\ell}(\bz')\Big]\Big\}.
\end{align*}
Observe again that the first term corresponds to the true image in the absence of the noise as for the case of permeable inclusions whereas the covariance of corrupted image is now given by 
\begin{align*}
&{\rm Cov}\big(\partial_T\mathcal{H}_f(\bz),\partial_T\mathcal{H}_f(\bz')\big),
\\
=&\frac{a_\epsilon^2}{2n^2}\sum_{\ell,\ell'=1}^2\sum_{j,l=1}^n
\Re e\Big\{\mathbb{E}\Big[
\nabla\times\mathbf{U}^{{\rm noise},j,\ell}(\bz)\cdot\mathbf{M}_S^\epsilon\nabla\times\mathbf{H}^{j,\ell}_0(\bz)
\overline{\nabla\times\mathbf{U}^{{\rm noise},j',\ell'}(\bz')\cdot\mathbf{M}_S^\epsilon\nabla\times\mathbf{H}^{j',\ell'}_0(\bz')}
\Big]\Big\},
\\
=&\frac{a_\epsilon^2}{2n^2}\sum_{\ell=1}^2\sum_{j=1}^n
\Re e\Big\{\mathbf{M}_S^\epsilon \nabla\times\mathbf{H}^{j,\ell}_0(\bz) \cdot\mathbb{E}
\left[\nabla\times\mathbf{U}^{{\rm noise},j,\ell}(\bz)
\overline{\nabla\times\mathbf{U}^{{\rm noise},j,\ell}(\bz')}^T\right]
\mathbf{M}_S^\epsilon\nabla\times\mathbf{H}^{j,\ell}_0(\bz')
\Big\}.
\end{align*}
Using the arguments as in Lemma \ref{lemUnoise}, it can be easily proved that 
\begin{align*}
\mathbb{E}\left[\nabla\times\mathbf{U}^{{\rm noise},j,\ell}(\bz)
\overline{\nabla\times\mathbf{U}^{{\rm noise},j,\ell}(\bz')}^T\right]
\simeq 
-{\sigma_{\rm noise}^2\K}(4\epsilon_0)^{-1}\Im m\Big\{\mathbf{\Gamma}(\bz,\bz')\Big\}.
\end{align*}
Therefore,
\begin{align*}
&{\rm Cov}\big(\partial_T\mathcal{H}_f(\bz),\partial_T\mathcal{H}_f(\bz')\big)
\\
\simeq &-\frac{a_\epsilon^2\sigma^2_{\rm noise}\K}{8n^2\epsilon_0}\sum_{\ell=1}^2\sum_{j=1}^n
\Re e\Big
\{\mathbf{M}_S^\epsilon\nabla\times\mathbf{H}^{j,\ell}_0(\bz)\cdot
\Im m\Big\{\mathbf{\Gamma}(\bz,\bz')\Big\}\mathbf{M}_S^\epsilon
\overline{\nabla\times\mathbf{H}^{j,\ell}_0(\bz')}
\Big\},
\\
=&-\frac{a_\epsilon^2\sigma^2_{\rm noise}\K}{8n^2\epsilon_0}\sum_{\ell=1}^2\sum_{j=1}^n
\Re e\Big
\{\Im m\Big\{\mathbf{\Gamma}(\bz,\bz')\Big\}\mathbf{M}_S^\epsilon
:\mathbf{M}_S^\epsilon\left[\nabla\times\mathbf{H}^{j,\ell}_0(\bz)\right]
\left[\overline{\nabla\times\mathbf{H}^{j,\ell}_0(\bz')}\right]^T
\Big\},
\\
=&-\frac{a_\epsilon^2\sigma^2_{\rm noise}\K^3}{8n\epsilon_0}
\Re e\Big\{\Im m\big\{\mathbf{\Gamma}(\bz,\bz')\big\}\mathbf{M}_S^\epsilon
:\mathbf{M}_S^\epsilon\Big[\frac{1}{n} \sum_{\ell=1}^2\sum_{j=1}^n\left(\theta_j\times\theta_j^{\perp,\ell}\right)
\left(\theta_j\times\theta_j^{\perp,\ell}\right)^T
e^{i\K\theta^T_j(\bz-\bz')}\Big]
\Big\},
\\
\simeq &-\frac{\pi a_\epsilon^2\sigma^2_{\rm noise}\K^2}{2n\epsilon_0^2}
\Re e\left\{\Im m\big\{\mathbf{\Gamma}(\bz,\bz')\big\}\mathbf{M}_S^\epsilon
:\mathbf{M}_S^\epsilon\Im m\big\{\mathbf{\Gamma}(\bz,\bz')\big\}
\right\},
\end{align*}
where the use of the approximation \eqref{theta2} has been made to get last identity. The analysis above elucidates that the conclusions drawn in Section \ref{sec.5.1} are valid for the case of dielectric inclusions as well and functional $\partial_T\mathcal{H}_f$ is robust with respect to measurement noise.

When $D$ is a spherical inclusion, the covariance of the corrupted image turns out to be 
\begin{align*}
{\rm Cov}\big(\partial_T\mathcal{H}_f(\bz),\partial_T\mathcal{H}_f(\bz')\big)
\simeq {\sigma^2_{\rm noise}\widetilde{a}_\epsilon^2\K^2}(2n)^{-1}
\big\|\Im m\big\{\mathbf{\Gamma}(\bz,\bz')\big\}
\big\|^2,
\end{align*}
where
$$
\widetilde{a}_\epsilon
:= \frac{3\sqrt{\pi}\epsilon_2|B_S|}{\epsilon_0(2\epsilon_0+\epsilon_2)}a_\epsilon
=\frac{3\sqrt{\pi}|\epsilon_0-\epsilon_2||B_S|}{\epsilon_0(2\epsilon_0+\epsilon_2)}.
$$
Therefore, the variance of $\partial_T\mathcal{H}_f$  for a spherical dielectric inclusion is given as 
\begin{eqnarray}
{\rm Var}\big(\partial_T\mathcal{H}_f(\bz_S)\big) \simeq
{\sigma^2_{\rm noise}\widetilde{a}_\epsilon^2\K^2}(2n)^{-1}
\big\|\Im m\big\{\mathbf{\Gamma}(\bz_S,\bz_S)\big\}
\big\|^2. 
\end{eqnarray} 

The signal-to-noise ratio in this case can be given as 
\begin{eqnarray}
{\rm SNR}\simeq 
\frac{12\rho^3|B_D|\K\sqrt{2n\pi}|\epsilon_0-\epsilon_1|}{\epsilon_0(2\epsilon_0+\epsilon_1)\sigma_{\rm noise}}\Big\|\Im m\Big\{\mathbf{\Gamma}(\bz_D,\bz_D)\Big\}
\Big\|,
\end{eqnarray}
by virtue of Corollary \ref{cor}.  As in the previous section, the behavior of $j_0$ and $j_2$ when $r\to 0$ suggests that
\begin{eqnarray}
{\rm SNR}\simeq 
\frac{12\sqrt{n}\rho^3|B_D|\K^2|\epsilon_0-\epsilon_1|}{\sqrt{2\pi}(2\epsilon_0+\epsilon_1)\sigma_{\rm noise}}.
\end{eqnarray}

\section{Statistical stability with respect to medium noise}\label{medNoise} 

In this section, we aim to investigate the statistical stability of the imaging functional $\partial_T\mathcal{H}_f$ with respect to medium noise. For simplicity, we assume that only one of the permittivity and permeability parameters fluctuates around the background value at a time.  The general case of medium noise can be dealt with analogously but is more involved and is not presented for brevity.

\subsection{Fluctuations in permeability} \label{subsection1}

Let the permeability of $\Omega$, denoted by $\mu(\bx)$ throughout in this section, be fluctuating around the background permeability such that 
\begin{eqnarray}
\label{mu(x)}
\mu(\bx)=\mu_0\big(1+\gamma(\bx)\big),
\end{eqnarray}
where $\gamma(\bx)$ represents a random fluctuation such that the typical size of $\gamma$, denoted by $\sigma_\gamma$, is small enough so that the Born approximation is valid. We emphasize that $\gamma$ is a real-valued function. 
\newline
\textbf{Nota Bene.}\quad Throughout this subsection, we term the homogeneous medium with parameters $(\epsilon_0,\mu_0)$ as the reference medium, and the random medium without inclusion as the background medium still denoted by $\Omega$ by abuse of notation. Further, superposed $0$ indicates a field in the reference medium and any field otherwise is related to the random medium with or without inclusion henceforth.

Let $\mathbf{G}^0$ and $\mathbf{G}$ be the reference and background dyadic Green's functions with Neumann type boundary conditions, that is, the solutions to
\begin{eqnarray}
\begin{cases}
\nabla_\bx\times\nabla_\bx\times \mathbf{G}^0(\bx,\by)-\K^2\mathbf{G}^0(\bx,\by)=-\epsilon_0\delta_\by(\bx)\mathbf{I}_3, & \Omega,
\\
\left(\nabla_\bx\times\mathbf{G}^0(\bx,\by)\right)\times \nu(\bx)=0, & \partial\Omega, 
\end{cases}\label{G0}
\end{eqnarray}
and
\begin{eqnarray}
\begin{cases}
\nabla_\bx\times\nabla_\bx\times \mathbf{G}(\bx,\by)-(1+\gamma(x))\K^2\mathbf{G}(\bx,\by)=-\epsilon_0\delta_\by(\bx)\mathbf{I}_3, & \Omega,
\\
\left(\nabla_\bx\times\mathbf{G}(\bx,\by)\right)\times \nu(\bx)=0, & \partial\Omega. 
\end{cases}\label{G}
\end{eqnarray}

The following result holds and can be proved by similar arguments as in \cite[Theorem 2.28]{AKpol}.
\begin{lem}\label{Lem6.1}
For all $\bx\in\partial\Omega$ and $\by\in\Omega$, we have 
\begin{eqnarray}
\left(\frac{1}{2}+\mathcal{P}^{\K,0}_*\right)\mathbf{G}^0(\bx,\by)
=\mathbf{\Gamma}^{0}(\bx,\by).
\end{eqnarray}
\end{lem}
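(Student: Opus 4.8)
The plan is to derive the identity from the vector form of Green's second identity together with the jump relations of Lemma \ref{lemJump}, in the spirit of \cite[Theorem 2.28]{AKpol}. Here $\mathbf{\Gamma}^0$ denotes the reference free-space dyadic Green's function (the $\mathbf{\Gamma}$ of Section \ref{form} with parameters $(\epsilon_0,\mu_0)$) and $\mathcal{P}^{\K,0}_*$ is the operator $\mathcal{P}^\K_*$ associated with $\partial\Omega$ in the reference medium. I fix $\by\in\Omega$ and constant vectors $\mathbf{p},\mathbf{q}\in\RR^3$, and set $\mathbf{u}(\bx):=\mathbf{G}^0(\bx,\by)\mathbf{p}$ and $\mathbf{v}(\bx):=\mathbf{\Gamma}^0(\bx,\bx_0)\mathbf{q}$ for an auxiliary interior point $\bx_0\in\Omega$.

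First I would apply the vector Green's second identity on $\Omega$ to the pair $(\mathbf{u},\mathbf{v})$. Both fields solve $\nabla\times\nabla\times\,\cdot\,-\K^2\,\cdot$ with a single Dirac source (at $\by$ and $\bx_0$ respectively), so the volume term collapses, via the reciprocity/symmetry relations \eqref{Pro1}, to $\epsilon_0\big[(\mathbf{G}^0(\bx_0,\by)-\mathbf{\Gamma}^0(\bx_0,\by))\mathbf{p}\big]\cdot\mathbf{q}$. On the boundary side the Neumann-type condition $(\nabla\times\mathbf{G}^0)\times\nu=\mathbf{0}$ forces $\nabla\times\mathbf{u}$ to be purely normal, so its pairing with the tangential field $\nu\times\mathbf{v}$ vanishes and only $\int_{\partial\Omega}(\nu\times\mathbf{u})\cdot(\nabla_\bx\times\mathbf{v})\,d\sigma$ survives; this is the crucial step that encodes the boundary condition.

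Next I would recast this surviving boundary integral as a layer potential. Using $\nabla_\bx\times\mathbf{\Gamma}^0(\bx,\bx_0)=[\nabla_{\bx_0}\times\mathbf{\Gamma}^0(\bx_0,\bx)]^T$ from \eqref{Pro1} and the definition of $\mathcal{S}^\K_E$, the integral equals $-\epsilon_0\,\mathbf{q}\cdot\nabla_{\bx_0}\times\mathcal{S}^\K_E[\nu\times\mathbf{G}^0(\cdot,\by)\mathbf{p}](\bx_0)$, so that, $\mathbf{q}$ being arbitrary,
\begin{equation*}
(\mathbf{G}^0(\bx_0,\by)-\mathbf{\Gamma}^0(\bx_0,\by))\mathbf{p}=-\nabla_{\bx_0}\times\mathcal{S}^\K_E[\nu\times\mathbf{G}^0(\cdot,\by)\mathbf{p}](\bx_0),\qquad\bx_0\in\Omega .
\end{equation*}
I would then let $\bx_0\to\zeta\in\partial\Omega$ from inside; this trace is legitimate because $\by$ is separated from $\partial\Omega$ by \eqref{assumption}, so the only singular contribution comes from the single-layer kernel. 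Applying the interior jump relation of Lemma \ref{lemJump}, $((\nabla\times\mathcal{S}^\K_E[\mathbf{j}])\times\nu)^-=(\tfrac12\mathcal{I}+\mathcal{P}^\K)[\mathbf{j}]$, together with $\nu\times\bpsi=-\bpsi\times\nu$ and the relation $\mathcal{P}^\K_*[\bpsi]\times\nu=-\mathcal{P}^\K[\bpsi\times\nu]$ of Lemma \ref{Lem2.4}, reorganizes the right-hand side into $(\tfrac12\mathcal{I}+\mathcal{P}^{\K,0}_*)[\mathbf{G}^0(\cdot,\by)]\mathbf{p}$; since $\mathbf{p}$ is arbitrary, the matrix identity follows.

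The hard part will be this final trace step. One must track the sign and the factor $\tfrac12$ of the jump in the \emph{interior} limit and carefully commute the operations $\nu\times(\cdot)$ and $(\cdot)\times\nu$, invoking Lemma \ref{Lem2.4}, so as to produce exactly $\mathcal{P}^{\K,0}_*$ rather than $\mathcal{P}^{\K,0}$. While the jump relation immediately delivers the tangential components of both sides, identifying the \emph{full} matrix identity (the normal components) requires the continuity of the normal trace of $\nabla\times\mathcal{S}^\K_E[\,\cdot\,]$ across $\partial\Omega$ together with the divergence-free structure of the magnetic Green's functions. This bookkeeping is the analogue of the computation in \cite[Theorem 2.28]{AKpol} and is where all the delicacy resides.
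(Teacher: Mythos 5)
The paper itself gives no proof of Lemma \ref{Lem6.1} --- it only points to \cite[Theorem 2.28]{AKpol} --- and your Green's-identity argument is exactly the electromagnetic transcription of that scalar proof, so you are on the intended route. I have checked the bookkeeping: the volume term, the identification of the surviving boundary integral with $-\epsilon_0\,\mathbf{q}\cdot\nabla_{\bx_0}\times\mathcal{S}^{\K}_E[\nu\times\mathbf{G}^0(\cdot,\by)\mathbf{p}](\bx_0)$ via \eqref{Pro1}, and the interior limit using $\big((\nabla\times\mathcal{S}^\K_E[\,\mathbf{j}\,])\times\nu\big)^-=(\tfrac12\mathcal{I}+\mathcal{P}^\K)[\,\mathbf{j}\,]$ together with $\mathcal{P}^\K_*[\bpsi]\times\nu=-\mathcal{P}^\K[\bpsi\times\nu]$ all close correctly and yield
\begin{equation*}
\Big[\Big(\tfrac12\mathcal{I}+\mathcal{P}^{\K,0}_*\Big)\mathbf{G}^0(\cdot,\by)\Big](\bx)\times\nu(\bx)=\mathbf{\Gamma}^0(\bx,\by)\times\nu(\bx),\qquad \bx\in\partial\Omega,\ \by\in\Omega.
\end{equation*}

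The only step that does not close is the one you yourself defer: upgrading this tangential identity to the full matrix identity of the lemma. Since $\mathcal{P}^{\K,0}_*$ has tangential range, the matrix statement additionally asserts $\tfrac12\,\nu(\bx)^T\mathbf{G}^0(\bx,\by)=\nu(\bx)^T\mathbf{\Gamma}^0(\bx,\by)$, and the ingredient you invoke for it --- continuity of the normal trace of $\nabla\times\mathcal{S}^\K_E[\,\cdot\,]$ across $\partial\Omega$ --- only shows that this normal trace is single-valued (it involves only tangential derivatives of the continuous single-layer trace, so there is no $\pm\tfrac12$ jump in the normal direction); it does not produce the factor $\tfrac12$ that the normal part of the identity would require, and the divergence-free structure of the two Green's functions does not obviously supply it either. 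So that portion remains an unverified claim, and it is not clear to me that the full matrix identity is even true. This is harmless for the paper: in the only place Lemma \ref{Lem6.1} is invoked (the computation of $T_3$ in Section \ref{subsection1}), the identity is immediately paired against tangential fields of the form $\mathbf{\Gamma}^0(\bx,\bz)\times\nu(\bx)\times\nu(\bx)$, so only the tangential projection --- which your argument does establish --- is ever used. I would either state and prove the lemma in the tangential form displayed above, or supply a genuine argument for the normal components.
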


The following Born approximation is valid  
\begin{eqnarray}
\mathbf{G}(\bx,\by)= \mathbf{G}^0(\bx,\by)-\frac{\K^2}{\epsilon_0}\int_\Omega \mathbf{G}^0(\bx,\bz)\gamma(\bz)\mathbf{G}^0(\bz,\by) d\bz +o(\sigma_\gamma).\label{born1}
\end{eqnarray}
Moreover, we also have 
\begin{eqnarray}
\mathbf{H}_0(\bx)=\mathbf{H}_0^0(\bx)-\frac{\K^2}{\epsilon_0}\int_\Omega \mathbf{G}^0(\bx,\bz)\gamma(\bz)\mathbf{H}^0_0(\bz) d\bz+o(\sigma_\gamma).\label{born2} 
\end{eqnarray} 

The back-propagator $\mathbf{U}$ is now constructed as follows, 
\begin{equation}
\ds \mathbf{U}(\bz)= -\frac{1}{\epsilon_0}\int_{\partial\Omega} \mathbf{\Gamma}^{0} (\bx,\bz)\nu(\bx)\times\overline{\left(\frac{1}{2}\mathcal{I}-\mathcal{P}^{\K,0}\right)\left[(\mathbf{H}_\rho-\mathbf{H}_0^0)\times\nu\right](\bx)} d\sigma(\bx).
\end{equation}
Note that the back-propagation step uses reference fundamental solution and the reference magnetic solution since the background solutions are unknown. This substantiates that the back-propagation step transports not only the true scattered field but also the first scattering source (under Born approximation) due to fluctuations, thereby generating a spatially distributed contribution in the image. Further, the background Green's function $\mathbf{G}$ is not known exactly but up to a first order approximation. Therefore, the back-propagation using reference Green's function $\mathbf{G}^0$ may affect the principle peak of the imaging functional around $\bz_S\simeq\bz_D$.

We express $\mathbf{H}_\rho-\mathbf{H}_0^0$ as the sum of two terms $\mathbf{H}_\rho-\mathbf{H}_0$ and $\mathbf{H}_0-\mathbf{H}_0^0$   and subsequently invoke Lemma \ref{Asymp} and Born approximations \eqref{born1}--\eqref{born2}. Therefore,
\begin{align*}
\mathbf{U}(\bz)&= -\frac{1}{\epsilon_0}\int_{\partial\Omega}\mathbf{\Gamma}^{0}(\bx,\bz)\nu(\bx)\times\overline{\left(\frac{1}{2}\mathcal{I}-\mathcal{P}^{\K,0}\right)\left[(\mathbf{H}_\rho-\mathbf{H}_0)\times\nu\right](\bx)} d\sigma(\bx)
\\
&-\frac{1}{\epsilon_0}\int_{\partial\Omega}\mathbf{\Gamma}^{0}(\bx,\bz)\nu(\bx)\times\overline{\left(\frac{1}{2}\mathcal{I}-\mathcal{P}^{\K,0}\right)\left[(\mathbf{H}_0-\mathbf{H}_0^0)\times\nu\right](\bx)} d\sigma(\bx),
\\
&=
-\frac{1}{\epsilon_0}\int_{\partial\Omega}\mathbf{\Gamma}^{0}(\bx,\bz)\nu(\bx)\times\overline{\left(\frac{1}{2}\mathcal{I}-\mathcal{P}^{\K,0}\right)\left[(\mathbf{H}_\rho^0-\mathbf{H}_0^0)\times\nu\right](\bx)} d\sigma(\bx)
\\
&
+\frac{\K^2}{\epsilon_0^2} \int_{\partial\Omega}\mathbf{\Gamma}^{0}(\bx,\bz)\nu(\bx)\times\overline{\left(\frac{1}{2}\mathcal{I}-\mathcal{P}^{\K,0}\right)\left[\int_\Omega \mathbf{G}^0(\cdot,\by)\gamma(\by)(\mathbf{H}_\rho^0-\mathbf{H}_0^0)(\by)d\by\times\nu\right](\bx)} d\sigma(\bx)
\\
&
+\frac{\K^2}{\epsilon_0^2}\int_{\partial\Omega}\mathbf{\Gamma}^{0}(\bx,\bz)
\nu(\bx)\times\overline{\left(\frac{1}{2}\mathcal{I}-\mathcal{P}^{\K,0}\right)\left[\int_\Omega\mathbf{G}^0(\cdot,\by)\gamma(\by)\mathbf{H}^0_0(\by)d\by\times \nu\right](\bx)}d\sigma(\bx)
+o(\sigma_\gamma),
\\
&
= T_1+T_2+T_3+o(\sigma_\gamma),
\end{align*}
where $T_1$, $T_2$ and $T_3$ represent the first, second and third term on the right hand side respectively. 

Note that $T_1$ is exactly the reference back-propagator defined in \eqref{U}. Therefore, we will denote this term by $\mathbf{U}^{\rm true}(\bz)$. From \cite[Theorem 2.1]{AV}, we have $(\mathbf{H}_\rho^0-\mathbf{H}_0^0)=O(\rho^3)$. Consequently, the second term $T_2$
is of the order $O(\sigma_\gamma \rho^3)$ and is neglected henceforth. Finally, by using Lemma \ref{Lem2.4}, Lemma \ref{Lem6.1} and Lemma \ref{HKI-lem1} respectively,  we have
\begin{align*}
T_3=& \frac{\K^2}{\epsilon^2_0}\int_{\partial\Omega} \left(\mathbf{\Gamma}^0(\bx,\bz)\times\nu(\bx)\right)^T 
\overline{\left(\frac{1}{2}\mathcal{I}+\mathcal{P}_*^{\K,0}\right)
\left[\int_\Omega\mathbf{G}^0(\cdot,\by)\gamma(\by)\mathbf{H}^0_0(\by) d\by\right](\bx)}\times\nu(\bx) d\sigma(\bx),
\\
=&-\frac{\K^2}{\epsilon^2_0}
\int_{\partial\Omega}\left(\mathbf{\Gamma}^0(\bx,\bz)\times\nu(\bx)\times\nu(\bx)\right)^T \int_\Omega \overline{\mathbf{\Gamma}^0(\bx,\by)}\gamma(\by)\overline{\mathbf{H}^0_0(\by)}d\by d\sigma(\bx),
\\
=&\frac{\K^2}{\epsilon^2_0}\int_\Omega \gamma(\by)\left[\int_{\partial\Omega}\mathbf{\Gamma}^0(\bx,\bz) \overline{\mathbf{\Gamma}^0(\bx,\by)}^T d\sigma(\bx)\right]\overline{\mathbf{H}^0_0(\by)}d\by,
\\
\simeq 
&-\frac{\K}{\epsilon_0}\int_\Omega \gamma(\by)\Im m\big\{\mathbf{\Gamma}^0(\by,\bz) \big\}\overline{\mathbf{H}^0_0(\by)}d\by.
\end{align*}
Therefore, we conclude that 
$
\mathbf{U}(\bz)=\mathbf{U}^{\rm true}(\bz) +\mathbf{U}^{\rm noise}(\bz) +O(\sigma_\gamma\rho^3)+o(\sigma_\gamma),
$
where $\mathbf{U}^{\rm noise}$ is defined by 
\begin{eqnarray}
\mathbf{U}^{\rm noise}(\bz):=-\frac{\K}{\epsilon_0}\int_\Omega \gamma(\by)\Im m\big\{\mathbf{\Gamma}^0(\by,\bz) \big\}\overline{\mathbf{H}^0_0(\by)}d\by.\label{UnoiseApp}
\end{eqnarray}

The expansion of $\mathbf{U}(\bz)$ clearly shows that the back-propagator in the random medium is approximately the sum of  reference back-propagator and the error term due to clutter. The reference back propagator $\mathbf{U}^{\rm true}$ produces the principle peak of $\partial_T\mathcal{H}_f$, that is, without medium noise. The back-propagator $\mathbf{U}^{\rm noise}$ generates a speckle field corrupting the reconstructed image. In rest of this subsection,   we restrict ourselves to the case of permeable inclusions and dielectric inclusions for simplicity in order to analyze the speckle field generated by $\mathbf{U}^{\rm noise}$. Further, the situation when there are  multiple incident fields of the form \eqref{Hj} is taken into account.  

\subsubsection{Speckle field analysis for permeable inclusions}\label{Sec.6.1.1}

Let us  compute the covariance of  speckle field due to back-propagation of $\mathbf{U}^{\rm noise, j, \ell}$. We have
\begin{align}
{\rm Cov}&\Big(\partial_T\mathcal{H}_f(\bz), \partial_T\mathcal{H}_f(\bz')\Big)=
\frac{\K^4a_\mu^2}{n^2}\sum_{\ell,\ell'=1}^2\sum_{j,j'=1}^n
\nonumber
\\
&
\mathbb{E}\left[
\Re e\left\{\mathbf{U}^{{\rm noise},j,\ell}(\bz)\cdot\mathbf{M}_{S}^\mu\mathbf{H}^{0,j,\ell}_0(\bz) \right\}
\Re e\left\{\mathbf{U}^{{\rm noise},j',\ell'}(\bz')\cdot \mathbf{M}_{S}^\mu\mathbf{H}^{0,j',\ell'}_0(\bz')\right\}
\right],
\end{align}
for all $\bz,\bz'\in\Omega$, where $\mathbf{H}^{0,j,\ell}_0$ are the incident fields of the form \eqref{Hj}. 
First of all, we invoke \eqref{UnoiseApp}, \eqref{Hj} and \eqref{theta1} to get 
\begin{align*}
\frac{1}{n}\sum_{\ell=1}^2\sum_{j=1}^n 
\mathbf{U}^{{\rm noise},j,\ell}
&(\bz) \cdot\mathbf{M}_{S}^\mu\mathbf{H}^{0,j, \ell}_0(\bz)
\\
= &
-\frac{\K}{\epsilon_0 n}\sum_{\ell=1}^2\sum_{j=1}^n\int_\Omega \gamma(\by)\Im m\left\{\mathbf{\Gamma}^0(\by,\bz)\right\}\overline{\mathbf{H}^{0,j,\ell}_0(\by)}d\by\cdot
\mathbf{M}_{S}^\mu\mathbf{H}^{0,j,\ell}_0(\bz),
\\
= &
-\frac{\K}{\epsilon_0 n}\sum_{\ell=1}^2\sum_{j=1}^n\int_\Omega \gamma(\by)\Im m\left\{\mathbf{\Gamma}^0(\by,\bz)\right\}:\mathbf{M}_{S}^\mu\mathbf{H}^{0,j,\ell}_0 (\bz) \left[\overline{\mathbf{H}^{0,j,\ell}_0(\by)}\right]^T d\by,
\\
= &
-\frac{\K}{\epsilon_0}\int_\Omega \gamma(\by)\Im m\left\{\mathbf{\Gamma}^0(\by,\bz)\right\}:\mathbf{M}_{S}^\mu
\Bigg[
\frac{1}{n}\sum_{\ell=1}^2\sum_{j=1}^n\theta_j^{\perp,\ell}\Big[\theta_j^{\perp,\ell}\Big]^T
e^{i\K\theta_j^T(\bz-\by)}
\Bigg] d\by,
\\
\simeq &
\phantom{-}\frac{4\pi}{\epsilon_0^2}\int_\Omega \gamma(\by)\mathcal{Q}_\gamma[\mathbf{M}_{S}^\mu](\by,\bz)d\by,
\end{align*}
where $\mathcal{Q}_\gamma$ is a non-negative real valued function defined for any $3\times 3$ real matrix $\mathbf{A}$ by
$$
\mathcal{Q}_\gamma[\mathbf{A}](\by,\bz):=\Im m\big\{\mathbf{\Gamma}^0(\by,\bz)\big\}:\mathbf{A} \Im m\big\{\mathbf{\Gamma}^0(\by,\bz)\big\}.
$$
Then, the covariance of the speckle field can be approximated by 
$$
{\rm Cov}\Big(\partial_T\mathcal{H}_f(\bz), \partial_T\mathcal{H}_f(\bz')\Big)
\simeq {16\pi^2 a_\mu^2 \K^4}\epsilon_0^{-4} \iint_{\Omega\times\Omega} C_\mu(\by,\by')\mathcal{Q}_\gamma[\mathbf{M}_{S}^\mu](\by,\bz)\mathcal{Q}_\gamma[\mathbf{M}_{S}^\mu](\by',\bz') d\by d\by',
$$
where $C_\gamma(\by,\by')=\mathbb{E}\left[\gamma(\by)\gamma(\by')\right]$  is  the two-point correlation function of the fluctuations in permeability.  The function $\bz_S\to \mathcal{Q}_\gamma[\mathbf{M}_S^\mu](\bz_S,\bz_D)$ is maximal for $\bz_S=\bz_D$ and the focal spot of its peak is of the order of half the operating wavelength.

Note that $\mathcal{Q}_\gamma[\mathbf{I}_3](\by,\bz)= \|\Im m\big\{\mathbf{\Gamma}(\by,\bz)\big\}\|^2$. Therefore, for the case of a spherical inclusion, thanks to Lemma \ref{PT}, we have
\begin{align}
\frac{1}{n}\sum_{\ell=1}^2\sum_{j=1}^n \mathbf{U}^{{\rm noise},j,\ell}(\bz)
\cdot \mathbf{M}_{S}^\mu\mathbf{H}^{0,j,\ell}_0(\bz) 
\simeq
\frac{b_\mu}{a_\mu}\int_\Omega \gamma(\by)\mathcal{Q}_\gamma[\mathbf{I}_3](\by,bz)d\by.\label{Approx2}
\end{align}
\begin{align}
{\rm Cov}\Big(\partial_T\mathcal{H}_f(\bz), \partial_T\mathcal{H}_f(\bz')\Big)
\simeq b_\mu^2 \K^4\iint_{\Omega\times\Omega} C_\mu(\by,\by')
\mathcal{Q}_\gamma[\mathbf{I}_3](\by,\bz)
\mathcal{Q}_\gamma[\mathbf{I}_3](\by',\bz') d\by d\by',
\label{ApproxCov}
\end{align}
where 
\begin{eqnarray}
b_\mu:= \frac{12\pi\left(\mu_0-\mu_2\right)|B_S|}{\epsilon_0^2(2\mu_0+\mu_2)}.
\end{eqnarray}

The expression \eqref{Approx2} elucidates that the speckle field in the image is essentially the medium noise smoothed by an integral kernel of the form $\|\Im m\{\mathbf{\Gamma}^0\}\|^2$ . Similarly, \eqref{ApproxCov} elucidates that the correlation structure of the speckle field is essentially that of the medium
noise smoothed by the same kernel. Since the typical width of $\Im m\{\mathbf{\Gamma}^0\}$ is about half the
wavelength, the correlation length of the speckle field is roughly the maximum between the correlation length of medium noise and the wavelength, that is, of the same order as the main peak centered at location $\bz_S\simeq \bz_D$. Thus, there is no way to distinguish the main peak from the hot spots of the speckle field based on their shapes. Only the height of the main peak can allow it to be visible out of the speckle field.  Unlike measurement noise case discussed in the previous section, the factor $\sqrt{n}$ disappeared. Therefore, the functional $\partial_T\mathcal{H}_f$ is moderately stable with respect to medium noise. Moreover, the main peak of $\partial_T\mathcal{H}_f$ is affected by the clutters, unlike in the measurement noise case. Thus, $\partial_T\mathcal{H}_f$ is more robust with respect to measurement noise than medium noise.

\subsubsection{Speckle field analysis for dielectric inclusions}

In order to compute the covariance of the speckle field generated by the back-propagation of $\mathbf{U}^{\rm noise}$ for a dielectric inclusion, we first note that
\begin{align*}
\frac{1}{n}\sum_{\ell=1}^2\sum_{j=1}^n 
& \nabla\times\mathbf{U}^{{\rm noise},j,\ell}(\bz) \cdot\mathbf{M}_S^\epsilon\nabla\times\mathbf{H}^{0,j,\ell}_0(\bz)
\\
\simeq & -
\frac{\K}{\epsilon_0 n}\sum_{\ell=1}^2\sum_{j=1}^n\int_{\Omega}\gamma(\by) \nabla_\bz\times\Im m\Big\{\mathbf{\Gamma}^0(\by,\bz)\Big\}\overline{\mathbf{H}_0^{0,j,\ell}(\by)}\cdot \mathbf{M}_S^\epsilon\nabla\times\mathbf{H}_0^{0,j,\ell}(\bz)d\by, 
\\
= &-
\frac{\K}{\epsilon_0 n} \sum_{\ell=1}^2\sum_{j=1}^n\int_{\Omega}\gamma(\by)\nabla_\bz\times\Im m\Big\{\mathbf{\Gamma}^0(\by,\bz)\Big\}:\mathbf{M}_S^\epsilon \nabla_\bz\times\left(\mathbf{H}_0^{0,j,\ell}(\bz)\overline{\mathbf{H}_0^{0,j,\ell}(\by)}^T\right) d\by,
\\
= &
- \frac{\K}{\epsilon_0} \int_{\Omega}\gamma(\by)\nabla_\bz\times\Im m\Big\{\mathbf{\Gamma}^0(\by,\bz)\Big\}:\mathbf{M}_S^\epsilon \nabla_\bz\times \Bigg(\frac{1}{n}\sum_{\ell=1}^2\sum_{j=1}^n\theta_j^{\perp,\ell}\Big(\theta_j^{\perp,\ell}\Big)^Te^{i\K\theta_j^T(\bz-\by)}\Bigg) d\by,
\\
\simeq &\phantom{-}\frac{4\pi}{\epsilon_0^2} \int_{\Omega}\gamma(\by)\widetilde{\mathcal{Q}}_\gamma
[\mathbf{M}^\epsilon_S](\by,\bz)d\by,
\end{align*} 
where $\widetilde{\mathcal{Q}}_\gamma$ is a non-negative real valued function defined for any $3\times 3$ real matrix $\mathbf{A}$ by
$$
\widetilde{\mathcal{Q}}_\gamma[\mathbf{A}](\by,\bz)
=
\nabla_\bz\times\Im m\Big\{\mathbf{\Gamma}^0(\by,\bz)\Big\}:\mathbf{M}_S^\epsilon \nabla_\bz\times\Im m\Big\{\mathbf{\Gamma}^0(\by,\bz)\Big\}d\by.
$$
Therefore, the covariance turns out to be
$$
{\rm Cov}\Big(\partial_T\mathcal{H}_f(\bz), \partial_T\mathcal{H}_f(\bz')\Big)
\simeq\frac{16\pi^2 a_\epsilon^2}{\epsilon_0^4}\iint_{\Omega\times\Omega} C_\mu(\by,\by')\widetilde{\mathcal{Q}}_\gamma[\mathbf{M}_{S}^\epsilon](\by,\bz)\widetilde{\mathcal{Q}}_\gamma[\mathbf{M}_{S}^\epsilon](\by',\bz') d\by d\by',
$$
Finally, note that $\widetilde{\mathcal{Q}}_\gamma[\mathbf{I}_3](\by,\bz)=\|\Im m\left\{\mathbf{\Gamma}^0(\by,\bz)\right\}\|^2$, thus for a spherical dielectric inclusion
\begin{align}
\frac{1}{n}\sum_{\ell=1}^2\sum_{j=1}^n \nabla\times \mathbf{U}^{{\rm noise},j,\ell}(\bz)\cdot\mathbf{M}_S^\epsilon\nabla\times\mathbf{H}_0^{0,j,\ell}(\bz)
\simeq 
\frac{b_\epsilon}{a_\epsilon}\int_\Omega \gamma(\by)\widetilde{\mathbf{Q}}_\gamma[\mathbf{I}_3](\by,\bz)d\by,
\end{align}
\begin{align}
{\rm Cov}\Big(\partial_T\mathcal{H}_f(\bz), \partial_T\mathcal{H}_f(\bz')\Big)
\simeq {b}_\epsilon^2
\iint_{\Omega\times\Omega} {C}_\gamma(\by,\by')  
\widetilde{\mathcal{Q}}_\gamma[\mathbf{I}_3](\by,\bz)
\widetilde{\mathcal{Q}}_\gamma[\mathbf{I}_3](\by',\bz')
d\by d\by ',\label{Cov4}
\end{align}
where the constant ${b}_\epsilon$ is defined by
\begin{eqnarray}
{b}_\epsilon=\frac{12\pi(\epsilon_0-\epsilon_2)|B_S|}{\epsilon_0^2(2\epsilon_0+\epsilon_2)}.
\end{eqnarray}
The conclusions drown in Section \ref{Sec.6.1.1} still hold in this case and the imaging functional is moderately stable. 

\subsection{Fluctuations in permittivity} 

Let us now investigate the stability of the imaging framework with respect to medium noise when the permittivity, hereafter denoted by $\epsilon$, is fluctuating randomly around the reference permittivity. We assume that the fluctuating  \emph{background} permittivity is such that 
\begin{eqnarray}
{\epsilon^{-1}(\bx)}:= {\epsilon_0^{-1}}[1+\alpha(\bx)],
\end{eqnarray} 
where $\alpha$ is a random fluctuation. It is again assumed that the fluctuation is weak so that the Born approximation is appropriate. We will make  use of the same conventions as in Section \ref{subsection1} for reference and background media, and fields. 

The equation for the magnetic field with fluctuating permittivity is then given by 
\begin{eqnarray}
\nabla\times \nabla\times \mathbf{H}_\rho(\bx) -\K^2\mathbf{H}_\rho(\bx)= -\nabla\times\alpha(\bx)\nabla\times\mathbf{H}_\rho(\bx).
\end{eqnarray}
Since  the Born approximation is appropriate thanks to assumption of weak fluctuations, we have $\mathbf{H}_\rho \simeq \mathbf{H}_\rho^0-\mathbf{H}_\rho^1$, where $\mathbf{H}_\rho^0$ solves the reference problem and $\mathbf{H}_\rho^1$ solves 
\begin{eqnarray}
\nabla\times \nabla\times \mathbf{H}^1_\rho(\bx)-\K^2\mathbf{H}^1_\rho(\bx)
= -\nabla\times\alpha(\bx)\nabla\times\mathbf{H}^0_\rho(\bx).
\end{eqnarray}
Consequently, we have 
\begin{eqnarray}
\mathbf{H}^1_\rho(\bx)= \frac{1}{\epsilon_0}\int_\Omega\mathbf{G}^0(\bx,\by)
\left(\nabla \times\alpha(\by)\nabla\times
\mathbf{H}^0_\rho(\by)\right)d\by,
\end{eqnarray}
where $\mathbf{G}^0(\bx,\by)$ is given by \eqref{G0}.

Following the analysis in Section \ref{subsection1}, it can be noticed that the back-propagator, again defined in terms of the reference fundamental solution and associated reference solution, consists of two terms, one leading to the true image whereas the second giving rise to a speckle field corrupting the image thanks to permittivity fluctuations. Using analogous arguments and manipulations as in the permeability fluctuation case, the noise back-propagating term turns out to be
\begin{align}
\mathbf{U}^{\rm noise}(\bz)
\simeq 
& -\frac{1}{\epsilon_0\K}\int_\Omega\Im m\Big\{\mathbf{\Gamma}^0(\by,\bz)\Big\}\nabla\times\alpha(\by)\nabla
\times\overline{\mathbf{H}^0_0(\by)}d\by.
\end{align}

\subsubsection{Speckle field analysis for permeable inclusions}

For a permeable inclusion, the speckle field generated by $\partial_T\mathcal{H}_f$  at $\bz\in\Omega$ is given by 
\begin{align}
\nonumber
T_4:=&\frac{1}{n}\sum_{\ell=1}^2
\sum_{j=1}^n 
\Re e\Big\{
\mathbf{U}^{{\rm noise},j,\ell}(\bz)\cdot \mathbf{M}_S^\mu\mathbf{H}_0^{0,j,\ell}(\bz)
\Big\}
\\
\simeq &
-\frac{1}{\K \epsilon_0 n}\sum_{j=1}^n 
\Re e\left\{
\int_\Omega \Im m\Big\{\mathbf{\Gamma}^0(\by,\bz)\Big\}\nabla\times\alpha(\by)\nabla
\times\overline{\mathbf{H}^{0,j,\ell}_0(\by)}\cdot  
\mathbf{M}_S^\mu \mathbf{H}_0^{0,j,\ell}(\bz)d\by
\right\}
\end{align}
Since $\Im\big\{\mathbf{\Gamma}^0(\by,\bz)\big\}$ is symmetric, we have 
\begin{align}
\nonumber
T_4
\simeq &
-\frac{1}{\K\epsilon_0 n}\sum_{\ell=1}^2\sum_{j=1}^n 
\Re e\left\{
\int_\Omega \Im m\Big\{\mathbf{\Gamma}^0(\by,\bz)\Big\} \mathbf{M}_S^\mu\mathbf{H}_0^{0,j,\ell}(\bz)\cdot\nabla\times\alpha(\by)\nabla
\times\overline{\mathbf{H}^{0,j,\ell}_0(\by)}d\by
\right\}.
\end{align}
Further, on assuming that $\alpha(\bx)=0$ for all $\bx$ in the neighborhood of boundary $\partial\Omega$ and using the Green's theorem, the above expression simplifies to 
\begin{align}
\nonumber
T_4 \simeq & -\frac{1}{\K\epsilon_0 n}\sum_{\ell=1}^2\sum_{j=1}^n 
\Re e\left\{
\int_\Omega \nabla_\by\times\Im m\Big\{\mathbf{\Gamma}^0(\by,\bz)\Big\} \mathbf{M}_S^\mu \mathbf{H}_0^{0,j,\ell}(\bz)\cdot\alpha(\by)\nabla
\times\overline{\mathbf{H}^{0,j,\ell}_0(\by)}d\by
\right\}.
\end{align}
After straight forward calculations and the use of approximation \eqref{theta1}
\begin{align}
\nonumber
T_4\simeq \frac{4\pi}{\K^2\epsilon_0^2}
\Re e\left\{ \int_\Omega \alpha(\by)\mathcal{Q}_\alpha[\mathbf{M}_S^\mu](\by,\bz)d\by 
\right\},
\end{align}
where for any $3\times 3$ real matrix $\mathbf{A}$, the  real valued function $\mathcal{Q}_\alpha$ is defined by  
$$
\mathcal{Q}_\alpha[\mathbf{A}](\by,\bz):=\nabla_\by\times\Im m\big\{\mathbf{\Gamma}^0(\by,\bz)\big\}\mathbf{A}:\nabla_\by\times\Im m\big\{\mathbf{\Gamma}^0(\by,\bz)\big\}.
$$
Consequently, the covariance of the speckle field turns out to be
\begin{eqnarray}
{\rm Cov}\Big(\partial_T\mathcal{H}_f(\bz), \partial_T\mathcal{H}_f(\bz')\Big)\simeq \frac{16\pi^2a_\mu^2}{\epsilon_0^4}\iint_{\Omega\times\Omega} C_\alpha(\by,\by')\mathcal{Q}_\alpha[\mathbf{M}_S^\mu](\by,\bz)\mathcal{Q}_\alpha[\mathbf{M}_S^\mu](\by',\bz')d\by d\by'. 
\end{eqnarray}
Moreover, since $\mathcal{Q}_\alpha[\mathbf{I}_3](\by,\bz)=\big\|\nabla_\by\times\Im m\big\{\mathbf{\Gamma}^0(\by,\bz)\big\}\big\|^2$, for a spherical inclusion
\begin{eqnarray}
{\rm Cov}\Big(\partial_T \mathcal{H}_f(\bz), \partial_T \mathcal{H}_f(\bz')\Big)
=b_\mu\iint_{\Omega\times\Omega} {C}_\alpha(\by,\by')\mathcal{Q}_\alpha[\mathbf{I}_3](\by,\bz)\mathcal{Q}_\alpha[\mathbf{I}_3](\by',\bz')d\by d\by',\label{Cov3}
\end{eqnarray}
where $C_\alpha(\by,\by'):=\mathbb{E}[\alpha(\by)\alpha(\by')]$ is the two point correlation of fluctuation $\alpha$. The expression \eqref{Cov3} is very similar to that studied in \eqref{Cov4}. As already pointed out in Section \ref{Sec.6.1.1}, the speckle field is indeed the medium noise smoothed with an integral kernel whose width is of the order of wavelength.

\subsubsection{Speckle field analysis for dielectric inclusions}

In this case, the speckle field generated by   $\partial_T\mathcal{H}_f$  at $\bz\in\Omega$ is given by 
\begin{align*}
T_5 :=& \frac{1}{n}\sum_{\ell=1}^2\sum_{j=1}^n 
\Re e\Big\{
\nabla\times\mathbf{U}^{{\rm noise},j,\ell}(\bz)\cdot  \mathbf{M}_S^\epsilon\nabla\times\mathbf{H}_0^{0,j,\ell}(\bz)
\Big\},
\\
= &
-\frac{1}{\K\epsilon_0 n}\sum_{\ell=1}^2\sum_{j=1}^n 
\Re e\left\{
\int_\Omega \nabla_\bz\times\Im m\Big\{\mathbf{\Gamma}^0(\by,\bz)\Big\}\nabla\times\alpha(\by)\nabla
\times\overline{\mathbf{H}^{0,j,\ell}_0(\by)}\cdot  \mathbf{M}_S^\epsilon\nabla\times \mathbf{H}_0^{0,j,\ell}(\bz)d\by
\right\},
\\
= &
-\frac{1}{\K \epsilon_0 n}\sum_{\ell=1}^2\sum_{j=1}^n 
\Re e\left\{
\int_\Omega\nabla_\by\times\Im m\Big\{\mathbf{\Gamma}^0(\by,\bz)\Big\}\left(\mathbf{M}_S^\epsilon\nabla\times \mathbf{H}_0^{0,j,\ell}(\bz)\right)\cdot \nabla\times\alpha(\by)\nabla
\times\overline{\mathbf{H}^{0,j,\ell}_0(\by)}
d\by
\right\}.
\end{align*}
Letting $\alpha$ to be zero near $\partial\Omega$ and using Green's theorem, we simplify the above expression to
\begin{align*}
T_5 \simeq &
-\frac{1}{\K \epsilon_0 n}\sum_{\ell=1}^2\sum_{j=1}^n 
\Re e\left\{
\int_\Omega \nabla_\by\times \nabla_\by\times\Im m\Big\{\mathbf{\Gamma}^0(\by,\bz)\Big\}\mathbf{M}_S^\epsilon \nabla\times \mathbf{H}_0^{0,j,\ell}(\bz)\cdot \alpha(\by)\nabla
\times\overline{\mathbf{H}^{0,j,\ell}_0(\by)} d\by \right\},
\\
= &
-\frac{\K}{\epsilon_0 n}\sum_{\ell=1}^2\sum_{j=1}^n 
\Re e\left\{
\int_\Omega \Im m\Big\{\mathbf{\Gamma}^0(\by,\bz)\Big\}\mathbf{M}_S^\epsilon\cdot \alpha(\by)\nabla \times\overline{\mathbf{H}^{0,j,\ell}_0(\by)}\left(\nabla\times \mathbf{H}_0^{0,j,\ell}(\bz)\right)^T d\by
\right\}.
\end{align*} 
Finally, invoking approximation \eqref{theta2}, we arrive at
\begin{align}
T_5 \simeq & \frac{4\pi\K^2}{\epsilon_0^2} 
\int_{\Omega}\gamma(\by)\widetilde{\mathcal{Q}}_\alpha
[\mathbf{M}^\epsilon_S](\by,\bz)d\by,
\end{align} 
where $\widetilde{\mathcal{Q}}_\alpha$ is a non-negative real valued function defined for any $3\times 3$ real matrix $\mathbf{A}$ by
$$
\widetilde{\mathcal{Q}}_\alpha[\mathbf{A}](\by,\bz)
=
\Im m\Big\{\mathbf{\Gamma}^0(\by,\bz)\Big\}\mathbf{M}_S^\epsilon: \Im m\Big\{\mathbf{\Gamma}^0(\by,\bz)\Big\}d\by.
$$
and as a consequence, 
\begin{align}
{\rm Cov}\Big(\partial_T&\mathcal{H}_f(\bz), \partial_T \mathcal{H}_f(\bz')\Big)
\simeq 
b_\epsilon^2\K^4\iint_{\Omega\times\Omega} C_\alpha(\by,\by')\widetilde{\mathcal{Q}}_\alpha[\mathbf{M}_S^\epsilon](\by,\bz)\widetilde{\mathcal{Q}}_\alpha[\mathbf{M}_S^\epsilon](\by',\bz').\label{Cov5}
\end{align}
The results for the a spherical dielectric inclusion are evident from the previous analysis.

\section{Conclusions}\label{conc}

In this paper, we investigated a topological derivative based electromagnetic inclusion detection algorithm using the measurements of the tangential components of scattered magnetic field, considering a full Maxwell equations setting. It is elucidated that the topological derivative based imaging functional behaves like the square of the imaginary part of a free space fundamental magnetic solution and attains its maximum at the true location of the inclusion with Rayleigh resolution limit. The detection algorithm is proved to be very stable with respect to measurement noise and moderately stable with respect to medium noise. Moreover, it is indicated that multiple incident waves significantly enhance the stability of the functional. Albeit, the case of a single inclusion is discussed herein, the results extend to the case of multiple inclusions with a common characteristic size.

\bigskip

\appendix

\section{Proof of Lemma \ref{HKI-lem2}}\label{Append.HK2}

We recall from \cite[Lemma 3.1]{CCH}, that for all constant vectors $\mathbf{p}, \mathbf{q}\in\RR^3$ and $\bx,\by\in\mathbf{B}(0,r)$
\begin{align}
&2i\epsilon_0\mathbf{p}\cdot\Im m\Big\{\mathbf{\Gamma}(\bx,\bz)\Big\}\mathbf{q}
\nonumber
\\
=&\int_{\partial\mathbf{B}}
\Big(
\overline{\mathbf{\Gamma}(\bx,\bz)}\mathbf{p}\cdot\nu(\bz)\times\nabla_z\times\mathbf{\Gamma}(\bz,\by)\mathbf{q}
-\nu(\bz)\times\nabla_z\times\overline{\mathbf{\Gamma}(\bx,\bz)}\mathbf{p}\cdot\mathbf{\Gamma}(\bz,\by)\mathbf{q}\Big)d\sigma(\bz),
\nonumber
\\
=&\int_{\partial\mathbf{B}}
\Big(
\left[\overline{\mathbf{\Gamma}(\bx,\bz)}\mathbf{p}\times\nu(\bz)\right]\cdot\left[\nabla_z\times\mathbf{\Gamma}(\bz,\by)\mathbf{q}\right]
-\left[\nabla_z\times\overline{\mathbf{\Gamma}(\bx,\bz)}\mathbf{p}\right]\cdot\left[\mathbf{\Gamma}(\bz,\by)\mathbf{q}\times \nu(\bz)\right]\Big)d\sigma(\bz).\label{e0}
\end{align} 
Moreover, in the far field where $r\to \infty $, we have
\begin{eqnarray}
\mathbf{\Gamma}(\bx,\by)\mathbf{p}&=&O(r^{-1}), \label{e1}
\\
\frac{\partial}{\partial x_j }\mathbf{\Gamma}(\bx,\by)\mathbf{p}&=&O(r^{-1}), \label{e2}
\\
\nabla_\bx\times\mathbf{\Gamma}(\bx,\by)\mathbf{p}+i\K\mathbf{\Gamma}(\bx,\by)\mathbf{p}\times\nu(\bx)&=&O(r^{-2}), \label{e3}
\\
\frac{\partial}{\partial x_j}\Big(\nabla_\bx\times\mathbf{\Gamma}(\bx,\by)\mathbf{p}+i\K\mathbf{\Gamma}(\bx,\by)\mathbf{p}\times\nu(\bx)\Big)&=&O(r^{-2}).\label{e4}
\end{eqnarray}

By virtue of the estimates \eqref{e1} and \eqref{e3}, the expression \eqref{e0} renders
\begin{equation}
\int_{\partial\mathbf{B}(0,r)}\Big(\overline{\mathbf{\Gamma}(\bx,\bz)}\times\nu(\bz)\Big)^T\Big(\mathbf{\Gamma}(\bz,\by)\times\nu(\bz)\Big)d\sigma(\bz) = -\frac{\epsilon_0}{\K}\Im m\big\{\mathbf{\Gamma}(\bx,\by)\big\}+O(r^{-1}).
\end{equation} 
The above relation also shows that  $|\widetilde{q}_{ij}(\bx,\by)|= O(r^{-1})$. The estimate for  $|\nabla_\bx\widetilde{q}_{ij}(\bx,\by)|$ can be proved analogously using \eqref{e3} and \eqref{e4}. This completes the proof. 

\section{Proof of Lemma \ref{lemUnoise}}\label{App.Proof5.1}
 
First of all note that, since $\etab$ is a mean-zero circular Gaussian random process, $\mathbf{U}^{\rm noise}(\bz)$ is also a mean-zero circular Gaussian random process thanks to linearity. Moreover, its covariance can be calculated for all $\bz,\bz'\in\Omega$ as 
\begin{align}
\mathbb{E}\Big[\mathbf{U}^{\rm noise}(\bz)
\overline{\mathbf{U}^{\rm noise}(\bz')}^T\Big]
:=&\frac{1}{\epsilon_0^2}\mathbb{E}\Big[\int_{\partial\Omega} \mathbf{\Gamma}(\bx,\bz)\nu(\bx)\times
\overline{\left(\frac{1}{2}\mathcal{I}-\mathcal{P}^\K\right)
\left[\etab\times\nu\right](\bx)} d\sigma(\bx)
\nonumber
\\
&\qquad
\left(
\int_{\partial\Omega}\overline{\mathbf{\Gamma}(\bx',\bz')}\nu(\bx')\times
\left(\frac{1}{2}\mathcal{I}-\mathcal{P}^\K\right)
\left[\etab\times\nu\right](\bx') d\sigma(\bx')\right)^T
\Big],
\nonumber
\\
=& \frac{1}{\epsilon_0^2}\sum_{p,q=1}^2\mathbb{E}_{pq}(\bz,\bz'),\label{Cov1}
\end{align}
where $\mathbb{E}_{pq}:= \mathbb{E}\left[\mathcal{J}_\alpha(\bz)
\overline{\mathcal{J}_\beta(\bz')}\right]$ for all $p,q\in\{1,2\}$ with 
\begin{align}
\mathcal{J}_1(\bz):=&
\frac{1}{2}\int_{\partial\Omega} \left(\mathbf{\Gamma}(\bx,\bz)\times\nu(\bx)\right)^T
\left(\overline{\etab(\bx)} \times\nu(\bx)\right)d\sigma(\bx),
\\
\mathcal{J}_2(\bz):=&\int_{\partial\Omega} \left(\mathbf{\Gamma}(\bx,\bz)\times\nu(\bx)\right)^T
\overline{\mathcal{P}^\K\left[\etab\times\nu\right](\bx)}d\sigma(\bx).
\end{align}

Let us now analyze each term individually. Note that 
\begin{align}
\mathbb{E}_{11}(\bz,\bz') 
=& \frac{1}{4}\iint_{\partial\Omega\times \partial\Omega} \left[\left(\mathbf{\Gamma}(\bx,\bz)\times\nu(\bx)\right)\times\nu(\bx)\right]^T
\mathbb{E}\left[\overline{\etab(\bx)}\left(\etab(\bx')
\right)^T\right]
\nonumber
\\
&\qquad
\overline{\left(\mathbf{\Gamma}(\bx',\bz')\times\nu(\bx')\right)\times\nu(\bx')}
 d\sigma(\bx)d\sigma(\bx'),
\nonumber
\\
=&
\frac{\sigma_{\rm noise}^2}{4}\int_{\partial\Omega} \left[\mathbf{\Gamma}(\bx,\bz)\right]^T
\overline{\mathbf{\Gamma}(\bx,\bz')} d\sigma(\bx),
\end{align}
where in order to obtain the latter identity, expression \eqref{EE} has been invoked. Assuming, $\bz,\bz'\in\Omega$ far from $\partial \Omega$ and utilizing the Helmholtz-Kirchhoff identities, we obtain 
\begin{align}
\mathbb{E}_{11}(\bz,\bz')
\simeq &
-{\epsilon_0\sigma^2_{\rm noise}}(4\K)^{-1}\Im m\big\{\mathbf{\Gamma}(\bz,\bz')\big\}.\label{EE_1}
\end{align}

Now, remark that  
$$
\left(\nabla\times\left(\phi\mathbf{I}_3\right)\right)^T \mathbf{p}= -\nabla\times\left(\phi\mathbf{p}\right),
$$ 
for any constant vector $\mathbf{p}$ and any smooth function $\phi$. Therefore, 
\begin{align*}
\nabla_\bx\times\left(g(\by,\bx)\etab(\by)\times\nu(\by)\right)
=&
-\left[\nabla_\bx\times\left(g(\by,\bx)\mathbf{I}_3 \right)\right]^T\left(\etab(\by)\times\nu(\by)\right),
\\
=& -{\epsilon_0^{-1}}\left[\nabla_\by\times\mathbf{\Gamma}(\by,\bx)\right]^T\left(\etab(\by)\times\nu(\by)\right),
\\
=&\phantom{-}\,{\epsilon_0^{-1}}\left[\nabla_\by\times\mathbf{\Gamma}(\by,\bx)\times\nu(\by)\right]^T \etab(\by).
\end{align*}
Consequently, for $\by\in\Omega$ far from boundary $\partial\Omega$
\begin{align}
\mathcal{P}^\K\left[\etab\times\nu\right](\bx)=\frac{i\K}{\epsilon_0}\int_{\partial\Omega}
\mathbf{\Gamma}(\by,\bx)\etab(\by) d\sigma(\by)\times\nu(\bx).\label{Pexpanded}
\end{align}

By virtue of \eqref{Pexpanded}, we have
\begin{align}
\mathbb{E}_{12}(\bz,\bz')=& 
-\frac{i\K}{2\epsilon_0}\iiint_{(\partial\Omega)^3} \left[\left(\mathbf{\Gamma}(\bx,\bz)\times\nu(\bx)\right)\times\nu(\bx)\right]^T
\mathbb{E}\left[\overline{\etab(\bx)}\left(\etab(\by)\right)^T\right]
\nonumber
\\
&\qquad
\mathbf{\Gamma}(\by,\bx')\overline{\left[\mathbf{\Gamma}(\bx',\bz')\times\nu(\bx')\right]\times\nu(\bx')}
 d\sigma(\by)d\sigma(\bx)d\sigma(\bx'),
 \nonumber
\\
=&-\frac{i\K\sigma_{\rm noise}^2}{2\epsilon_0}\iint_{(\partial\Omega)^2} \mathbf{\Gamma}(\by,\bz)
\left[\mathbf{\Gamma}(\by,\bx')\right]^T \overline{\mathbf{\Gamma}(\bx',\bz') }d\sigma(\bx')d\sigma(\by).
\end{align}
Invoking Helmholtz-Kirchhoff identity, we get
\begin{eqnarray}
\mathbb{E}_{12}(\bz,\bz')\simeq  \frac{\sigma_{\rm noise}^2}{2\K^2}\int_{\partial\Omega}\Big\{i\K\mathbf{\Gamma}(\by,\bz)\Big\}\Im m\Big\{\K\mathbf{\Gamma}(\by,\bz')\Big\}d\sigma(\by).\label{EE_2}
\end{eqnarray}

Similarly, third term $\mathbb{E}_3(\bz,\bz')$ can be evaluated and appears to be 
\begin{eqnarray}
\mathbb{E}_{21}(\bz,\bz')\simeq \frac{\sigma_{\rm noise}^2}{2\K^2}\int_{\partial\Omega}\Im m\Big\{\K\mathbf{\Gamma}(\by,\bz)\Big\}\overline{\Big\{i\K\mathbf{\Gamma}(\by,\bz')\Big\}}d\sigma(\by).\label{EE_3}
\end{eqnarray}

In order to explicitly calculate $\mathbb{E}_4(\bz,\bz')$, we observe by invoking \eqref{Pexpanded} that 
\begin{align*}
\mathbb{E}\Big[\mathcal{P}^\K&\left[\etab\times\nu\right](\bx)\overline{\mathcal{P}^\K\left[\etab\times\nu\right](\bx')}^T\Big]
\\
=&
\frac{\K^2}{\epsilon_0^2}\mathbb{E}\Bigg[
\iint_{(\partial\Omega)^2}
\mathbf{\Gamma}(\by,\bx) \etab(\by)\times\nu(\bx)
\left[\overline{
\mathbf{\Gamma}(\by',\bx')\etab(\by')\times\nu(\bx')
}\right]^Td\sigma(\by)d\sigma(\by')
\Bigg],
\\
=&
\frac{\K^2}{\epsilon_0^2}\iint_{(\partial\Omega)^2}
\left[\mathbf{\Gamma}(\by,\bx)\times\nu(\bx)\right]^T \mathbb{E}\left[\etab(\by)\overline{\etab(\by')}^T\right]
\overline{\mathbf{\Gamma}(\by',\bx')\times\nu(\bx')}
d\sigma(\by)d\sigma(\by'),
\\
\simeq &
\frac{\sigma_{\rm noise}^2\K^2}{\epsilon_0^2} 
\int_{\partial\Omega}
\left[\mathbf{\Gamma}(\by,\bx)\times\nu(\bx)\right]^T 
\overline{
\left[
\mathbf{\Gamma}(\by,\bx')\times\nu(\bx')\right]
}d\sigma(\by).
\end{align*}

Therefore, 
\begin{align}
\mathbb{E}_{22}(\bz,\bz')=&\iint_{(\partial\Omega)^2} \left(\mathbf{\Gamma}(\bx,\bz)\times\nu(\bx)\right)^T
\mathbb{E}\left[\overline{\mathcal{P}^\K\left[\etab\times\nu\right](\bx)}\left(\mathcal{P}^\K\left[\etab\times\nu\right](\bx')\right)^T\right]
\nonumber
\\
&\qquad\qquad
\overline{\mathbf{\Gamma}(\bx',\bz')\times\nu(\bx')} d\sigma(\bx)d\sigma(\bx'),
\nonumber
\\
\nonumber
\simeq&\frac{\sigma_{\rm noise}^2\K^2}{\epsilon_0^2}\iiint_{(\partial\Omega)^3} \left(\mathbf{\Gamma}(\bx,\bz)\times\nu(\bx)\right)^T
\overline{\left[\mathbf{\Gamma}(\by,\bx)\times\nu(\bx)\right]^T}
\\
\nonumber
&\qquad\qquad
\left[
\mathbf{\Gamma}(\by,\bx')\times\nu(\bx')\right]
\overline{\mathbf{\Gamma}(\bx',\bz')\times\nu(\bx')} d\sigma(\bx)d\sigma(\bx')d\sigma(\by),
\\
\simeq&\frac{\sigma_{\rm noise}^2}{\K^2}\int_{\partial\Omega} \Im m\Big\{\K\mathbf{\Gamma}(\bz,\by)\Big\}\Im m\Big\{\K\mathbf{\Gamma}(\by,\bz')\Big\} d\sigma(\by). \label{EE_4}
\end{align}

Adding all the contributions $\mathbb{E}_{pq}$ (for $p,q\in\{1,2\}$), we obtain the covariance of $\mathbf{U}^{\rm noise}$ 
\begin{align*}
\mathbb{E}\Big[\mathbf{U}^{\rm nosise}(\bz)
\overline{\mathbf{U}^{\rm nosise}(\bz')}^T\Big]
=&
-\frac{\sigma^2_{\rm noise}}{4\K\epsilon_0}\Im m\Big\{\mathbf{\Gamma}(\bz,\bz')\Big\}
\\
&+\frac{\sigma_{\rm noise}^2}{2\K^2\epsilon_0^2}\int_{\partial\Omega}\Big\{i\K\mathbf{\Gamma}(\by,\bz)\Big\}\Im m\Big\{\K\mathbf{\Gamma}(\by,\bz')\Big\}d\sigma(\by)
\\
&+\frac{\sigma_{\rm noise}^2}{2\K^2\epsilon_0^2}\int_{\partial\Omega}\Im m\Big\{\K\mathbf{\Gamma}(\by,\bz)\Big\}\overline{\Big\{i\K\mathbf{\Gamma}(\by,\bz')\Big\}}d\sigma(\by)
\\
&+
\frac{\sigma_{\rm noise}^2}{\K^2\epsilon_0^2}\int_{\partial\Omega} \Im m\Big\{\K\mathbf{\Gamma}(\bz,\by)\Big\}\Im m\Big\{\K\mathbf{\Gamma}(\by,\bz')\Big\} d\sigma(\by).
\end{align*} 
Finally,  the result follows by the fact that for any complex number $Z$
$$
2 \Im m\{iZ\}\Im m\{iZ\}-Z\Im m\{iZ\}-\Im m\{Z\}\overline{Z}=0.
$$

\bibliographystyle{plain}

\end{document}